\documentclass{article}
\usepackage{amsrefs}
\usepackage{amssymb}
\usepackage{amsmath}
\usepackage{amsthm}
\usepackage{graphicx}

\DeclareMathOperator{\Pic}{Pic}

\DeclareMathOperator{\rk}{rank}
\DeclareMathOperator{\codim}{codim}
\DeclareMathOperator{\im}{im}

\DeclareMathOperator{\Fitt}{Fitt}

\DeclareMathOperator{\Sm}{Sm}
\DeclareMathOperator{\Sing}{Sing}
\DeclareMathOperator{\Supp}{Supp}
\newtheorem{theorem}{Theorem}
\newtheorem*{theoremA}{Theorem A}
\newtheorem*{theoremB}{Theorem B}
\newtheorem*{theoremC}{Theorem C}
\newtheorem{proposition}{Proposition}
\newtheorem{lemma}{Lemma}
\newtheorem{definition}{Definition}
\newtheorem{corollary}{Corollary}
\newtheorem*{subject}{2000 Mathematics Subject Classification}
\newtheorem*{keywords}{Keywords}
\theoremstyle{definition}
\newtheorem{example}{Example}

\theoremstyle{remark}
\newtheorem{notation}{Notation}
\newtheorem{remark}{Remark}

\author{Marc Coppens\footnote{KU  Leuven, Department of Mathematics, Section of Algebra,
Celestijnenlaan 200B bus 2400 B-3001 Leuven and Technologiecampus Geel, department Elektrotechniek (ESAT),Kleinhoefstraat 4, B-2440 Geel Belgium; email: marc.coppens@kuleuven.be.}}
\title{Smoothness results for the schemes of special divisors on general $k$-gonal curves}
\date{}

\begin{document}
\maketitle \noindent

\begin{abstract}
For a general $k$-gonal curve $C$ with a morphism $f: C \rightarrow \mathbb{P}^1$ of degree $k$, we consider the refinement of the Brill-Noether schemes $W^r_d(C)$ by means of the Brill-Noether degeneracy schemes $\overline{\Sigma}_{\overrightarrow {e}}(C,f)$.
The schemes  $\overline{\Sigma}_{\overrightarrow {e}}(C,f)$ as sets are closures of subsets  $\Sigma_{\overrightarrow {e}}(C,f)$ of $\Pic (C)$ and as a scheme  $\Sigma_{\overrightarrow {e}}(C,f)$ is a smooth open subscheme of  $\overline{\Sigma}_{\overrightarrow {e}}(C,f)$.
In this paper we describe naturally defined open subsets of  $\overline{\Sigma}_{\overrightarrow {e}}(C,f)$ in general strictly containing  $\Sigma_{\overrightarrow {e}}(C,f)$  such that  $\overline{\Sigma}_{\overrightarrow {e}}(C,f)$ is smooth along them.

As an application we describe all invertible sheaves $L$ on $C$ having an injective Petri map.
Some of those sets  $\overline{\Sigma}_{\overrightarrow {e}}(C,f)$ are the irreducible components of $W^r_d(C)$.
In those cases we prove $W^r_d(C)$ is smooth at a point $L$ of those larger open subsets of  $\overline{\Sigma}_{\overrightarrow {e}}(C,f)$ unless $L$ belongs to at least two irreducible components of $W^r_d(C)$ (such points exist).
On the other hand in general the singular locus of the schemes $W^r_d(C)$ is not equal to the complement of the union of $W^{r+1}_d(C)$ and the intersections of two different components of $W^r_d(C)$.
\end{abstract}

\begin{subject}
14H51
\end{subject}

\begin{keywords}
Hurwitz-Brill-Noether theory, splitting degeneracy loci, special divisors, Brill-Noether schemes, gonality, smoothness, Petri map
\end{keywords}

\section{Introduction}\label{section1}

Let $C$ be a smooth irreducible projective curve of genus $g$ defined over an algebraically closed field $F$ (of arbitrary characteristic).
Let $L$ be a line bundle on $C$ and let $K$ be the canonical bundle on $C$.
The natural multiplication map
\[
\mu _L : H^0(C,L) \otimes H^0(C,K-L) \rightarrow H^0(C,K)
\]
is called the Petri map of $L$.
In case $h^0(L)=r+1$ with $r \in \mathbb{Z}_{\geq 0}$, $\deg (L)=d$ and $h^0(K-L) \geq 1$ then $L$ belongs to the Brill-Noether subscheme $W^r_d(C)$ of $\Pic ^d(C)$ and there is a natural identification between the tangent space $T_L(W^r_d(C))$ of $W^r_d(C)$ at $L$ and $(\im (\mu _L))^{\perp}$ being the orthogonal complement of $\im \mu_L$ with respect to the Serre duality paring
\[
< , >: H^0(C,K) \times H^1(C,\mathcal{O}_C) \rightarrow F
\]
(see \cite{ref9}, Chapter IV, Proposition (4.2)(i); although in this book one assumes $F=\mathbb{C}$ those arguments work for any choice of $F$).
In this statement the scheme structure of $W^r_d(C)$ is defined as in Definition \ref{schemespecialdivisors}.
In case $C$ is a general curve (meaning belonging to a suited non-empty open subset of the irreducible moduli space $M_g$ of curves of genus $g$) then the so-called Gieseker-Petri theorem states that $\mu_L$ is injective for all line bundles $L$ on $C$.
The first proof of this theorem occurs in \cite{GIES}.
This theorem of Petri-Gieseker is equivalent to the statements
\[
\dim (W^r_d(C))=\rho ^r_d(g)=g-(r+1)(g-d+r)
\]
(and $W^r_d(C)=\emptyset$ if $\rho ^r_d(g)<0$) and
\[
\Sing (W^r_d(C))=W^{r+1}_d(C) \text { .}
\]
The first statement is the so-called Brill-Noether theorem (with its first proof in \cite {GHBNT}), the second statement is with respect to the scheme structure on $W^r_d(C)$ mentioned before.

The gonality of a smooth curve $C$ is the smallest integer $k$ such that there exists a morphism $f:C \rightarrow \mathbb{P}^1$ of degree $k$.
Equivalently, it is the smallest integer $k$ such that $W^1_k(C) \neq \emptyset$.
For a general curve $C$ of genus $g$ the Brill-Noether theorem implies the gonality is equal to $[\frac{g+3}{2}]$ (meaning the largest integer being at most $\frac{g+3}{2}$).
In this paper we consider $k$-gonal curves $C$ of genus $g$ with $k<[\frac{g+3}{2}]$.
From \cite{ref15} it follows the sublocus of $M_g$ of $k$-gonal curves is irreducible; by a general $k$-gonal curve we mean a curve contained in a suited non-empty open subset of that locus.
Let $M$ be the line bundle of degree $k$ on C with $h^0(M)=2$.
Since $k<[\frac{g+3}{2}]$ it follows the Petri map of $M$ is not injective.
It is proved in \cite{AC} that $M$ is unique for a general $k$-gonal curve.
With respect to the injectivity of the Petri map, we obtain the following result for a general $k$-gonal curve.
\begin{theoremA}
Let $C$ be a general $k$-gonal curve of genus $g$ with $k<[\frac{g+3}{2}]$ and let $L$ be a line bundle on $C$ with $h^0(L)\cdot h^0(K-L) \neq 0$.
Then the Petri map $\mu_L$ of $L$ is injective if and only if $h^0(L-M)\cdot h^0(K-L-M)=0$.
\end{theoremA}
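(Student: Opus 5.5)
We have to prove both directions. One direction is elementary. Suppose $h^0(L-M)\neq 0$ and $h^0(K-L-M)\neq 0$. Choose a basis $s_1,s_2$ of $H^0(C,M)$, a nonzero $t\in H^0(C,L-M)$ and a nonzero $u\in H^0(C,K-L-M)$. The element
\[
(s_1t)\otimes(s_2u)-(s_2t)\otimes(s_1u)\in H^0(C,L)\otimes H^0(C,K-L)
\]
is mapped to $0$ by $\mu_L$. It is nonzero because $s_1t,s_2t$ are linearly independent in $H^0(L)$ and $s_1u,s_2u$ are linearly independent in $H^0(K-L)$. This is the classical base-point-free pencil trick: $\ker\mu_L$ contains the image of $H^0(L-M)\otimes H^0(K-L-M)$, which is nonzero. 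Hence $\mu_L$ is not injective.

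The converse is the real content. I would use the splitting-type description of $L$ with respect to $f$. Write
\[
f_*L\cong\bigoplus_{i=1}^k\mathcal{O}_{\mathbb{P}^1}(e_i).
\]
Since $f_*\mathcal{O}_C(M)$ relates to twisting by $\mathcal{O}_{\mathbb{P}^1}(1)$, we have $h^0(L-M)=\sum_i h^0(\mathcal{O}(e_i-1))$. By duality, $f_*(K-L)\cong\bigoplus\mathcal{O}(-e_i-2)\otimes\ldots$, which gives $h^0(K-L-M)=\sum_i h^1(\mathcal{O}(e_i+1))$ up to the standard shift. The condition $h^0(L-M)\cdot h^0(K-L-M)=0$ therefore becomes a condition on the splitting type: either all $e_i\le 0$, or all $e_i\ge -1$. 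In other words, the splitting type has "no gaps" that would contribute to both sides. The Petri map then decomposes via the isomorphism $H^0(C,L)=\bigoplus H^0(\mathcal{O}(e_i))$ and pushforward of multiplication into $f_*K$. Its kernel is governed by the map
\[
H^0(\mathbb{P}^1,\mathcal{E}nd(f_*L))\to\ldots,
\]
namely by $h^1$ of $\mathcal{E}nd(f_*L)$-type terms. For a general $k$-gonal curve, I would invoke the results of the Hurwitz–Brill–Noether theory (Larson, Cook-Powell–Jensen), assumed available as background: the loci $\Sigma_{\vec e}(C,f)$ are smooth of the expected dimension $g-u(\vec e)$, where $u(\vec e)=\sum_{i,j}\max(0,e_i-e_j-1)$. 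Then $\dim\ker\mu_L$ is computed as the difference between the tangent space dimension from $W^r_d$ and the tangent space to $\Sigma_{\vec e}$.

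Concretely, I would aim to show the following. For general $(C,f)$ and any $L$ with splitting type $\vec e$, the kernel of $\mu_L$ is isomorphic to the image of $H^0(L-M)\otimes H^0(K-L-M)$ under the pencil trick map. Equivalently,
\[
\dim\ker\mu_L=\sum_{i,j}h^0(\mathcal{O}(e_i-1))\,h^0(\mathcal{O}(-e_j-3))\text{-type count},
\]
which vanishes exactly under the stated condition. The main obstacle is this identification. I would prove it by a degeneration or infinitesimal argument: the Petri map restricted to the pullback decomposition factors through $\bigoplus_{i,j}H^0(\mathcal{O}(e_i))\otimes H^0(\mathcal{O}(-e_j-2)\otimes f_*\omega)$, and on each summand multiplication on $\mathbb{P}^1$ is injective modulo the $M$-pencil kernel. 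The nontrivial part is that the general-curve hypothesis makes the Tschirnhausen bundle $f_*\mathcal{O}_C$ balanced enough that no further kernel appears. I would first try to deduce this from the smoothness and dimension of $\overline{\Sigma}_{\vec e}$ at points of $\Sigma_{\vec e}$, comparing $\dim T_L W^r_d$ with the expected count.
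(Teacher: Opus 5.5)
Your necessity argument is correct and is the same as the paper's. The sufficiency direction, however, has a genuine gap.

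The statement you aim for is true: for general $(C,f)$, $\ker\mu_L$ is exactly the pencil-trick image of $H^0(L-M)\otimes H^0(K-L-M)$. But you give no argument for it, and the mechanisms you suggest are not the right ones.
\begin{itemize}
\item Balancedness of the Tschirnhausen bundle $f_*\mathcal{O}_C$ is not what controls extra kernel for an arbitrary $L$.
\item ``Comparing $\dim T_LW^r_d(C)$ with the expected count'' amounts to proving $W^r_d(C)$ smooth of dimension $\rho^r_d(g)$ at $L$. That is exactly the hard content on which the paper spends Proposition \ref{prop1}, Corollary \ref{cor3.1} and the Cohen--Macaulay/unmixedness argument of Proposition \ref{prop3}.
\item $\dim\ker\mu_L$ is not the difference of $\dim T_LW^r_d(C)$ and $\dim T_L\Sigma_{\overrightarrow{e}}(C,f)$. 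For $h^0(L)=r+1$ one has $\dim T_LW^r_d(C)=\rho^r_d(g)+\dim\ker\mu_L$, which is unrelated to $g-u(\overrightarrow{e})$.
\item Since $h^0(K-L-M)=\sum_i h^0(\mathcal{O}_{\mathbb{P}^1}(-e_i-3))$, the condition reads ``all $e_i\le 0$ or all $e_i\ge -2$'', not $\ge -1$.
\end{itemize}

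The missing idea is a factorization of the Petri map. Let $E=f_*L\cong\bigoplus\mathcal{O}(e_i)$; relative duality gives $f_*(K-L)\cong E^\vee\otimes\omega_{\mathbb{P}^1}$. Then
\[
\mu_L:\;H^0(E)\otimes H^0(E^\vee\otimes\omega_{\mathbb{P}^1})\xrightarrow{\ \alpha\ }H^0(\mathcal{E}nd(E)\otimes\omega_{\mathbb{P}^1})\xrightarrow{\ \beta\ }H^0(C,K),
\]
where $\alpha$ is multiplication on $\mathbb{P}^1$ and $\beta$ is induced by the dual of the module structure $f_*\mathcal{O}_C\to\mathcal{E}nd(E)$.

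First, $\ker\alpha$ is computed on $\mathbb{P}^1$. It is the direct sum over $(i,j)$ of the kernels of $H^0(\mathcal{O}(e_i))\otimes H^0(\mathcal{O}(-e_j-2))\to H^0(\mathcal{O}(e_i-e_j-2))$. So it has dimension $\sum_{i,j}h^0(\mathcal{O}(e_i-1))\,h^0(\mathcal{O}(-e_j-3))=h^0(L-M)\,h^0(K-L-M)$ and is spanned by pencil-trick tensors.

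Second, $\beta$ is injective, and this already follows from Proposition \ref{prop1.1}:
\begin{itemize}
\item Every $\mu_{L+nM}$ factors through the same $\beta$, because twisting by $\mathcal{O}(n)$ changes neither $\mathcal{E}nd(E)$ nor the module structure.
\item Take $\overrightarrow{e}$ to be the splitting type of $L$ itself. The scheme $\overline{\Sigma}_{\overrightarrow{e}}(C,f)$ is the intersection of the $W^{r(n)}_{d+nk}(C)$, and $h^0(L+nM)=r(n)+1$ for all $n$. Hence $T_L\overline{\Sigma}_{\overrightarrow{e}}(C,f)=\bigcap_n(\im\mu_{L+nM})^\perp$.
\item By parts (1) and (5) of Proposition \ref{prop1.1}, this tangent space has dimension $g-u(\overrightarrow{e})$. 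Hence $\dim\im\beta\ge u(\overrightarrow{e})=h^1(\mathcal{E}nd(E))=h^0(\mathcal{E}nd(E)\otimes\omega_{\mathbb{P}^1})$, the dimension of the source of $\beta$.
\end{itemize}

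So $\ker\mu_L=\ker\alpha$. This gives Theorem A, and even $\dim\ker\mu_L=h^0(L-M)\,h^0(K-L-M)$. As a check, in Example \ref{lastEx} this gives $3\cdot 5=15=\dim Z-\rho^4_{45}(50)$.

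Completed this way, your route genuinely differs from the paper's. The paper treats only $h^0(L-M)=0$. It places $L$ in a type I component $\overline{\Sigma}_{\overrightarrow{e'}}(C,f)$ and must prove smoothness at points of $\overset{\circ}{\Sigma}_{\overrightarrow{e'}[0]}(C,f)$ that lie outside $\Sigma_{\overrightarrow{e'}}(C,f)$; this is why it needs Proposition \ref{prop1}. It then converts smoothness in dimension $\rho^r_d(g)$ into Petri injectivity, and handles the other case via $L\leftrightarrow K-L$. The $\beta$-argument works on $L$'s own splitting stratum, where smoothness is already known, so it bypasses Proposition \ref{prop1} entirely.

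None of this appears in your proposal, so as written the sufficiency direction is unproved.
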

It is very easy to see that the condition is necessary, the statement is that the condition is also sufficient in case $C$ is a general $k$-gonal curve.
Line bundles satisfying $h^0(L)\cdot h^0(K-L) \neq 0$ are called special line bundles, because of the Riemann-Roch theorem they are the only interesting one in the context of this paper.
Let $L \in W^r_d(C) \setminus W^{r+1}_d(C)$ be a special line bundle having an injective Petri map, then locally at $L$ the dimension of $W^r_d(C)$ is equal to $\rho ^r_d(g)$ (it also implies $\rho ^r_d(g) \geq 0$).
In particular $W^r_d(C)$ has at least one component $Z$ of dimension $\rho^r_d(g)$ if some $L\in W^r_d(C) \setminus W^{r+1}_d(C)$ has an injective Petri map.
It is easy to see that the condition $h^0(L-M)=0$ implies $r \leq k-1$ (see e.g. \cite{ref2}, Lemma 3).
In \cite{CM} it is proved that in case $r\leq k-1$ and $\rho ^r_d(g) \geq 0$ then a general $k$-gonal curve $C$ of genus $g$ has an irreducible component $Z$ of $W^r_d(C)$ of dimension $\rho^r_d(g)$.
In \cite{ref6} it is proved that this irreducible component is unique under the condition $h^0(L-M)=0$ if $\rho ^r_d(g)>0$.
In case $\rho^r_d(g)=0$ then it is proved in \cite {ref7} that there are finitely many line bundles $L \in W^r_d(C)$ with $h^0(L-M)=0$ and they are isolated points of $W^r_d(C)$.
In \cite {ref7} those irreducible components of $W^r_d(C)$ are called of type I (see also Remark \ref{types}).

In case $C$ is a general $k$-gonal curve, using Theorem A we can determine which points $L \in Z$ ($Z$ being a component of type I) do belong to $\Sing (W^r_d(C))$.
Of course $Z \cap W^{r+1}_d(C) \subset \Sing (W^r_d(C))$ but in general this inclusion is strict.
In many cases, if $L \in Z \cap \Sing (W^r_d(C))$ and $L \notin W^{r+1}_d(C)$ then there exists a component $Z' \neq Z$ of $W^r_d(C)$ with $L \in Z'$, but we also find situations where such $Z'$ does not exist.
In particular the reduced space $Z$ is not always smooth outside $W^{r+1}_d(C)$.
For the other irreducible components of $W^r_d(C)$ we prove the following theorem (with the ingredient $V^r_{d,l}(C,f)$ explained below).

\begin{theoremB}
Let $C$ be a general $k$-gonal curve and let $L \in V^r_{d,l}(C,f)$.
Then the scheme $W^r_d(C)$ is smooth at $L$ if and only if $L$ is contained in a unique component of $W^r_d(C)$.
\end{theoremB}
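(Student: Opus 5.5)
We work in the setting of splitting types: for $L\in\Pic^d(C)$ the pushforward $f_*L$ splits as $\bigoplus_{i=1}^k \mathcal{O}_{\mathbb{P}^1}(e_i)$, which defines the splitting type $\overrightarrow{e}(L)$. We assume (as we expect the definition below will say) that $V^r_{d,l}(C,f)$ consists of line bundles $L\in W^r_d(C)\setminus W^{r+1}_d(C)$ of the form $L=lM+L'$. Here $h^0(L'-M)=0$ and $h^0(K-L-M)$ is controlled so that the components of $W^r_d(C)$ through $L$ are exactly the sets $\overline{\Sigma}_{\overrightarrow{e}}(C,f)$ for the maximal splitting types $\overrightarrow{e}$ with $h^0=r+1$ that specialize to $\overrightarrow{e}(L)$. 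We also assume $L$ lies in the larger open subsets of $\overline{\Sigma}_{\overrightarrow{e}}(C,f)$ along which that scheme is smooth (the main smoothness result of the paper).

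The direction ``smooth $\Rightarrow$ unique component'' is immediate. A scheme smooth at a point is locally irreducible there, so $L$ lies on only one irreducible component of $W^r_d(C)$.

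For the converse, suppose $L$ lies on a unique component $Z=\overline{\Sigma}_{\overrightarrow{e}}(C,f)$ of $W^r_d(C)$. We compare dimensions:
\[
\dim T_L W^r_d(C)=g-\rk \mu_L \quad\text{and}\quad \dim Z .
\]
\begin{enumerate}
\item Compute $\dim \ker\mu_L$ from the splitting type. Writing $H^0(L)=\bigoplus_i H^0(\mathcal{O}_{\mathbb{P}^1}(e_i))$ and $H^0(K-L)=\bigoplus_j H^0(\mathcal{O}_{\mathbb{P}^1}(-e_j-2)\otimes \omega)$ via relative duality for $f$, the Petri map decomposes into multiplication maps on $\mathbb{P}^1$. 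Its kernel then has dimension $\sum_{i,j}h^0(\mathcal{O}_{\mathbb{P}^1}(e_j-e_i-2))$, the familiar count $u(\overrightarrow{e})$, at least for general $k$-gonal $C$. This is the Larson / Pflueger-type computation, which we take to be used earlier in the paper for the Zariski tangent space of $\overline{\Sigma}_{\overrightarrow{e}}$.
\item Show that the tangent space to $W^r_d(C)$ at $L$ equals the tangent space to the single component $\overline{\Sigma}_{\overrightarrow{e}'}$, where $\overrightarrow{e}'$ is the unique maximal splitting type containing $\overrightarrow{e}(L)$ in its closure and having $h^0=r+1$. Concretely, we need $g-u(\overrightarrow{e}(L))=\dim\overline{\Sigma}_{\overrightarrow{e}'}=g-u(\overrightarrow{e}')$ whenever that maximal type is unique.
\item Combine this with the smoothness of $\overline{\Sigma}_{\overrightarrow{e}'}$ along the open set containing $L$. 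The scheme $W^r_d(C)$ contains $\overline{\Sigma}_{\overrightarrow{e}'}$ locally with equal tangent spaces, so $W^r_d(C)$ is smooth at $L$.
\end{enumerate}

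The main obstacle is step 2. It is a combinatorial statement about splitting types inside $V^r_{d,l}$: the deformations of $\overrightarrow{e}(L)$ that keep $h^0\ge r+1$ have to be exactly the ones toward a single maximal type. Moreover, the kernel of the Petri map must correspond precisely to first-order deformations preserving $h^0$, with no extra tangent directions.

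We would try to handle this by noting that $h^0(L)=\sum_i \max(e_i+1,0)$. Preserving it to first order only allows moving the entries $e_i$ with $e_i\ge -1$ and the entries with $e_i\le -2$ separately. In $V^r_{d,l}$ the shape $L=lM+L'$ forces the $e_i$ to occupy few consecutive values. We would then count $u$ directly and match the two dimensions. Any mismatch would signal precisely the case of two maximal types, and hence two components through $L$.
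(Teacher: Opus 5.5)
The direction ``smooth $\Rightarrow$ unique component'' is correct; it is the trivial implication (2)$\Rightarrow$(1) of Theorem \ref{theorem5.1}. The converse, however, rests on two claims that are false.

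\textbf{Step 1.} The claim $\dim\ker\mu_L=u(\overrightarrow{e}(L))$ contradicts Theorem A. A general $L$ on a type I component is free from $M$, so $\mu_L$ is injective, while $u(\overrightarrow{e}(L))=g-\rho^r_d(g)>0$. The decomposition over $\mathbb{P}^1$ only describes the multiplication map $\alpha\colon H^0(f_*L)\otimes H^0((f_*L)^\vee\otimes\omega_{\mathbb{P}^1})\to H^0(\mathrm{End}(f_*L)\otimes\omega_{\mathbb{P}^1})$, through which $\mu_L$ factors. Its blocks are nonzero only for $e_i\geq 0$, $e_j\leq -2$, and $\dim\ker\alpha=\sum e_i(-e_j-2)=h^0(L-M)\,h^0(K-L-M)$ (the pencil-trick kernel), not $u$.

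\textbf{Step 2.} The equality $g-u(\overrightarrow{e}(L))=g-u(\overrightarrow{e}')$ holds only on the open stratum $\Sigma_{\overrightarrow{e}'}(C,f)$. The theorem concerns all of $V^r_{d,l}(C,f)=\overset{\circ}{\Sigma}_{\overrightarrow{e}'[0]}(C,f)$: the bundles whose splitting type agrees with the generic type of $Z$ in all nonnegative entries and is arbitrary below. Your description $L=lM+L'$ is not the definition; in fact $l=\sum_{e_i\geq 0}e_i$. Example \ref{lastEx} refutes Step 2: points of $\Sigma_{\overrightarrow{e''}}(C,f)$ with $\overrightarrow{e''}=(-4,-4,-3,-2,1,2)$ are smooth points of $W^4_{45}(C)$, although $g-u(\overrightarrow{e''})=18<20=\dim Z$. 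What must equal $\dim Z$ is $g-\rk\mu_L$, and nothing in your plan computes it.

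Two ideas are missing, and they are exactly what carries the paper's proof.

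\textbf{Controlling $\im\mu_L$.} The paper writes $L=L'+eM$ with $L'$ on a component of type I or II, and gets smoothness at $L'$ from Theorem A and Proposition \ref{propE1}. It then climbs back with Proposition \ref{prop5.2}. When neither $-1$ nor $-2$ occurs in the splitting type, the base point free pencil trick gives $s\cdot H^0(K-L-M)+t\cdot H^0(K-L-M)=H^0(K-L)$. Hence $\im\mu_L\subset\im\mu_{L+M}$, and smoothness propagates. The only obstruction is an entry $-1$ in $\overrightarrow{e}(L)$, i.e.\ $h^0(L+M)>2r+2-l$.

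\textbf{Producing a second component.} This is where ``unique component'' is really used: such an entry must force a second component. Proposition \ref{prop5.3} raises that entry to $0$ and lowers the top entry by one; in Example \ref{lastEx} this gives $\overrightarrow{g}=(-4,-4,-4,0,1,1)$. The result is an irreducible locus in $W^r_d(C)$ through $L$ that is not contained in $Z$. Your sentence ``any mismatch would signal precisely the case of two maximal types'' is this step, asserted without proof.

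Your $\mathbb{P}^1$ idea can be repaired. The second factor $H^0(\mathrm{End}(f_*L)\otimes\omega_{\mathbb{P}^1})\to H^0(K)$ is injective for every $L$ on a general $k$-gonal curve; via the tangent space description of $\overline{\Sigma}_{\overrightarrow{e}}$, this is Proposition \ref{prop1.1}(5) at $L$. Hence $\rk\mu_L=\sum_{e_i\geq 0,\,e_j\leq -2}(e_i-e_j-1)$. On $V^r_{d,l}(C,f)$ this equals $u(\overrightarrow{e}')-\nu\, l$, with $\nu$ the number of entries equal to $-1$. That would give the smoothness half by a route different from the paper's induction, but the second-component construction would still be needed.
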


As a set $V^r_{d,l}(C,f)$ is a non-empty open (hence dense) subset of an irreducible component of $W^r_d(C)$, however their scheme structures have different definitions.
In the proof of Theorem B we identify the points $L \in V^r_{d,l}(C,f)$ such that the schemes $V^r_{d,l}(C,f)$ and $W^r_d(C)$ have the same tangent space at $L$.
In case this does not hold we show $L$ is contained in at least two components.
See Example \ref{lastEx} illustrating Theorem B.

In \cite{ref7} the author considers a finer stratification of $\Pic^d(C)$ using Brill-Noether splitting loci $\Sigma _{\overrightarrow {e}}(C,f)$ associated to a morphism $f : C \rightarrow \mathbb {P}^1$ of degree $k$ (see Definition 
\ref{def1.2}).
The author also introduces so-called Brill-Noether degeneracy loci $\overline{\Sigma}_{\overrightarrow {e}}(C,f)$ (see Definition \ref{def1.4}) and in case $C$ is a general $k$-gonal curve it is the closure of $\Sigma_{\overrightarrow {e}}(C,f)$.
Irreducible components of the schemes $W^r_d(C)$ for a general $k$-gonal curve $C$ are certain $\overline{\Sigma}_{\overrightarrow {e}}(C,f)$ with $\overrightarrow {e}$ of balanced plus balanced type (see Definition \ref{balplusbal}).
In \cite {ref6} a scheme structure is defined on $\overline {\Sigma}_{\overrightarrow {e}}(C,f)$ (see Definition \ref{schemedegloci}) and this scheme is smooth along $\Sigma _{\overrightarrow {e}}(C,f)$ (see \cite{ref2}, Proposition 3).

Now assume $\overline {\Sigma}_{\overrightarrow {e}}(C,f)$ is an irreducible component of some scheme $W^r_d(C)$ for a general $k$-gonal curve $C$.
It is proved in \cite {ref2} that the scheme $W^r_d(C)$ is smooth along $\Sigma_{\overrightarrow {e}}(C,f)$.
Recently, in that case, in \cite{FFR} one introduces a naturally defined larger open subset $V^r_{d,l}(C,f)$ of $\overline {\Sigma}_{\overrightarrow {e}}(C,f)$ containing $\Sigma _{\overrightarrow {e}}(C,f)$ (see also Definition \ref{defV}).
See also Remark \ref {remV} including the relation between the parameter $l$ and $\overrightarrow {e}$.
If it would be known that $\overline{\Sigma}_{\overrightarrow {e}}(C,f)$ is smooth along $V^r_{d,l}(C,f)$ then Theorem A would be an immediate corollary of it.
However, as explained in Remark \ref {remV}, the smoothness results on $V^r_{d,l}(C,f)$ obtained in \cite {FFR} for special types of $k$-gonal curves do not imply smoothness results on $V^r_{d,l}(C,f)$ for general $k$-gonal curves.
As a partial generalization of Theorem A we prove the following result.

\begin{theoremC}
Let $C$ be a general $k$-gonal curve.
Let $\overline{\Sigma}_{\overrightarrow {e}}(C,f)$ be an irreducible component of some scheme $W^r_d(C)$.
The scheme $\overline{\Sigma}_{\overrightarrow{e}}(C,f)$ is smooth along $V^r_{d,l}(C,f)$ (with $l$ computed from $\overrightarrow {e}$ as explained in Remark \ref{remV}).
\end{theoremC}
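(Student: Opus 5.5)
The plan is to reduce smoothness of the scheme $\overline{\Sigma}_{\overrightarrow{e}}(C,f)$ at a point $L\in V^r_{d,l}(C,f)$ to a tangent space computation. Then we compare that tangent space with the kernel of a Petri-type map, which for a general $k$-gonal curve we control by a degeneration argument, as in the proof of Theorem A. Recall that $\overline{\Sigma}_{\overrightarrow{e}}(C,f)$ is irreducible of dimension $g-u(\overrightarrow{e})$, where $u(\overrightarrow{e})=\sum_{i<j}\max(0,e_j-e_i-1)$ is the expected codimension. For $\overrightarrow{e}$ of balanced plus balanced type this dimension equals $\dim V^r_{d,l}(C,f)$.

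So it suffices to show $\dim T_L\overline{\Sigma}_{\overrightarrow{e}}(C,f)\le g-u(\overrightarrow{e})$ for every $L\in V^r_{d,l}(C,f)$. The scheme structure of Definition \ref{schemedegloci} is given by degeneracy conditions on the pushforwards $f_*(L\otimes M^{\otimes m})$, namely $h^0(L\otimes M^{-m})\ge$ the value prescribed by $\overrightarrow{e}$, for the relevant $m$. Hence its tangent space at $L$ is contained in the intersection over those $m$ of $T_L W^{r_m}_{d-mk}$ translated by $M^{m}$. That is, $T_L$ is the orthogonal complement of the span
\[
\sum_m \im\bigl(\mu_{L-mM}:H^0(L-mM)\otimes H^0(K-L+mM)\to H^0(K)\bigr),
\]
with only those $m$ where the condition is imposed with equality, as in the proof of smoothness along $\Sigma_{\overrightarrow{e}}(C,f)$ in \cite{ref2}, Proposition 3.

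The key step is to show that for $L\in V^r_{d,l}(C,f)$ this span has dimension at least $u(\overrightarrow{e})$. The defining open condition of $V^r_{d,l}$ (see Definition \ref{defV}) is that $h^0(L-lM)$, or dually $h^0(K-L-l'M)$, vanishes appropriately. This forces the relevant bundles $L-mM$ for the two extreme values of $m$ to satisfy the hypothesis $h^0(L'-M)\cdot h^0(K-L'-M)=0$ of Theorem A. Theorem A then gives injectivity of the corresponding Petri maps, and the images have the predicted dimensions. I would then show that the images for the two values of $m$ intersect in the expected way. Those images are the spaces $H^0(L-mM)\cdot H^0(K-L+mM)$, and one can factor through $H^0(M)$: the base-point-free pencil trick identifies the overlap with the image of $H^0(L-(m+1)M)\otimes H^0(K-L+mM)$. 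A dimension count then yields exactly $u(\overrightarrow{e})$.

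The main obstacle is this last intersection computation. Injectivity of the individual Petri maps (Theorem A) does not by itself control how the images for different twists overlap inside $H^0(K)$. I would first try to handle it using the base-point-free pencil trick together with the vanishing given by membership in $V^r_{d,l}$. If that fails, I would fall back on a direct proof by degeneration: put $C$ on a chain of elliptic curves or a $k$-gonal limit, as in the proof of Theorem A, and check there that the combined map from $\bigoplus_m H^0(L-mM)\otimes H^0(K-L+mM)$ has the expected rank. Upper semicontinuity of the tangent space dimension then finishes the argument for general $C$.
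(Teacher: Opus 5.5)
\textbf{There is a genuine gap at your key step.} Your reduction to the bound $\dim T_L\le g-u(\overrightarrow e)$ is fine. So is your description of $T_L$ as the annihilator of $\sum\im\mu_{L-mM}$ over the twists imposed with equality.

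The problem is the premise you feed into the dimension count.
\begin{itemize}
\item The set $V^r_{d,l}(C,f)=\overset{\circ}{\Sigma}_{\overrightarrow e[0]}(C,f)$ is defined by the equalities $h^0(L)=r+1$, $h^0(L-eM)=m_1+2m_2$, $h^0(L-(e+1)M)=m_2$. It fixes only the non-negative part of the splitting type of $L$ and leaves the negative part arbitrary, so nothing vanishes on the dual side.
\item Apart from the lowest twist $L-(e+1)M$, which is free from $M$, the twists in your sum lie outside the scope of Theorem A. Their Petri maps have non-zero kernels.
\item Concretely, in Example \ref{lastEx} every $L\in V^4_{45,3}(C,f)$ has $h^0(L-M)=3$, $h^0(L-2M)=1$, $h^0(K-L)=9$ and $h^0(K-L-M)\ge 5$. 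So neither $\mu_L$ nor $\mu_{L-M}$ is injective.
\item Moreover the pencil trick gives $H^0(L)=sH^0(L-M)+tH^0(L-M)$ and $H^0(K-L+2M)=sH^0(K-L+M)+tH^0(K-L+M)$. Hence $\im\mu_L$ and $\im\mu_{L-2M}$ lie in $\im\mu_{L-M}$.
\item What you must prove there is therefore exactly $\dim\ker\mu_{L-M}=9$ at \emph{every} point of $V$. That is, the kernel of a non-injective Petri map must be no larger than the part $\{su\otimes tv-tu\otimes sv\}$ forced by the pencil trick.
\end{itemize}
Theorem A says nothing about this, so your count has no input. Your argument does work for type I, where there is one relevant twist and $L$ is free from $M$. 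But there it is literally Theorem A, and the paper does not prove Theorem A by degeneration. It deduces it from Proposition \ref{prop3}, i.e.\ from the same machinery that yields Theorem C.

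The degeneration fallback does not close the gap either. You need the rank statement at all points of $V$ on the general curve, not at a general point. That requires controlling all limits of points of a non-proper family in the special fibre, and nothing of this is indicated.

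\textbf{The missing idea is to push smoothness up from deeper strata.} Suppose $L\in V$ has splitting type $\overrightarrow{e'}$.
\begin{itemize}
\item Then $\overrightarrow{e'}[0]=\overrightarrow e$, and $\overline{\Sigma}_{\overrightarrow{e'}}(C,f)$ is smooth at $L$ of dimension $g-u(\overrightarrow{e'})$ by Proposition \ref{prop1.1}(5).
\item Proposition \ref{prop1} and Corollary \ref{cor2.1} show that near $L$ this stratum is cut out of $\overline{\Sigma}_{\overrightarrow e}(C,f)$ by exactly $u(\overrightarrow{e'})-u(\overrightarrow e)$ equations. The extra Fitting ideals are realised as degeneracy loci of explicit bundle maps built with Lemma \ref{lemma1}, and Lemma \ref{lemma2} shows they have the expected codimension.
\item Hence $\dim T_L\overline{\Sigma}_{\overrightarrow e}(C,f)\le g-u(\overrightarrow{e'})+\bigl(u(\overrightarrow{e'})-u(\overrightarrow e)\bigr)$. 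This is Proposition \ref{prop2} and Corollary \ref{cor3.1} with $e=0$.
\end{itemize}
In your language, the right input is $\dim\sum_{n}\im\mu_{L+nM}=u(\overrightarrow{e'})$. What has to be shown is that the twists at which $\overrightarrow{e'}$ imposes more than $\overrightarrow e$ enlarge the span by at most $u(\overrightarrow{e'})-u(\overrightarrow e)$. Injectivity of individual Petri maps is not what is needed.
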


The proofs of the theorems make use of the smoothness results from \cite{ref7}.
The smoothness of the scheme $\overline{\Sigma}_{\overrightarrow {e}}(C,f)$ along $\Sigma _{\overrightarrow {e}}(C,f)$ in case $C$ is a general $k$-gonal curve follows from the injectivity of some Petri map for splitting loci.
However the scheme strucure of those Brill-Noether degeneracy loci, using multiple Fitting ideals, do not have the same definition as the scheme structure on the Brill-Noether schemes $W^r_d(C)$ (defined by exactly one Fitting ideal).

In order to use the smoothness result from \cite{ref7} we limit the number of Fitting ideals needed to define the scheme structure of the Brill-Noether splitting degeneracy loci locally at certain points of Brill-Noether splitting loci.
To do so we make a stepwise description of those Brill-Noether splitting loci as degeneracy loci of certain bundle maps on suited open subsets of the Brill-Noether splitting degeneracy loci.
This is a generalisation of Proposition 2 in \cite{ref2}.
It should be noted that in \cite{FFR} a proof of that proposition in \cite{ref2} is obtained using a much easier bundle map, however we use bundle maps defining the Fitting ideals needed for the scheme structure while this relation is not clear (to me) with the bundle map used in \cite{FFR}.
In particular it is not clear to me how to obtain the smoothness results in Section 4 of \cite{ref2} from the bundle map used in \cite{FFR}.

While the injectivity of the Petri map mentioned in Theorem A is used mostly to conclude smoothness, we prove smoothness to conclude the injectivity of the Petri map.
However the injectivity of the Petri map of $L$ implies more than the smoothness of $W^r_d(C)$ at some $L \in W^r_d(C) \setminus W^{r+1}_d(C)$.
As a matter of fact, in case $L\in W^{r+1}_d(C)$ we have $L \in \Sing (W^r_d(C))$ and it is interesting to know more about this singularity.
It is proved in \cite{ref16} that a lot of information on it follows from the injectivity of the Petri map of $L$ in case $F$ has characteristic 0.

In Section 2 we recall some ingredients we are going to use in the proofs of the theorems.
In particular we recall the scheme structures on $W^r_d(C)$ and $\overline {\Sigma}_{\overrightarrow {e}}(C,f)$.
We also recall some of the main theorems of Hurwitz-Brill-Noether Theory.

In Section 3 we obtain a proof of Theorem C (see Theorem \ref{TheoC}).
It contains the basic result of this paper (Proposition \ref{prop1}) which reduces the number of Fitting ideals needed to define the scheme structure of $\overline{\Sigma}_{\overrightarrow {e}}(C,f)$ at certain points.
In particular Corollary  \ref{cor3.1} is a much more general statement than Theorem C.

In Section 4 we prove Theorem A (see Theorem \ref{TheoA}).
We also find that for a general $k$-gonal curve it is possible to have $L \in W^r_d(C) \setminus W^{r+1}_d(C)$ and $W^r_d(C)$ singular at $L$.
This can be caused because $L$ is contained in more than one component of $W^r_d(C)$ (see Proposition \ref{prop4}; this can be called a trivial reason) but it also occurs when $L$ is contained is a unique component of $W^r_d(C)$ (see Proposition \ref{prop5} and some examples).

In Section 5 we show how Petri maps can be used to generalize a smoothness result contained in \cite {ref6} (see Proposition \ref{prop5.1}).
Similar arguments are used to prove Theorem B (see Theorem \ref{theorem5.1} which is a more concrete statement).

\subsection{Some conventions and notations}

As this paper can be considered as a continuation of \cite{ref2} we continue to use the notations from that paper.
In particular, if $\mathcal{F}$ is a sheaf on a scheme $T$ of finite type over $F$ and $x$ is a closed point of $T$, then $\mathcal{F}_x$ is the stalk and $\mathcal{F}(x)=\mathcal{F}_x \otimes _{\mathcal{O}_{T,x}} F(x)$ is called the fibre at $x$.

Let $X$ be a scheme of finite type defined over $F$.
Unless otherwise stated, by a point on $X$ we mean a closed point.
We write $\Sing (X)$ to denote the set of the singular points of $X$ and $\Sm (X)$ to denote the set of the smooth points of $X$.
In case $\mathcal{I}$ is a sheaf of $\mathcal{O}_X$-ideals then we write $Z(\mathcal{I})$ to denote the closed subscheme of $X$ defined by $\mathcal{I}$ (hence it is $\Supp (\mathcal{O}_X / \mathcal {I})$ with structure sheaf $\mathcal {O}_X / \mathcal{I}$).
We say $X$ is irreducible at $p \in X$ in case $p$ is not contained in two different irreducible components of $X$.
In that case, the dimension $\dim_p(X)$ of $X$ at $p$ is the dimension of the irreducible component of $X$ containing $p$.
We say that the scheme $X$ is locally a complete intersection at $p \in X$ if there is an open neighbourhood $U$ of $p$ in $X$ such that, as an open subscheme of $X$, the scheme $U$ is isomorphic to a closed subscheme of some smooth scheme $V$ of finite type over $F$ defined by $\codim _V (U)$ equations.
\section{Preliminaries}\label{section2}

We start by recalling some well-known facts on degeneracy loci of morphisms of locally free sheaves.

Let $M(m,n)\cong \mathbb{A}^{mn}_F$ be the variety of $m \times n$-matrices over $F$ and for $0 \leq k \leq \min \{m,n\}$ let $M_k(m,n)$ be the locus of matrices of rank at most $k$.
As a reduced closed subscheme of $M(m,n)$ it is defined by the minors of order $k+1$ (see e.g. \cite{ref9}, Chapter II, Proposition p. 71; although in \cite{ref9} one takes $F=\mathbb{C}$, the results of Chapter II do hold for arbitrary $F$).
The smooth locus $\Sm (M_k(m,n))$ of $M_k(m,n)$ is equal to $M_k(m,n)\setminus M_{k-1}(m,n)$ (see \cite {ref9}, Chapter II, Proposition p. 69).
The codimension of $M_k(m,n)$ in $M(m,n)$ is equal to $(m-k)(n-k)$.
In particular, for $A \in M_k(m,n) \setminus M_{k-1}(m,n)$, locally at $A$ the ideal of $M_k(m,n)$ in $M(m,n)$ is generated by $(m-k)(n-k)$ equations.

Let $X$ be a scheme of finite type defined over $F$ and let $\phi : E \rightarrow F$ be a homomorphism of locally free sheaves on $X$ with $\rk (E)=n$ and $\rk (F)=m$.
Let $U$ be an open subset of $X$ such that $E \vert _U$ and $F \vert _U$ are trivial.
Using fixed trivialisations on $U$ the morphism $\phi$ corresponds to a morphism $f_U :  U \rightarrow M(m,n)$.
For $0 \leq k \leq \min \{ m,n \}$ we obtain a closed subscheme $f_U^{-1}(M_k(m,n))$ of $U$.
Those closed subschemes patch together to obtain the so-called $k$-th degeneracy locus $X_k(\phi)$ of $\phi$.
As a set we have
\[
X_k (\phi) = \{ p \in X : \rk (\phi (p))\leq k \} \text { .}
\]
In case $p \in X_k(\phi)$ and $\dim _p(X)=d$ then $\dim _p(X_k(\phi)) \geq d-(m-k)(n-k)$ and in case $p \notin X_{k-1}(\phi)$ then locally at $p$ the subscheme $X_k(\phi)$ of $X$ is defined by $(m-k)(n-k)$ equations.
In case $X$ is irreducible at $p$ and $\dim _p(X_k(\phi))=d-(m-k)(n-k)$ then we say $X_k(\phi)$ has the expected codimension in $X$ at $p$.
It is know that in that case, if $X$ is smooth at $p$, then $X_k(\phi)$ is Cohen-Macauly at $p$ (also if $p \in X_{k-1}(\phi)$) (see \cite{ref9}, Chapter II, Proposition (4.1)).

\begin{remark}\label{rem1.1}
We are going to obtain the following situation.
The scheme $X$ is irreducible at $p$ of dimension $d$ and it is locally a complete intersection at $p$.
On $X$ there is a homomorphism of locally free sheaves $\phi : E \rightarrow F$ with $\rk (E)=n$ and $\rk (F)=m$ and $p \in X_k(\phi)$.
From the previously mentioned facts it follows that, in case $\dim _p(X_k(\phi))=d-(m-k)(n-k)$ then also $X_k(\phi)$ is locally a complete intersection at $p$.
It is known that this implies $X_k(\phi)$ is Cohen-Macauly at $p$ (see Proposition 18.13 in \cite{ref5}).
\end{remark}

Let $C$ be a smooth irreducible projective curve of genus $g$ over $F$.
We recall the scheme structure on $W^r_d(C)$ as described in \cite {ref9}, Chapter IV, Remark (3.2).
We will use the following notations throughout in this paper.

\begin{notation}\label{not1.1}
We write $\mathcal{P}_d$ to denote the Poincar\'e bundle on $C \otimes \Pic ^d(C)$ and we write $p_d : C \times \Pic ^d(C) \rightarrow C$ and $\pi _d : C \times \Pic ^d(C) \rightarrow \Pic ^d(C)$ to denote the projections (and often we just omit the subscript $d$).
\end{notation}

Let $D$ be a divisor of large degree on $C$ and consider the exacte sequence
\[
0 \rightarrow \mathcal{P}_d \rightarrow \mathcal {P}_d \otimes p^*(\mathcal{O}_C(D)) \rightarrow \mathcal {P}_d \otimes p^*(\mathcal{O}_C(D))\vert _{D \times \Pic^d(C)} \rightarrow 0 \text { .}
\]
Since for each $L \in \Pic^d(C)$ we have $h^1(L(D))=0$ we have an exact sequence
\begin{multline*}
0 \rightarrow \pi_*(\mathcal{P}_d) \rightarrow \pi_*(\mathcal{P}_d \otimes (\mathcal{O}_C(D))) \rightarrow \\  \pi_*(\mathcal{P}_d \otimes p^*(\mathcal {O}_C(D))\vert _{D\times \Pic^d(C)}) \rightarrow R^1\pi_*(\mathcal {P}_d) \rightarrow 0 \text { .}
\end{multline*}
Using Grauert's Theorem (see e.g. \cite {ref1}, Chapter III, Corollary 12.9) we have $E=\pi_*(\mathcal{P}_d\otimes p^*(\mathcal{O}_C(D))$ is locally free of rank $d+\deg (D)-g+1$ and $F=\pi_*(\mathcal{P}_d \otimes p^*(\mathcal{O}_C(D))\vert _{D \times \Pic^d(C)})$ is locally free of rank $\deg (D)$, hence we obtain a so-called locally free presentation of $R^1\pi_*(\mathcal{P}_d)$, namely the exact sequence
\[
E \xrightarrow {\phi} F \rightarrow R^1\pi_*(\mathcal{P}_d) \rightarrow 0 \text { .}
\]
For $L \in \Pic^d(C)$ we have $R^1\pi_*(\mathcal{P})(L) \equiv H^1(C,L)$ (this follows from Cohomology and base change, see e.g. \cite{ref1}, Chapter III, Theorem 12.11).
From the Riemann-Roch Theorem it therefore follows that as a set (using the notation introduced above)
\[
W^r_d(C)=\Pic^d(C)_{r-d+g}(\phi) \text { .}
\]
The ideal of the scheme structure on $\Pic^d(C)_{r-d+g}(\phi)$ mentioned above is the so-called $(r-d+g)$-th Fitting ideal of $R^1\pi_*(\mathcal{P}_d)$ and this ideal $\Fitt _{r-d+g}(R^1\pi_*(\mathcal{P}_d))$ is independent of the chosen representation of $R^1\pi_* (\mathcal{P}_d)$ (locally, see e.g. \cite {ref5} Corollary-Definition 20.4 and globally see \cite{ref5} Corollary 20.5, showing all local definitions patch together; the theory of Fitting ideals originally appeared in \cite {ref14}).

\begin{definition}\label{schemespecialdivisors}
Let $C$ be a smooth curve of genus $g$.
The scheme $W^r_d(C)$ of special invertible sheaves is the closed subscheme of $\Pic^d(C)$ defined by the sheaf of ideals $\Fitt _{r-d+g}(R^1\pi_*(\mathcal{P}_d))$.
\end{definition}

Now let $f:  C \rightarrow \mathbb{P}^1$ be a morphism of degree $k$ corresponding to a line bundle $M$ on $C$ (and we always assume $h^0(M)=2$).
For $L \in \Pic^d(C)$ in \cite{ref7} and \cite{CPJ} one intoduces the so-called spitting sequence $\overrightarrow {e}$ of $L$ (with respect to $f$).
Associated to it one defines Brill-Noether splitting loci and degeneracy loci.
We recall the definition and we recall the scheme structure on degeneracy loci.

\begin{definition}\label{def1.1}
Let $L$ be a line bundle of degree $d$ on $C$ and write $f_*(L) \cong \mathcal{O}_{\mathbb{P}^1}(e_1) \oplus \cdots \oplus \mathcal{O}_{\mathbb{P}^1}(e_k)$ for some integers $e_1 \leq \cdots \leq e_k$.
We call $\overrightarrow {e}=(e_1, \cdots , e_k)$ the splitting type of $L$ (and then $f_*(L)$ is denoted by $\mathcal{O}(\overrightarrow {e})$).
\end{definition}

The splitting type in Definition \ref{def1.1} satisfies
\begin{equation}\label{eq1}
\Sigma_{i=1}^k e_i=d-g+1-k \text { .}
\end{equation}
From now on, fixing $d$, $g$ and $k$, a sequence $(e_1 , \cdots , e_k)$ satisfying Equation \ref{eq1} is called a splitting sequence.

\begin{definition}\label{def1.2}
Associated to a splitting type $\overrightarrow {e}$ we define the Brill-Noether splitting locus (as a set)
\[
\Sigma _{\overrightarrow {e}}(C,f)=\{ L \in \Pic^d(C) : f_*(L) \cong \mathcal{O}(\overrightarrow {e}) \} \text { .}
\]
\end{definition}

We introduce a partial ordening on the splitting sequences considered in Definition \ref{def1.1}.
\begin{definition}\label{def1.3}
Let $\overrightarrow {e}$ and $\overrightarrow {e'}$ be two splitting sequences of length $k$ with $\Sigma_{i=1}^k e_i=\Sigma_{i=1}^k e'_i$ then $\overrightarrow {e'} \leq \overrightarrow {e}$ if and only if for all $1 \leq l \leq k$ we have $\Sigma_{i=1}^l e'_i \leq \Sigma_{i=1}^l e_i$.
If moreover $\overrightarrow{e} \neq \overrightarrow {e'}$ then we write $\overrightarrow {e'}<\overrightarrow {e}$.
\end{definition}

\begin{definition}\label{def1.4}
Let $\overrightarrow {e}$ be a splitting sequence.
The associated Brill-Noether degeneracy locus of $f$ is defined by (as a set)
\[
\overline{\Sigma}_{\overrightarrow {e}}(C,f)= \cup_{\overrightarrow {e'} \leq \overrightarrow {e}}\Sigma_{\overrightarrow {e'}}(C,f) \text { .}
\]
It is a closed subspace of $Pic^d(C)$.
\end{definition}

In \cite {ref6} one introduces a scheme structure on those Brill-Noether degeneracy loci.
By definition we have
\[
L\in \overline{\Sigma}_{\overrightarrow {e}}(C,f) \Leftrightarrow \text { for each integer $n$ }: h^0(L+nM)\geq \sum_{j=1}^k(e_j+n+1 : e_j+n \geq 0) \text { .}
\]
\begin{notation}\label{notrn}
Once $\overrightarrow {e}$ is fixed we write $r(n)$ to denote $\sum_{j=1}^k (e_j+n+1 : e_j+n \geq 0)-1$.
\end{notation}
Then the previous equivalence can be written as an equation of sets
\[
\overline {\Sigma}_{\overrightarrow{e}}(C,f)=\cap_{n \in \mathbb{Z}}W^{r(n)}_{d+nk}(C) \text { .}
\]
In order to make this intersection meaningful we have to identify $W^{r(n)}_{d+nk}(C) \subset \Pic^{k+nk}(C)$ with its image under the isomorphism $\Pic ^{d+nk}(C) \rightarrow \Pic^d(C):L \rightarrow L-nM$.
Of course in this way we can consider $W^{r(n)}_{d+nk}(C)$ as a closed subscheme of $\Pic^d(C)$ using the scheme structure introduced in Definition \ref{schemespecialdivisors}.

\begin{notation}\label{notFitt}
The subschemes $W^{r(n)}_{d+kn}(C)$ of $\Pic ^{d+nk}(C)$ are defined by the $(g-(d+kn)+r(n))$-th Fitting ideal of $R^1\pi_*(\mathcal{P}_d \otimes p^*(M^{\otimes n}))$ for all $n \in \mathbb{Z}$.
The corresponding ideals of $W^{r(n)}_{d+nk}(C) \subset \Pic^d(C)$ are denoted by $\Fitt (\overrightarrow {e},n)$.
\end{notation}

\begin{definition}\label{schemedegloci}
Let $C$ be a smooth curve of genus $g$ and let $f: C \rightarrow \mathbb{P}^1$ ba a morphism of degree $k$ corresponding to an invertible sheaf $M$ on $C$ with $h^0(C,M)=2$.
Fix a non negative integer $d$ and let $\overrightarrow {e}$ be a splitting sequence.
The Brill-Noether degeneracy scheme $\overline{\Sigma}_{\overrightarrow {e}}(C,f)$ is the scheme theoretical intersection $\cap_{n \in \mathbb{Z}}W^{r(n)}_{d+nk}(C)$, so it is the closed subscheme of $\Pic^d(C)$ defined by the sheaf of ideals generated by all sheafs of ideals $\Fitt (\overrightarrow{e},n)$.
\end{definition}

In Section \ref{section2} for certain elements $L \in \overline {\Sigma}_{\overrightarrow {e}}(C,f)$ we are going to restrict those numbers $n$ needed to describe $\overline{\Sigma}_{\overrightarrow {e}}(C,f)$ locally at $L$. At those points $L$ we are going to describe the Brill-Noether degeneracy scheme as a sequence of degeneracy loci such that in case $f$ is general their codimensions are the expected ones.  As an application we obtain some smoothness results on $\overline{\Sigma}_{\overrightarrow {e}}(C,f)$ in case $f$ is general.

\begin{notation}\label{notue}
Associated to a splitting sequence $\overrightarrow{e}$ we define the integer $u(\overrightarrow {e})=\sum _{1 \leq i <j \leq k} \max \{ 0,e_j-e_i-1 \}$.
\end{notation}

Now we recall some results proved on those Brill-Noether degeneracy loci in case $C$ is a general $k$-gonal curve.
\begin{proposition}\label{prop1.1}
Let $C$ be a general $k$-gonal curve of genus $g$ (as always in this paper we assume $k< [\frac{g+3}{2}]$).
We write $f : C \rightarrow \mathbb{P}^1$ for a morphism of degree $k$ corresponding to the unique $M \in W^1_k(C)$.
\begin{enumerate}
\item $\Sigma_{\overrightarrow{e}}(C,f)$ is empty if $u(\overrightarrow {e})>g$ and in case it is non-empty it has dimension $g-u(\overrightarrow {e})$ (see \cite{ref7}, \cite{CPJ} and \cite{FFR} for different proofs).
\item In case $u(\overrightarrow {e})\leq g$ then $\Sigma _{\overrightarrow {e}}(C,f)$ is not empty (see \cite{ref7}, \cite{CPJLLV} and \cite{FFR} for different proofs).
\item $\overline {\Sigma}_{\overrightarrow {e}}(C,f)$ is the closure of $\Sigma_{\overrightarrow {e}}(C,f)$ (see \cite{ref7}).
\item In case $u(\overrightarrow {e})<g$ then the scheme $\overline{\Sigma}_{\overrightarrow {e}}(C,f)$ is integral (see \cite{ref6}).
\item $\Sigma _{\overrightarrow {e}}(C,f)$ is smooth as a set (see \cite {ref7}) and it is contained in the smooth locus of the scheme $\overline {\Sigma}_{\overrightarrow {e}}(C,f)$ (see \cite {ref2}, remark 5, where it is explained this follows immediately from results of H. Larson).
\end{enumerate}
\end{proposition}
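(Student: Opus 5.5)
Since Proposition \ref{prop1.1} collects known results of Hurwitz--Brill--Noether theory, the plan is to reduce each item to the cited sources and to indicate how the items depend on one another. The common mechanism is deformation theory of the splitting type. For $L\in\Sigma_{\overrightarrow{e}}(C,f)$ with $f_*(L)\cong\mathcal{O}(\overrightarrow{e})$, the obstruction to keeping $f_*(L)$ of type $\overrightarrow{e}$ in a first order deformation of $L$ lies in $H^1(\mathbb{P}^1,\mathcal{E}nd(\mathcal{O}(\overrightarrow{e})))$. This space has dimension $\sum_{i,j}h^1(\mathcal{O}(e_j-e_i))=u(\overrightarrow{e})$. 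The deformation map is $H^1(C,\mathcal{O}_C)\to H^1(\mathbb{P}^1,\mathcal{E}nd(f_*L))$, obtained from $\mathcal{O}_C\to f^*\mathcal{E}nd(f_*L)$ followed by pushing forward. Its kernel is the tangent space to the splitting locus, and its dual is a Petri-type map
\[
\mu_{\overrightarrow{e}} : \bigoplus_{i,j} H^0(\mathcal{O}(e_i-e_j-2))\otimes \ldots \to H^0(C,K).
\]
This gives the lower bound $\dim_L\Sigma_{\overrightarrow{e}}(C,f)\geq g-u(\overrightarrow{e})$ for free, and the splitting locus is smooth of exactly that dimension at $L$ once this Petri-type map is injective.

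For (1) and (5), I would invoke the theorem (H. Larson, and independently \cite{ref7}, \cite{CPJ}, \cite{FFR}) that for a general $k$-gonal curve this splitting Petri map is injective at every $L$. Together with the lower bound, this gives emptiness when $u(\overrightarrow{e})>g$ and smoothness of dimension $g-u(\overrightarrow{e})$ otherwise. To pass from smoothness of the set $\Sigma_{\overrightarrow{e}}(C,f)$ to smoothness of the scheme $\overline{\Sigma}_{\overrightarrow{e}}(C,f)$ along it, I would follow \cite{ref2}, Remark 5. At a point of $\Sigma_{\overrightarrow{e}}(C,f)$, the Zariski tangent space of the intersection of the Fitting schemes $W^{r(n)}_{d+nk}(C)$ is the intersection over $n$ of the spaces $(\im\mu_{L+nM})^{\perp}$. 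One checks that this intersection is exactly the kernel of the deformation map above, and hence has dimension $g-u(\overrightarrow{e})$. This identification of the tangent space of the multi-Fitting-ideal scheme with the kernel of the splitting deformation map is the step I expect to be the main obstacle. It requires comparing $\bigoplus_n H^0(L+nM)\otimes H^0(K-L-nM)$ with the decomposition of $f_*$ into line bundles.

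For (2), non-emptiness when $u(\overrightarrow{e})\leq g$, I would quote a degeneration argument: a chain of elliptic curves in \cite{CPJLLV}, limit linear series and the dimension count in \cite{ref7}, or the tropical/$k$-gonal chain in \cite{FFR}. In each case one exhibits a point of the right type on a special $k$-gonal curve and uses the lower bound on dimension to deform it to the general curve. For (3), I use the partial order: $\overline{\Sigma}_{\overrightarrow{e}}(C,f)$ is locally a degeneracy locus in the versal family of bundles on $\mathbb{P}^1$, where $\overrightarrow{e'}\leq\overrightarrow{e}$ lies in the closure of $\overrightarrow{e}$. Each stratum $\Sigma_{\overrightarrow{e'}}(C,f)$ with $\overrightarrow{e'}<\overrightarrow{e}$ has dimension $g-u(\overrightarrow{e'})<g-u(\overrightarrow{e})$. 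It therefore cannot form a component, because every component has dimension at least $g-u(\overrightarrow{e})$ by the pullback of the codimension of the versal degeneracy locus. Hence the closure is all of $\overline{\Sigma}_{\overrightarrow{e}}(C,f)$.

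Finally, for (4), following \cite{ref6}, the scheme $\overline{\Sigma}_{\overrightarrow{e}}(C,f)$ has the expected codimension. It is therefore Cohen--Macaulay as a pullback of a Cohen--Macaulay versal locus (compare Remark \ref{rem1.1}), and by (5) it is generically reduced. Cohen--Macaulay plus generically reduced gives reducedness. Irreducibility for $u(\overrightarrow{e})<g$ would come from a monodromy or connectedness argument, for instance a Fulton--Lazarsfeld type connectedness for degeneracy loci of positive dimension combined with smoothness of the open dense stratum in codimension one.
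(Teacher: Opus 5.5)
Your proposal matches the paper, which gives no argument for Proposition \ref{prop1.1} beyond the references to \cite{ref7}, \cite{CPJ}, \cite{CPJLLV}, \cite{FFR}, \cite{ref6} and \cite{ref2}, and your sketches agree with what those sources do. In particular, the step in (5) that you flag as the main obstacle is easy: $\sum_n \im\mu_{L+nM}$ equals the image of the splitting Petri map, because multiplication $H^0(\mathcal{O}_{\mathbb{P}^1}(a))\otimes H^0(\mathcal{O}_{\mathbb{P}^1}(b))\rightarrow H^0(\mathcal{O}_{\mathbb{P}^1}(a+b))$ is surjective for $a,b\geq 0$, so $\bigcap_n(\im\mu_{L+nM})^{\perp}$ is the kernel of $H^1(C,\mathcal{O}_C)\rightarrow H^1(\mathbb{P}^1,\mathcal{E}nd(f_*L))$. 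For (4), state precisely that irreducibility comes from normality together with connectedness, where normality means Cohen--Macaulay plus smoothness along the codimension-one strata $\Sigma_{\overrightarrow{e'}}(C,f)$ with $u(\overrightarrow{e'})=u(\overrightarrow{e})+1$ (\cite{ref6}, Theorem 9.1); smoothness along the open stratum alone would not suffice.
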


In \cite{ref7} one also obtains a description of the irreducible components of $W^r_d(C)$ based on some of the results of Proposition \ref{prop1.1}.
\begin{definition}\label{balplusbal}
A splitting sequence $\overrightarrow {e}$ is called balanced plus balanced if the following holds.
Let $1 \leq c \leq k$ with $e_i <0$ for $i<c$ and $e_i \geq 0$ for $i \geq c$.
Then $e_{c-1} - e_1 \leq 1$ and $e_k - e_c \leq 1$.
\end{definition}

In \cite{ref2} we introduced the following notation to describe a splitting sequence of balanced plus balanced type.
We say $\overrightarrow {e}$ is of type $B(a,b,y,u,v)$ in case $\overrightarrow {e}$ is of the form
\[
(\overbrace{-b-1, \cdots , -b-1} ^x, \overbrace{-b, \cdots, -b}^y, \overbrace {a, \cdots, a}^u, \overbrace {a+1, \cdots , a+1}^v)
\]
with $b \geq 1$, $a \geq 0$, $y>0$, $u>0$, $v \geq 0$ and then of course $x=k-y-u-v \geq 0$.
The irreducible components of $W^r_d(C)$ are the spaces $\overline {\Sigma}_{\overrightarrow {e}}(C,f)$ with $\overrightarrow {e}$ of type $B(a,b,y,u,v)$ satisfying Equation \ref{eq1}, $u(\overrightarrow {e})\leq g$, $u(a+1)+v(a+2)=r+1$ and $b \geq 2$ or $(a,b,v)=(0,1,0)$ (of course, in case $u(\overrightarrow {e})=g$ then it is a finite set and each point of it is an irreducible component).

\begin{remark}\label{types}
Also  in \cite {ref2} we called such an irreducible component of type I if $(a,v)=(0,0)$, of type II in case $a=0$ and $v \geq 1$ and of type III otherwise (see \cite{ref2} for the motivation of making this partition).
\end{remark}

\begin{notation}\label{notfree}
In \cite{ref2} we defined $\L \in \Pic^d(C)$ being free from $M$ in case $h^0(L-M)=0$ and $\Pic^d_f(C) \subset \Pic^d(C)$ 
being the open subset of invertible sheaves $L$ free from $M$.
We notice that  for a general $k$-gonal curve $C$ those subsets $\Pic^d_f(C)$ satisfies the classical statements of Brill-Noether theory concerning dimension and non-emptiness in case $r \leq k-1$.
The irreducible components of $W^r_{d,f}(C)=W^r_d(C) \cap \Pic^d_f(C)$ are the intersections of $\Pic^d_f(C)$ with the irreducible components of $W^r_d(C)$ of type I.
\end{notation}

\section{The ideal of the splitting degeneracy loci}\label{section2}
We first introduce some notations related to a splitting sequence $\overrightarrow{e}$.

\begin{notation}\label{not2.1}
Let $e \in \mathbb{Z}$ and define $i(\overrightarrow {e},e)=k+1$ if $e_k <e$ and $i(\overrightarrow{e},e)= \min \{ i\in \mathbb{Z} : e \leq e_i\}$ if $e\leq e_k$.
We define the splitting type $\overrightarrow{e}[e] = (e_1[e], \cdots ,e_k[e])$ with $\Sigma_{i=1}^k e_i[e]=d-g+1-k$ such that $e_j[e]=e_j$ if $j \geq i(\overrightarrow{e},e)$ (in particular $\overrightarrow{e}[e]=\overrightarrow {e}$ in case $e \leq e_1$) and in case $e_1 <e$ then $e_{i(\overrightarrow{e},e)-1}[e]-e_1[e] \leq 1$.
This last condition means $\overrightarrow{e} [e]$ is completed in a balanced way (see Definition \ref{balplusbal}) for $i< i(\overrightarrow{e},e)$.
It also implies that $\overrightarrow {e} [e]=\overrightarrow {e}$ if and only if $e \leq \min \{e_i : e_i \geq e_1+2 \}$.
\end{notation}

\begin{example}\label{ex3.1}
Assume $g=20$ and consider $\overrightarrow {e}=(-4, -3, -1, -1, 1, 3)$.
Then $k=6$ and $d=20$.
In case $e>3$ we have $i(\overrightarrow {e},e)=7$ and $\overrightarrow {e}[e]=(-1,-1,-1,-1,-1,0)$.
In case $e \in \{2,3\}$ we have $i(\overrightarrow {e},e)=6$ and $\overrightarrow {e}[2]=\overrightarrow {e}[3]=(-2,-2,-2,-1,-1,3)$.
In case $e \in \{ 0,1 \}$ we have $i(\overrightarrow {e},e)=5$ and $\overrightarrow {e}[0]=\overrightarrow {e}[1]=(-3, -2, -2, -2, 1, 3)$.
In case $e \in \{-2, -1 \}$ we have $i(\overrightarrow {e},e)=3$ and $\overrightarrow {e}[-2]=\overrightarrow {e}[-1]=(-4, -3, -1, -1, 1,3)$.
In case $e =-3$ we have $i(\overrightarrow {e},e)=2$, in case $e \leq -4$ we have $i(\overrightarrow {e},e)=1$ and in those cases we have $\overrightarrow {e}[e]=\overrightarrow {e}$.
\end{example}

\begin{definition}\label{def2.1}
We also introduce the following loci in $\Pic^d(C)$:

\[
\overset{\circ}{ \Sigma}_{\overrightarrow{e}[e]}(C,f)=\cup_{\overrightarrow{e'}}\Sigma_{\overrightarrow{e'}}(C,f)\subset \overline{\Sigma}_{\overrightarrow{e}[e]}(C,f)
\]
where the union is taken for all $\overrightarrow{e'} \leq \overrightarrow{e}[e]$ satisfying $e'_j=e_j$ for all $j \geq i(\overrightarrow{e},e)$ and $e'_j<e$ for $j < i(\overrightarrow {e},e)$.
\end{definition}

\begin{example}\label{ex3.2}[Continuation of Example \ref{ex3.1}]
Let $g=20$, $\overrightarrow {e}=(-4,-3,-1,-1,1,3)$ and take $e=1$.
Let $\overrightarrow{e}'_1=(-4,-2,-2,-1,1,3)$, $\overrightarrow {e}'_2=(-3,-3,-3,0,1,3)$, $\overrightarrow {e}'_3=(-3,-2,-2,-2,0,4)$, $\overrightarrow {e}'_4=(-4,-2,-2,-2,2,3)$ and $\overrightarrow {e}'_5=(-4,-3,-3,1,1,3)$.
For all $1 \leq i \leq 5$ we have $\overrightarrow {e}'_i \leq \overrightarrow {e}[1]$.
Since $(e'_1)_5=(e'_2)_5=e_5$, $(e'_1)_6=(e'_2)_6=e_6$ and $(e'_1)_4 <1$, $(e'_2)_4<1$ we have $\Sigma_{\overrightarrow {e}_i}(C,f) \subset \overset { \circ}{\Sigma}_{\overrightarrow {e}[1]}(C,f)$ for $i \in \{ 1,2 \}$.
Since $(e'_3)_5 \neq e_5$, $(e'_3)_6 \neq e_6$ and $(e'_4)_5 \neq e_5$ we have $\Sigma _{\overrightarrow {e}_i}(C,f) \nsubseteq \overset {\circ}{\Sigma}_{\overrightarrow {e}[1]}(C,f)$ for $3 \leq i \leq 4$.
Although $(e'_5)_5=e_5$ and $(e'_5)_6=e_6$ we have $(e'_5)_4=1$ and therefore also $\Sigma _{\overrightarrow{e}'_5}(C,f) \nsubseteq \overset {\circ}{\sigma}_{\overrightarrow {e}[1]}(C,f)$. 
\end{example}

\begin{remark}\label{warning}
In Example \ref{ex3.2} although $\overrightarrow {e}[0]=\overrightarrow {e}[1]$ we have $\Sigma _{\overrightarrow {e}'_2}(C,f) \subset \overset {\circ}{\Sigma}_{\overrightarrow {e}[1]}(C,f)$ but $\Sigma_{\overrightarrow{e}'_2}(C,f) \nsubseteq \overset {\circ}{\Sigma}_{\overrightarrow {e}[0]}(C,f)$, hence $\overset {\circ}{\Sigma}_{\overrightarrow {e}[1]}(C,f) \neq \overset {\circ}{\Sigma}_{\overrightarrow {e}[0]}(C,f)$.
Therefore in the expression $\overset {\circ}{\Sigma}_{\overrightarrow {e}[e]}(C,f)$ the value of $e$ gives more information than just indicating $\overrightarrow {e}[e]$.
Therefore for a splitting sequence $\overrightarrow {e}$ the expression $\overset {\circ}{\Sigma}_{\overrightarrow {e}}(C,f)$ has no meaning.
\end{remark}

\begin{remark}\label{rem2.1}
As it follows from the proof of Lemma 8 in \cite{ref2}, in case $n \geq -(e_1+1)$ then $h^0(C,L+nM) \geq r(n)+1$ is equivalent to $h^1(C,L+nM)=0$, in particular $\Fitt(\overrightarrow {e},n)=0$.
Also from Lemma 8 in \cite{ref2} it follows $\Fitt (\overrightarrow {e},n)$ is trivial at $L$ in case $n<-e_k$.
\end{remark}

\begin{proposition}\label{prop1}
Let $(C,f)$ be a general $k$-gonal curve of genus $g$.
Let $E = \min \{e_i : e_i \geq e_1+2 \}$.
Locally at $L \in \overset{\circ}{ \Sigma}_{\overrightarrow{e}[E]}(C,f) \subset \overline {\Sigma}_{\overrightarrow {e}}(C,f)$ the scheme $\overline {\Sigma}_{\overrightarrow {e}}(C,f)$ is defined by the Fitting ideals $\Fitt (\overrightarrow {e},n)$ with $n \leq -E$ and $-n \in  \{ e_2, \cdots , e_k \}$.
In particular, locally at $L$ the Fitting ideals $\Fitt (\overrightarrow {e},n)$ with $n>-E$ are contained in the sum of the other ones.
Also locally at $L$ the scheme $\overline {\Sigma}_{\overrightarrow {e}}(C,f)$ is a locally complete intersection in $\Pic^d(C)$.
\end{proposition}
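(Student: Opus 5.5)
We prove the statement by induction on the distinct values among $e_2,\dots,e_k$ that are at least $E$, working downwards from $e_k$. Fix $L\in\overset{\circ}{\Sigma}_{\overrightarrow{e}[E]}(C,f)$ with splitting type $\overrightarrow{e'}$. Then $e'_j=e_j$ for $j\geq i(\overrightarrow{e},E)$ and $e'_j<E$ for $j<i(\overrightarrow{e},E)$. By Remark \ref{rem2.1}, locally at $L$ only the ideals $\Fitt(\overrightarrow{e},n)$ with $-e_k\leq n<-(e_1+1)$ can be non-trivial. So only finitely many Fitting ideals matter, and we must show the following: for $n$ in that range with $n>-E$, or with $-n\notin\{e_2,\dots,e_k\}$, the ideal $\Fitt(\overrightarrow{e},n)$ lies in the sum of the ideals $\Fitt(\overrightarrow{e},m)$ with $m\leq -E$ and $-m\in\{e_2,\dots,e_k\}$.

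The key construction generalises Proposition 2 of \cite{ref2}. Set $n_1=-e_k$. Locally at $L$ the scheme $W^{r(n_1)}_{d+n_1k}(C)$ is a degeneracy locus $X_1$ of the presentation map $\phi$ of $R^1\pi_*(\mathcal{P}\otimes p^*M^{n_1})$. Since $L\notin X_{r'-1}$ for the relevant rank $r'$ (the rank of $\phi(L)$ is exactly the one prescribed by $e'_k=e_k$), $X_1$ is locally cut out by $(m-r')(n-r')$ equations. On $X_1$ the kernel and cokernel of $\phi$ are locally free near $L$, so $\pi_*(\mathcal{P}\otimes p^*M^{n_1})$ becomes a vector bundle on $X_1$. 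For the next value $n_2=-e_j$ (the next distinct entry), we use multiplication by sections of $M$ to write the next Fitting condition on $X_1$ as the degeneracy of a bundle map whose source and target are built from these bundles. Concretely, $H^0(L+n_2M)$ contains $H^0(L+n_1M)\otimes H^0(M^{n_2-n_1})$ and the extra sections are controlled by a reduced presentation. Iterating this gives a chain
\[
\Pic^d(C)\supset X_1\supset X_2\supset\cdots\supset X_s .
\]
Each $X_t$ is a degeneracy locus of a bundle map on $X_{t-1}$, defined near $L$ by the correct number of equations, and the ideal it adds is exactly $\Fitt(\overrightarrow{e},n_t)$ modulo the earlier ones. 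For values of $n$ strictly between consecutive $-e_j$, or with $n>-E$, the rank condition is implied by the earlier ones. This is because on $X_{t-1}$ the relevant $h^1$ or $h^0$ jumps are forced by the previously fixed summands, and the remaining summands of $\overrightarrow{e'}$ are all $<E$ and are completed in a balanced way. Hence $\Fitt(\overrightarrow{e},n)$ already vanishes on the scheme $X_s$, which gives the containment statement.

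For the complete intersection claim, count codimensions. The codimension of $X_t$ in $X_{t-1}$ is the expected $(m_t-r_t)(n_t-r_t)$, and the total sum is computed to be $u(\overrightarrow{e})$: each step contributes the terms $\max\{0,e_j-e_i-1\}$ with $e_j$ in the newly fixed block. For general $(C,f)$, Proposition \ref{prop1.1} gives $\dim\overline{\Sigma}_{\overrightarrow{e}}(C,f)=g-u(\overrightarrow{e})$. Since each $X_t$ has at least the expected dimension, every step has exactly the expected codimension at $L$. Remark \ref{rem1.1} then shows inductively that each $X_t$, in particular $X_s=\overline{\Sigma}_{\overrightarrow{e}}(C,f)$ locally, is a local complete intersection.

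The main obstacle is the middle step. We must identify the successive bundle maps so that their minors generate exactly the Fitting ideals, scheme-theoretically and not just set-theoretically. We also need to verify that $L$ lies in the open stratum of each degeneracy locus, which is precisely where the hypothesis $e'_j<E$ for $j<i(\overrightarrow{e},E)$ is used. I would first try to realise each step via the exact sequence $0\to L+(n-1)M\to (L+nM)^{\oplus 2}\to L+(n+1)M\to 0$ (Koszul for the pencil $M$), pushed forward to $X_{t-1}$.
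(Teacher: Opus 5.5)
Your architecture matches the paper's. You use a top-down induction on the distinct values $\geq E$, cut out each new block as a degeneracy locus on the previous stratum, and get the lci property by comparing expected codimensions with $\dim\overline{\Sigma}_{\overrightarrow{e}}(C,f)=g-u(\overrightarrow{e})$. Your per-block count $t\sum_{e_i<V}(V-e_i-1)$, where $t$ is the multiplicity of the value $V$, agrees with the paper's degeneracy loci, and summing it directly to $u(\overrightarrow{e})$ is cleaner than Lemma \ref{lemma2}.

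\textbf{The construction you leave open.} The step you call the main obstacle is the actual content of the proof, and the one concrete claim you make there fails once you iterate. Your inclusion $H^0(L+n_1M)\otimes H^0(M^{n_2-n_1})\subset H^0(L+n_2M)$ (it should use $S^{n_2-n_1}H^0(M)$) holds for $n_1=-e_k$ only because $h^0(L-(e_k+1)M)=0$. From the next block on, let $E'$ be the previous value and $E\le e<E'$, $z=E'-e$. By Lemma \ref{lemma1}, the multiplication map $H^0(L-E'M)\otimes S^zH^0(M)\to H^0(L-eM)$ has kernel $H^0(L-(E'+1)M)^{\oplus z}\neq 0$.

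The paper's key point is that this kernel has constant rank on $\overset{\circ}{\Sigma}_{\overrightarrow{e}[E']}(C,f)$. Hence the image $N'$ of $\pi_*((\mathcal{P}_d\otimes p^*(-E'M))^{\oplus(z+1)})$ is a subbundle of $\pi_*(\mathcal{P}_d\otimes p^*(-eM+D))$, and it is killed by restriction to $D$. The quotient $N$ is then locally free, and $N\xrightarrow{u}\pi_*((\mathcal{P}_d\otimes p^*(-eM+D))\vert_{D})\to R^1\pi_*(\mathcal{P}_d\otimes p^*(-eM))\to 0$ is a presentation on the stratum. Fitting ideals are independent of the presentation and commute with base change, so the minors of $u$ generate the restriction of $\Fitt(\overrightarrow{e},-e)$ there. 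A rank count then gives two cases:
\begin{itemize}
\item for $E<e<E'$ there are no minors of the required size, so the condition is vacuous;
\item for $e=E$ the condition is the locus $\rk u\le\rk N-t$, with $L$ not on $\rk u\le \rk N-t-1$.
\end{itemize}
Without this construction, or a worked-out iterated Koszul version of it, neither the identification of your $X_t$ with the Fitting ideals nor the expected codimensions you sum are established.

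\textbf{The omitted Fitting ideals.} Your argument that the omitted $\Fitt(\overrightarrow{e},n)$ vanish on $X_s$ is only pointwise. Knowing that the rank conditions hold near $L$, because the entries below $E$ are completed in a balanced way, shows that the supports of $X_s$ and $\overline{\Sigma}_{\overrightarrow{e}}(C,f)$ agree near $L$. It gives only $\Fitt(\overrightarrow{e},n)\subseteq\sqrt{I}$, not $\Fitt(\overrightarrow{e},n)\subseteq I$, where $I$ is the ideal of $X_s$.

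For $-n$ strictly between two consecutive values $\geq E$, the vacuity of minors just described gives the scheme-theoretic statement. For $n>-E$ the paper argues differently. The scheme $U$ cut out by the retained ideals is lci, hence Cohen--Macaulay. It is smooth at points of $\Sigma_{\overrightarrow{e}}(C,f)$, by Proposition 4 of \cite{ref2} together with Proposition \ref{prop1.1}(5). It is irreducible at $L$, by Proposition \ref{prop1.1}(4). Unmixedness then makes $U$ reduced, hence equal to $\overline{\Sigma}_{\overrightarrow{e}}(C,f)$ near $L$.

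You could instead stay inside your approach by running the reduced presentation once more from $E$. For $e_1+2\le e<E$ no $e_i$ equals $e$. On the stratum, the image of $H^0(L-EM)\otimes S^{E-e}H^0(M)$ already has dimension $\sum_{e_i\ge E}(e_i-e+1)=r(-e)+1$, so again there are no minors of the required size. For $e\le e_1+1$, Remark \ref{rem2.1} applies. Either way, this argument still has to be supplied.
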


\begin{remark}\label{remV}
Assume $\overrightarrow{e}$ is of type $B(e,b,y,m_1,m_2)$ with $\Sigma _{i=1}^r e_i=d-g+1-k$ (then $b$ and $y$ are determined by the fact that $\overrightarrow {e}$ is of balanced plus balanced type).
Then $\overset{\circ}{\Sigma}_{\overrightarrow {e}[0]}(C,f)$ as a set is the same as the set introduced in \cite{FFR} denoted by $V^r_{d,l}(C,M)$ (in this paper we are going to write $V^r_{d,l}(C,f)$) with $l=em_1+(e+1)m_2$ and $r+1=(e+1)m_1+(e+2)m_2$.
See Definition \ref{defV} for a concrete description of $V^r_{d,l}(C,f)$ as a set given in \cite{FFR}.
Also notice that we can restrict to the cases with $e_1 \leq -2$ because otherwise $L \in \overset{\circ}{\Sigma}_{\overrightarrow {e}[0]}(C,f)$ is not special.
In case $C$ is a general $k$-gonal curve of genus $g$ then the closures of those subsets are related to the irreducible components of $W^r_d(C)$ (see \cite{FFR}).
In the present paper, since $V^r_{d,l}(C,f)$ is an open subset of $\overline{\Sigma}_{\overrightarrow{e}}(C,f)$ we consider it as a scheme with the natural scheme structure described in Definition \ref{schemedegloci}.
It is using this scheme structure that it fits in families of coverings of genus $g$ and degree $d$ on $\mathbb{P}^1$.

In \cite{FFR} one studies curves $C$ on so called degree $k$ elliptic K3-surfaces.
Such K3-surface has a pencil of elliptic curves inducing a covering $f:C \rightarrow \mathbb{P}^1$.
Using moduli spaces of Bridgeland stable sheaves on the K3 surface in \cite{FFR} one introduces a scheme structure on $V^r_{d,l}(C,f)$ and they prove this is a smooth scheme.
However the relation of that scheme structure with the scheme structure from Definition \ref{defV} is not considered in \cite{FFR}.
Therefore although they obtain $V^r_{d,l}(C,f)$ is smooth as a set for a general such rather special $k$-gonal curve, it is not even clear from the arguments in \cite{FFR} that $V^r_{d,l}(C,f)$ is smooth as a set in case $C$ is a general $k$-gonal curve.
It should also be noted that the arguments in \cite{FFR} make use of characteristic 0 arguments.
Theorem C settles this problem in arbitrary characteristic.
\end{remark}

We summarize the following definition occuring in Remark \ref{remV} using the notations from Remark \ref{remV}.
\begin{definition}\label{defV}
Let
\begin{multline*}
V^r_{d,l}= \{L \in \Pic^d(C) : h^0(L)=r+1, h^0(L-(e+1)M)=m_2 \\ \text { and } h^0(L-eM)=m_1+2m_2 \} \text { .}
\end{multline*}
It is an open subset of $\overline{\Sigma}_{\overrightarrow {e}}(C,f)$ with $\overrightarrow {e}$ of balanced plus balanced type and we define the scheme $V^r_{d,l}(C,f)$ as the open subscheme of the scheme $\overline{\Sigma}_{\overrightarrow {e}}(C,f)$ defined in Definition \ref{schemedegloci}.
\end{definition}

The meaning of Proposition \ref{prop1} is the following: if $L \in \overset{\circ}{\Sigma}_{\overrightarrow {e}[E]}$ and $\overrightarrow {e'}$ is the splitting sequence of $L$ then the values $e'_i <E$ have no influence on the Fitting ideals needed to define the scheme $\overline{\Sigma}_{\overrightarrow {e}}(C,f)$ locally at $L$.
The proof of the proposition makes use of an extension of the base point free pencil trick.
This base point free pencil trick is the following lemma in case $m=1$ and we refer to \cite{ref9} for its proof in that case.
This extension implies and is very similar to Proposition 2.2 in \cite{FFR} but we need a more detailled description compared to what is stated in \cite{FFR}.

\begin{lemma}\label{lemma1}
Let $M$ be a line bundle on a smooth curve $C$ such that $h^0(M)=2$ and the linear system $\vert M \vert$ is base point free.
Let $L$ be another line bundle on $C$ with $h^0(L)\geq 1$ and let $m \in \mathbb{Z}_{\geq 1}$.
Consider the natural multiplication map
\[
\phi _m : H^0(C,L) \otimes S^m H^0(C,M) \rightarrow H^0 (C, L+mM)
\]
(here $S^m$ denotes the symmetric product).
The kernel of $\phi _m$ is naturally isomorphic to $H^0(C,L-M)^{\oplus m}$.
More concretely, after choosing a base $s,t$ for $H^0(C,M)$, the element of $\ker (\phi _m)$ corresponding to $(z_1, \cdots, z_m) \in H^0(C,L-M)^{\oplus m}$ is equal to
\begin{multline*}
z_1t \otimes s^m +(z_2t-z_1s) \otimes s^{m-1}t + (z_3t-z_2s)\otimes s^{m-2}t^2 + \cdots\\
\cdots + (z_mt-z_{m-1}s) \otimes st^{m-1}-z_ms\otimes t^m \text { .}
\end{multline*}
\end{lemma}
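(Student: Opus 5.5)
The plan is to argue by induction on $m$, using the case $m=1$ (the base point free pencil trick, taken from \cite{ref9}) as the starting point. Two things have to be shown: that every element of the displayed form lies in $\ker(\phi_m)$, and that every kernel element has this form for a unique $(z_1,\dots,z_m)$.

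Containment is a direct telescoping computation. Apply $\phi_m$ to the displayed element and write $\sigma_j = z_j st$ times the appropriate monomial. For each $1\le j\le m$, the term $z_j t\otimes s^{m-j+1}t^{j-1}$ maps to $z_j s^{m-j+1}t^{j}$. The term $-z_j s\otimes s^{m-j}t^{j}$ maps to $-z_j s^{m-j+1}t^{j}$. These cancel in pairs, so the image is zero. The same computation shows the map
\[
\psi_m : H^0(C,L-M)^{\oplus m}\to \ker(\phi_m)
\]
is well defined and linear. It is also natural: replacing the basis $(s,t)$ by another one changes it by the induced action of $GL_2$ on $S^{m-1}H^0(M)$, which I would identify with $H^0(L-M)\otimes S^{m-1}H^0(M)\cong H^0(L-M)^{\oplus m}$, an intrinsic description.

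Injectivity of $\psi_m$ is easy. Write the tensor in the monomial basis $s^m, s^{m-1}t,\dots,t^m$ of $S^mH^0(M)$. Its coefficient of $s^m$ is $z_1t$, and $z_1t=0$ forces $z_1=0$. Then the coefficient of $s^{m-1}t$ becomes $z_2t$, forcing $z_2=0$, and so on down the list.

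The main obstacle is surjectivity. Take $\sum_{j=0}^m w_j\otimes s^{m-j}t^j\in\ker(\phi_m)$ with $w_j\in H^0(L)$, so that $\sum_j w_j s^{m-j}t^j=0$ in $H^0(L+mM)$. Group this as
\[
s\Bigl(\sum_{j<m} w_j s^{m-1-j}t^j\Bigr) + w_m t^m = 0 .
\]
Since $|M|$ is base point free, $s$ and $t$ have disjoint zero divisors. Hence $w_m$ vanishes on the divisor of $s$, and we can write $w_m=-z_m s$ with $z_m\in H^0(L-M)$. Substituting back and dividing by $s$ gives
\[
\sum_{j<m} w_j s^{m-1-j}t^j - z_m t^{m} = 0 .
\]
Equivalently, the element obtained from $\sum_{j<m} w_j\otimes s^{m-1-j}t^j$ by replacing $w_{m-1}$ with $w_{m-1}-z_mt$ lies in $\ker(\phi_{m-1})$. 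For $m=1$ this reduces to $w_0=z_1t$, which is exactly the base case. For $m\ge 2$, induction gives $z_1,\dots,z_{m-1}$ with $w_0=z_1t$, $w_j=z_{j+1}t-z_js$ for $1\le j\le m-2$, and $w_{m-1}-z_mt=-z_{m-1}s$. Together with $w_m=-z_ms$, this is precisely the displayed form. So $\psi_m$ is surjective and therefore an isomorphism, proving the lemma.
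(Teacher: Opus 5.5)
Your proof is correct and is essentially the paper's argument: induction on $m$, using disjointness of the zero divisors of $s$ and $t$ to divide an extreme coefficient and then reduce to $\ker(\phi_{m-1})$. The only differences are that you peel off the $t^m$ coefficient (divisible by $s$) rather than the $s^m$ coefficient (divisible by $t$), you avoid the paper's case split on whether $y_0=0$, you obtain $m=1$ directly instead of citing it, and you verify containment and injectivity, which the paper calls clear; your naturality remark is loose (the intrinsic source is $H^0(L-M)\otimes S^{m-1}H^0(M)\otimes\wedge^2H^0(M)$ via $z\otimes p\otimes(s\wedge t)\mapsto zt\otimes sp-zs\otimes tp$), but the paper does not prove naturality either.
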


\begin{proof}
It is clear that the explicit description of elements of $\ker (\phi_m)$ identifies $H^0(C,L-M)^{\oplus m}$ with a subvectorspace of $\ker (\phi _m)$.

Let $s$, $t$ be a base for $H^0(C,M)$ and let $Z(s)$ and $Z(t)$ be the corresponding effective divisors on $C$.
By assumption $Z(s) \cap Z(t) = \emptyset$.
We already mentioned the case $m=1$ is the classical base point free pencil trick.
Then one has the explicite description
\[
\ker (\phi _1) = \{zt\otimes s-zs \otimes t : z \in H^0(C,L-M) \} \text { .}
\]

Let $m \geq 2$ and as an induction hypothesis we assume
\begin{multline*}
\ker (\phi _{m-1}) = \{ z_1t\otimes s^{m-1}+(z_2t-z_1s)\otimes s^{m-2}t +(z_3t-z_2s)\otimes s^{m-3}t^2+ \cdots \\
+(z_{m-1}t-z_{m-2}s) \otimes st^{m-2}-z_{m-1}s\otimes t^{m-1} : z_1, \cdots , z_{m-1} \in H^0(C,L-M) \} \text { .}
\end{multline*}
Let $\Sigma _{i=0}^m y_i \otimes s^{m-i}t^i \in \ker (\phi _m)$.

First assume $y_0=0$.
In $H^0(C,L+mM)$ we have $(\Sigma_{i=1}^my_is^{m-i}t^{i-1})\cdot t=0$ hence $\Sigma _{i=1}^my_is^{m-i}t^{i-1}=0$ in $H^0(C,L+(m-1)M)$.
In particular $\Sigma _{i=1}^my_i\otimes s^{m-i}t^{i-1} \in \ker (\phi_{m-1})$ hence there exist $z_2, \cdots , z_m \in H^0(C,L-M)$ with
\begin{multline*}
\Sigma _{i=1}^my_i\otimes s^{m-i}t^{i-1}=z_2t\otimes s^{m-1}+(z_3t-z_2s)\otimes s^{m-2}t + \cdots \\
+ (z_mt-z_{m-1}s)\otimes st^{m-2}-z_ms\otimes t^{m-1} \text { .}
\end{multline*}
Multiplying with $t$, adding a term $0 \cdot t \otimes s^m$ on the right and replacing $z_2t\otimes s^{m-1}t$ by $(z_2t-0\cdot s)\otimes s^{m-1}t$ we obtain $(0,z_2, \cdots , z_m) \in H^0(C,L-M)^{\oplus m}$ corresponding to $\Sigma _{i=0}^m y_i \otimes s^{m-i}t^i$ as described in the statement.

Now assume $y_0 \neq 0$.
In $H^0(C,L+mM)$ we have $s^my_0+(\Sigma _{i=1}^my_is^{m-i}t^{i-1})\cdot t=0$ with $\Sigma _{i=1}^my_is^{m-i}t^{i-1} \in H^0(C,L+(m-1)M)$.
Since $Z(s) \cap Z(t) = \emptyset$ it implies there exists $z_1 \in H^0(C,L-M)$ with $s^my_0 = s^mtz_1$ and we have
\[
(sz_1+y_1)s^{m-1}t+\Sigma _{i=2}^m y_is^{m-i}t^i=0
\]
in $H^0(C,L+mM)$.
So we find
\[
(sz_1+y_1)\otimes s^{m-1}t + y_2 \otimes s^{m-2}t^2 + \cdots + y_m \otimes t^m \in \ker (\phi _m)
\]
having its term in $H^0(C,L) \otimes s^m$ being zero.
So we already proved the existence of $z_2, \cdots , z_m \in H^0(C,L-M)$ with $s_1z_1+y_1=z_2t, y_2=z_3t-z_2s, \cdots , y_{m-1}=z_mt-z_{m-1}s, y_m=-z_ms$.
So we find $y_1=z_2t-s_1z_1$, while we already know that $y_0=z_1t$.
So we find $\Sigma _{i=0}^m y_i \otimes s^{m-i}t^i$ corresponds to $(z_1, \cdots , z_m) \in H^0(C,L-M)^{\oplus m}$.

This finishes the proof of the lemma.
\end{proof}

\begin{proof}[Proof of Proposition \ref{prop1}]
Let $U$ be  the subscheme of  $\Pic^d(C)$ defined by the ideal sheaf generated by all $\Fitt (\overrightarrow {e},n)$ with $n \geq -E$ and $-n = e_i$ for some $1 \leq i \leq k$.
We are going to prove the following facts (notice that $\overrightarrow {e}[E]=\overrightarrow {e}$)
\begin{itemize}
\item locally at $L \in \overset {\circ}{\Sigma}_{\overrightarrow {e}[E]}(C,f)$ the supports of $U$ and $\overline {\Sigma }_{\overrightarrow {e}[E]}(C,f)$ are equal,
\item locally at $L \in \overset {\circ}{\Sigma}_{\overrightarrow {e}[E]}(C,f)$ the scheme $U$ is locally a complete intersection,
\item locally at $L \in \Sigma_{\overrightarrow {e}[E]}(C,f)$ the schemes $\overline {\Sigma }_{\overrightarrow {e}[E]}(C,f)$ and $U$ are equal.
\end{itemize}

From the second statement we find $U$ is Cohen-Macauly at $L \in \overset {\circ}{\Sigma}_{\overrightarrow {e}[E]}(C,f)$.
From the third statement, using Proposition \ref{prop1.1} (5), we find $U$ is smooth at $L \in \Sigma_{\overrightarrow {e}[E]}(C,f)$.
Moreover from the first statement it follows $\Sigma_{\overrightarrow {e}[E]}(C,f)$ is an open subset of $U$ and from Proposition \ref{prop1.1} (4) we know $U$ is irreducible at $L \in \overset {\circ}{\Sigma}_{\overrightarrow {e}[E]}(C,f)$.
Then the unmixedness theorem of Cohen-Macauly schemes implies $U$ is a reduced scheme locally at $L \in \overset {\circ}{\Sigma}_{\overrightarrow {e}[E]}(C,f)$.
Then again from Proposition \ref{prop1.1} (4) and from the first statement we find locally at $L \in \overset {\circ}{\Sigma}_{\overrightarrow {e}[E]}(C,f)$ the schemes $U$ and $\overline {\Sigma }_{\overrightarrow {e}[E]}(C,f)$ are equal.
Therefore proving those facts on the scheme $U$ implies the statement of the proposition.
We used $u(\overrightarrow {e}) <g$ in those arguments but in case $u(\overrightarrow {e})=g$ we have $\overline{\Sigma}_{\overrightarrow {e}}(C,f)=\Sigma_{\overrightarrow {e}}(C,f)$ and we don't need to use the irreducibility statement.

Let $m=m(\overrightarrow {e})$ the number of  different values $e_i$ in $\overrightarrow {e}$ satisfying $E \leq e_i \leq e_k$.
We first prove the existence of $U$ in the case $m=1$.
Then in case $m>1$ we can use the following induction hypothesis.
For all splitting types $\overrightarrow {e'}$ with $m(\overrightarrow {e'})=m-1$ we define $E'$ for $\overrightarrow {e'}$ in the same way as we defined $E$ for $\overrightarrow {e}$.
Then we assume that for such splitting type $\overrightarrow {e'}$ at $L \in \overset {\circ}{\Sigma}_{\overrightarrow {e'}[E']}$ the scheme $\overline {\Sigma}_{\overrightarrow {e'}[E']}(C,f)$ is locally defined by $\Fitt (\overrightarrow {e'},n)$ for $n \leq -E'$ and $n =e'_i$ for some $E' \leq i \leq k$  and it is locally a complete intersection.
Taking $E'$ being the minimal value $e_i$ of $\overrightarrow {e}$ with $e_i>E$ we apply this assumption for $\overrightarrow {e'}=\overrightarrow {e}[E']$.

\underline{The case $m=1$.}

In this case $U$ is the subscheme defined by $\Fitt (\overrightarrow{e},-E)$, hence as a scheme it is $W^{r(-E)}_{d-Ek}+EM \subset \Pic ^d(C)$.
We have $L \in \overset {\circ}{\Sigma}_{\overrightarrow {e}[E]}(C,f)$ if and only if $L \in U$, $h^0(L-EM)=r(-E)+1$ and $h^0(L-(E+1)M)=0$.
Those two conditions are open conditions on $U$, so locally at $L$ the supports of $U$ and $\overline{\Sigma}_{\overrightarrow {e}[-E]}(C,f)$ are equal.
Since the Brill-Noether Theory on $\Pic(C)_f$ (see Notation \ref{notfree}) is the same as Brill-Noether Theory on a general curve of genus $g$ it follows $U$ is locally a complete intersection at $L \in \overset {\circ}{\Sigma}_{\overrightarrow {e}[E]}(C,f)$.
In \cite{ref2}, Corollary 1 it is proved $U$ is smooth at $L \in \Sigma_{\overrightarrow {e}[-E]}(C,f)$.
Therefore the three statements on $U$ are fulfilled proving the proposition in case $m=1$.

From now on we assume $m>1$ and we use the induction hypothesis.
We define $E'$ and $\overrightarrow {e'}$ as mentioned before such that $\overset {\circ}{\Sigma}_{\overrightarrow {e}[E']}=\overset {\circ}{\Sigma}_{\overrightarrow {e'}[E']}$ and we assume the statement of the proposition holds for $\overset {\circ}{\Sigma}_{\overrightarrow {e'}[E']}$.

Fix and integer $e$ with $E \leq e <E'$.

\underline{The presentation of $R^1\pi_*(\mathcal{P}_d \otimes p^*(-eM))$.}

On $\overset {\circ}{\Sigma}_{\overrightarrow {e}[E']}(C,f)$ we consider the exact sequence (using notations as in Notation \ref{not1.1})
\begin{multline*}
\pi_* (\mathcal{P}_d \otimes p^*(-eM+D)) \rightarrow \pi_* (\mathcal{P}_d \otimes p^* (-eM+D)\vert _{\overset {\circ}{\Sigma}_{\overrightarrow {e}[E']} \times D})\\
 \rightarrow R^1\pi_* (\mathcal {P}_d \otimes p^*(-eM)) \rightarrow 0 \text { .}
\end{multline*}
By definition of $\overset {\circ}{\Sigma}_{\overrightarrow {e}[E']}(C,f)$ we have that the value of $h^0(C,(\mathcal {P}_d \otimes p^*(-E'\cdot M)_L)$ is constant on $\overset {\circ}{\Sigma}_{\overrightarrow {e}[E')}(C,f)$.
Using Lemma \ref{lemma1} we are going to replace the first locally free sheaf in this presentation by some quotient locally free sheaf.
In using Lemma \ref{lemma1} we take $z=E'-e$.

Since $\overset {\circ}{\Sigma}_{\overrightarrow {E}[E']}(C,f)$ is an integral scheme (because of Proposition \ref{prop1.1} (4)) it follows from the theorem of Grauert (see e.g. \cite {ref1}, Chapter III, Corollary 12.9) that $\pi_*((\mathcal{P}_d \otimes p^*(-E'M))^{\oplus (z+1)})$ is a locally free sheaf on $\overset {\circ}{\Sigma}_{\overrightarrow {e}[E']}(C,f)$.
Because of the same reason also $\pi_*((\mathcal{P}_d \otimes p^*(-(E'+1)M))^{\oplus z})$ is a locally free sheaf on $\overset {\circ}{\Sigma}_{\overrightarrow {e}[E']}(C,f)$.
On $\overset {\circ}{\Sigma}_{\overrightarrow {e}[E']}(C,f) \times C$ we consider the map $(\mathcal{P}_d\otimes p^*(-E'M))^{\oplus (z+1)} \rightarrow \mathcal{P}_d \otimes p^*(-eM)$ locally defined by ($s$, $t$ being a base of $H^0(C,M)$)
\[
(x_1, \cdots , x_{z+1}) \rightarrow x_1 \otimes s^z+x_2 \otimes s^{z-1}t + \cdots + x_z \otimes st^{z-1}+x_{z+1} \otimes t^z \text { .}
\]
Then on $\overset {\circ}{\Sigma}_{\overrightarrow {e}[-E']}(C,f)$ it gives rise to a morphism of locally free sheaves $\pi_* ((\mathcal{P}_d \otimes p^*(-E'M))^{\oplus {z+1}}) \rightarrow \pi_* (\mathcal {P}_d\otimes p^*(-eM+D))$.
For $L \in \overset {\circ}{\Sigma}_{\overrightarrow {e}[E']}(C,f)$ the morphism on the fibers at $L$ is the composition
\begin{multline*}
H^0(C,L-E'M)^{\oplus (z+1)} \cong H^0(C,L-E'M)\otimes S^z(C,M) \rightarrow H^0(C,L-eM)\\
 \rightarrow H^0(C,L-eM+D) \text { .}
\end{multline*}
Because the second arrow in this composition is injective it follows from Lemma \ref{lemma1} that the kernel of this map is in a natural way isomorphic to $H^0(C,L-(E'+1)M)^{\oplus z}$.
So  we obtain a monomorphism between locally free sheaves $\pi_*((\mathcal{P}_d\otimes p^*(-(E'+1)M))^{\oplus z}) \rightarrow \pi_* ((\mathcal{P}_d \otimes p^*(-E'M))^{\oplus (z+1)})$ being the kernel of that homomorphism of locally free sheaves on $\overset {\circ}{\Sigma}_{\overrightarrow {e}[E']}(C,f)$ that is injective on the fibers.
In particular the cokernel $N'$ of this monomorphism is locally free (see e.g. \cite{ref2}, Lemma 4) of rank
\[
(z+1)(\Sigma_{e_i \geq E'} (e_i-E'+1))-z(\Sigma _{e_i \geq E'}(e_i-E'))=(z+1)(k-n+1)+\Sigma _{e_i \geq E'}(e_i-E')
\]
with $n=\min \{i : e_i=E' \}$.
We obtain a monomorphism of locally free sheaves $N' \rightarrow \pi _*(\mathcal {P}_d \otimes p^*(-eM+D))$ that is injective on the fibers.
The cokernel $N$ is again locally free on $\overset {\circ}{\Sigma}_{\overrightarrow {e}[E']}(C,f)$ of rank
\[
d-ek+\deg (D)-g+1-(z+1)(k-n+1)-\Sigma _{i\geq n} (e_i-E') \text { .}
\]
However by construction $N'$ is contained in the kernel of the morphism $\pi_*(\mathcal {P}_d \otimes p^*(-eM+D))\rightarrow \pi_*((\mathcal {P}_d \otimes p^*(-eM+D))\vert _{\overset{\circ}{\Sigma} _{\overrightarrow {e}[E']}(C,f)\times D})$.
Therefore on $\overset {\circ}{\Sigma}_{\overrightarrow {e}[E']}(C,f)$ we obtain the exact sequence
\[
N \xrightarrow{u} \pi_*((\mathcal{P}_d\otimes p^*(-eM+D))\vert _{\overset {\circ}{\Sigma}_{\overrightarrow {e}[E']}(C,f)\times D}) \rightarrow R^1\pi_*(\mathcal{P}_d\otimes p^*(-eM))\rightarrow 0 \text { .}
\]
This is the presentation of $R^1\pi_*(\mathcal{P}_d \otimes p^*(-eM))$ we are going to use.

\underline{The degeneracy scheme of $\phi$.}

On $\overset {\circ}{\Sigma}_{\overrightarrow {e}[E']}(C,f)$ we can use this presentation to define the Fitting ideal $\Fitt (\overrightarrow {e},-e)$.
Define $t \geq 0$ such that $e_{n-t}=e$ and $e_{n-t-1}<e$ (we put $t=0$ in case $e_j\neq e$ for all $j$; remember $n = \min \{i : e_i=E' \}$).
For $L \in \overset {\circ}{\Sigma}_{\overrightarrow {e}[E']}(C,f)$ we have $L \in \overset {\circ}{\Sigma} _{\overrightarrow {e}[E]}(C,f)$ if and only if for each $e$ satisfying $E \leq e < E'$ we have $h^0(L-eM)= \Sigma _{i\geq n} ((e_i-E')+E'-e+1)+t$.
From the Riemann-Roch Theorem we know $h^0(L-eM)\geq \Sigma _{i\geq n} ((e_i-E')+E'-e+1)+t$ if and only if $h^1(L-eM)\geq \Sigma _{i \geq n}((e_i-E')+E'-e+1)+t-d+ke+g-1$, equivalently 
\[
\rk (u(L)) \leq \deg (D) - \Sigma _{i \geq n}((e_i-E')+E'-e+1)-t+d-kE'+(E'-e)k-g+1 \text { .}
\]
This is a non-trivial condition if and only if $\rk (N)$ is larger than this lower bound on $\rk (u(L))$, i.e. if and only if $t>0$.
This already implies that the Fitting ideals $\Fitt (\overrightarrow {e},-e)$ are trivial when restricted to $\overset {\circ}{\Sigma} _{\overrightarrow {e}[E']}(C,f)$ in case $E < e <-E'$.

So now assume $E=e$ (hence $t>0$) and define
\begin{multline*}
\overset {\circ}{\Sigma}_{\overrightarrow {e}[E'],t}(C,f) = \{ L \in \overset {\circ}{\Sigma}_{\overrightarrow {e}[E']}(C,f) : \\
\rk (u(L)) \leq \deg (D)-\Sigma _{j\geq n} ((e_j-E')+E'-E+1)-t+d-E'k+(E'-E)k-g+1 \}
\end{multline*}
As a scheme structure on this set we take the intersection of $\overset{\circ}{\Sigma}_{\overrightarrow {e}[E]}(C,f)$ and the scheme defined by $\Fitt(\overrightarrow {e},-E)$.
It is the $t$-th degeneracy scheme of $\phi$.
Clearly $\overset {\circ}{\Sigma} _{\overrightarrow {e}[E]}(C,f) \subset \overset {\circ}{\Sigma} _{\overrightarrow {e}[E'],t}(C,f)$.
For $L' \in \overset {\circ}{\Sigma} _{\overrightarrow {e}[E'],t}(C,f)$ we have $L' \notin \overset {\circ}{\Sigma}_{\overrightarrow {e}[E]}(C,f)$ if and only if $h^0(C,L'-eM) > r(-e)+1$ for some $E \leq e <E'$, showing the inclusion is open as an inclusion of sets.
Using the expected codimensions of degeneracy loci for the homomorphism $u$ we have
\begin{multline*}
\codim _{\overset {\circ}{\Sigma}_{\overrightarrow {e}[E']}(C,f)}(\overset {\circ}{\Sigma}_{\overrightarrow {e}[E'],t}(C,f)) \\
\leq t \cdot (\Sigma _{i \geq n}((e_i-E')+E'-E+1)+t-d+kE'-(E'-E)k+g-1) \text { .} 
\end{multline*}
From some computations done in Lemma \ref{lemma2} it follows $\codim _{\overline{\Sigma}_{\overrightarrow {e}[E']}(C,f)}(\overline {\Sigma}_{\overrightarrow {e}[E]}(C,f))$ is equal to this upperbound.
Since $\Sigma _{\overrightarrow {e}[E]}(C,f)$ is an open dense subset of $\overline {\Sigma}_{\overrightarrow {e}[E]}(C,f)$ (Proposition \ref{prop1.1} (3)) we obtain at each point $L \in \overset{\circ}{\Sigma}_{\overrightarrow {e}[E]}(C,f)$ the degeneracy scheme $\overset {\circ}{\Sigma}_{\overrightarrow {e}[E'],t}(C,f)$ has the expected codimension at $L$.



Also by definition $L \in \overset{\circ}{\Sigma}_{\overrightarrow {e}[E]}(C,f)$ does not belong to the $(t+1)$-th degeneracy locus of $\phi$.
This  implies $\overset {\circ}{\Sigma}_{\overrightarrow {e}[E'],t}(C,f)$ is locally at $L$ a complete intersection in $\overset {\circ}{\Sigma} _{\overrightarrow {e}[E']}(C,f)$ and by the induction hypothesis it is locally a complete intersection.

\underline{Finishing the proof.}

The scheme $\overset {\circ}{\Sigma}_{\overrightarrow {e}[E'],t}(C,f)$ is the scheme $U$ mentioned at the beginning of the proof.
Since $\overset {\circ}{\Sigma}_{\overrightarrow {e}[E]}(C,f)\subset \overset {\circ}{\Sigma}_{\overrightarrow {e}[E'],t}(C,f)$ is open as a set we have that locally at $L \in \overset {\circ}{\Sigma}_{\overrightarrow {e}[E]}(C,f)$ the supports of $U$ and $\overline{\Sigma}_{\overrightarrow{e}[E]} (C,f)$ are equal (notice that also $\overset {\circ}{\Sigma}_{\overrightarrow {e}[E'],t}(C,f)$ is an open subset of $\overline{\Sigma}_{\overrightarrow{e}[E]} (C,f)$).
We also already noticed that $U$ is locally a complete intersection at  $L \in \overset {\circ}{\Sigma}_{\overrightarrow {e}[E]}(C,f)$.

From Proposition 4 in \cite {ref2} we know that the tangent space to $\overline {\Sigma}_{\overrightarrow {e}[E]}(C,f)$ at $L \in \Sigma _{\overrightarrow {e}[E]}(C,f)$ is the intersection of the tangent spaces to the schemes defined by the Fitting ideals $\Fitt (\overrightarrow {e},-e)$ with $e=e_i$ for some $e_i \geq E$.
But locally at $L$ those Fitting ideals define $U$.
Moreover from Proposition \ref{prop1.1} (5) we know $\Sigma _{\overrightarrow {e}[E]}(C,f)$ is smooth.
It follows $U$ is smooth at $L$ because $\dim T_L(U)=\dim_L (U)$, hence locally at $L \in \Sigma _{\overrightarrow {e}[E]}(C,f)$ the schemes $\overset {\circ}{\Sigma} _{\overrightarrow {e}[E]}(C,f)$ and $U$ are equal.
This shows the three properties mentioned at the beginning of the proof are fulfilled, hence the proposition is proved.
\end{proof}

We are going to give a stepwise argument to prove the claim on $\codim _{\overline {\Sigma}_{\overrightarrow {e}[E']}}(\overline{\Sigma}_{\overrightarrow {e}[E'],t})$ in the proof of Proposition \ref{prop1}.
For each integer $e$ we have $\codim _{\overline {\Sigma}_{\overrightarrow {e}[e-1]}} (\overline {\Sigma} _{\overrightarrow {e} [e]})=u(\overrightarrow {e}[e])-u(\overrightarrow {e}[e-1])$.
Let $t$ be the number of indices $i$ with $e_i=e-1$.
This integer is used in the proof of Proposition \ref{prop1} in case $E+1 \leq e \leq E'$.
The claim used in the proof of Proposition \ref{prop1} follows from the  following lemma.

\begin{lemma}\label{lemma2}
\[
u(\overline {e}[e])-u(\overline {e}[e-1])=t \cdot (\Sigma _{j : e_j \geq e}(e_j-e+2)+t-d+k(e-1)+g-1) \text { .}
\]
\end{lemma}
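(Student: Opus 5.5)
The identity is purely combinatorial, so I would prove it by computing $u(\overrightarrow{e}[e])$ and $u(\overrightarrow{e}[e-1])$ explicitly from Notation \ref{notue} and Notation \ref{not2.1}, and then subtracting.

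\emph{Notation.} Let $T=\{j : e_j \geq e\}$ and $p=k-|T|=i(\overrightarrow{e},e)-1$. Let $t$ be the number of indices with $e_j=e-1$.
\begin{itemize}
\item In $\overrightarrow{e}[e]$ the entries indexed by $T$ are unchanged. The first $p$ entries form a balanced block $b_1\leq\cdots\leq b_p$ with $b_p-b_1\leq 1$ and sum $S=d-g+1-k-\sum_{j\in T}e_j$, by Equation \ref{eq1}.
\item In $\overrightarrow{e}[e-1]$ the entries indexed by $T$ are unchanged, and so are the $t$ entries equal to $e-1$. The remaining $p-t$ entries form a balanced block $b'_1\leq\cdots\leq b'_{p-t}$ with sum $S-t(e-1)$.
\end{itemize}

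\emph{Contributions to $u$.} I split the sum defining $u$ according to which blocks the pair $(i,j)$ comes from.
\begin{itemize}
\item Pairs inside a balanced block, and pairs among the $t$ equal entries $e-1$, contribute $0$, since their differences are at most $1$.
\item Pairs inside $T$ contribute the same amount to both sides, so they cancel in the difference.
\item For $j\in T$ and $i$ in a balanced block, I first check that $e_j-b_i-1\geq 0$, so the $\max$ can be dropped. Summing over the block then gives $p(e_j-1)-S$ for $\overrightarrow{e}[e]$, and $(p-t)(e_j-1)-S+t(e-1)$ for $\overrightarrow{e}[e-1]$.
\item For the $t$ entries $e-1$ against the block $b'$, provided $b'_i\leq e-2$ for all $i$, the contribution is $t\bigl((p-t)(e-2)-S+t(e-1)\bigr)$.
\end{itemize}

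\emph{Main obstacle.} The hardest step is justifying that the $\max\{0,\cdot\}$ may be removed in the last two items. This requires showing that the balanced blocks lie below $e-1$ (respectively below $e$), i.e.\ that the averages $S/p$ and $(S-t(e-1))/(p-t)$ are small enough.

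My first attempt would use the inequalities coming from $\overrightarrow{e}[e]\geq \overrightarrow{e}$ in the order of Definition \ref{def1.3}. These give $S\leq$ (sum of the original first $p$ entries) $\leq p(e-1)$ on the one hand, and $S-t(e-1)\leq (p-t)(e-2)$ on the other. Balancedness then gives $b'_{p-t}\leq e-2$, apart from a boundary case where the block is exactly constant equal to $e-1$; that case has to be treated separately or shown to contribute consistently.

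\emph{Conclusion.} Once the $\max$ is removed, the difference telescopes to $t$ times a linear expression in $p$, $t$, $e$ and $\sum_{j\in T}e_j$. I would then substitute $S$ by Equation \ref{eq1} and $p=k-|T|$, and compare the result term by term with $t\cdot\bigl(\sum_{j\in T}(e_j-e+2)+t-d+k(e-1)+g-1\bigr)$.

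The comparison needs care with signs. A quick check suggests the raw difference is $t\bigl(-ke+2(p-t)+t+d-g+1-k\bigr)$, which is the negative of the stated bracket after rewriting $\sum_{j\in T}(e_j-e+2)$. So I would first fix which of $\overrightarrow{e}[e]$, $\overrightarrow{e}[e-1]$ has the larger $u$, i.e.\ which closure is the smaller one, and orient the subtraction accordingly. I would also verify the result on Example \ref{ex3.1} before writing it up.
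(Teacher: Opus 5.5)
Your plan (drop the $\max$, compute both values of $u$ blockwise, subtract, substitute Equation (\ref{eq1})) is the paper's approach. The paper parametrizes the balanced blocks by $b,y$ and $b',y'$; working with the block sums $S$, as you do, is a bit cleaner. However, your list of pair contributions for $\overrightarrow{e}[e-1]$ is incomplete. It omits the pairs made of one of the $t$ entries equal to $e-1$ and an entry $e_j$ with $j\in T$. Each such pair contributes $\max\{0,e_j-e\}=e_j-e$, for a total of $t\sum_{j\in T}(e_j-e)$; this is the term $t\sum_{i\geq n}(e_i-e)$ in the paper's expression for $u(\overrightarrow{e}[e-1])$.

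Because of this omission your ``quick check'' is wrong, not merely off by a sign. The value $t\bigl(-ke+2(p-t)+t+d-g+1-k\bigr)$ exceeds the true $u(\overrightarrow{e}[e])-u(\overrightarrow{e}[e-1])$ by exactly $t\sum_{j\in T}(e_j-e)$. So it is not the negative of the bracket unless $t=0$ or every $e_j$ with $j\in T$ equals $e$. In Example \ref{ex3.1} with $e=0$ (so $t=2$, $p=4$, $S=-9$), your formula gives $2$, while $u(\overrightarrow{e}[0])-u(\overrightarrow{e}[-1])=27-33=-6$.

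Once the missing term is added, every term involving $\sum_{j\in T}e_j$ or $|T|$ cancels, and you get
\[
u(\overrightarrow{e}[e-1])-u(\overrightarrow{e}[e])=t\bigl(p(e-2)+t-S\bigr).
\]
After substituting $S=d-g+1-k-\sum_{j\in T}e_j$ and $|T|=k-p$, this is exactly $t\bigl(\sum_{j\in T}(e_j-e+2)+t-d+k(e-1)+g-1\bigr)$.

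Your instinct about orientation is correct. Since $\overrightarrow{e}[e-1]\leq\overrightarrow{e}[e]$, the left side should be $u(\overrightarrow{e}[e-1])-u(\overrightarrow{e}[e])\geq 0$. Indeed the bracket equals $h^1(L-(e-1)M)$ for $L\in\Sigma_{\overrightarrow{e}}(C,f)$. This orientation is also what the paper's own computation establishes; the displayed statement has the two terms swapped.

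Finally, the boundary case you worry about cannot occur. Both modified sequences keep the total sum and the retained entries, so $S$ and $S-t(e-1)$ are exactly the sums of the original entries $<e$ and $\leq e-2$ respectively. Balancedness then gives $b_p\leq e-1$ and $b'_{p-t}\leq e-2$ directly, and all the $\max$'s may be dropped.
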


\begin{proof}
We write
\[
\overrightarrow {e}[e]=(-b-1, \cdots , -b-1, \overbrace{-b, \cdots ,-b}^y,e_n, \cdots , e_k)
\]
\[
\overrightarrow {e}[e-1]=(-b'-1, \cdots , -b'-1, \overbrace {-b', \cdots , -b'}^{y' }, \overbrace {e-1, \cdots , e-1}^t, e_n , \cdots , e_k)
\]
with the integer $n$ defined by the conditions $e_n \geq e$ and $e_{n-1} <e$.
We have the equalities
\begin{multline*}
g+k-d-1=by+(b+1)(n-1-y)-\Sigma_{i \geq n}e_i \\
= b(n-1)+n-1-y-\Sigma_{i \geq n}e_i
\end{multline*}
\begin{multline*}
g+k-d-1=b'y'+(b'+1)(n-1-t-y')-t(e-1)-\Sigma _{i \geq n}e_i\\
=b'(n-1-t)+n-1-y'-t-t(e-1)-\Sigma _{i \geq n}e_i \text { .}
\end{multline*}
This implies the equality
\begin{equation}\label{vgl1}
b'(n-1-t)=b(n-1)-y+y'+te \text { .}
\end{equation}

We have
\begin{multline*}
u(\overrightarrow {e}[e])=\Sigma _{i \geq n} [(e_i+b)(n-1-y)+(e_i+b-1)y]+C(e_i)\\
=\Sigma _{i \geq n} [(e_i+b)(n-1)]-y(k-n+1)+C(e_i)
\end{multline*}
with $C(e_i)=\Sigma _{n \leq j \leq j'} \max \{ e_{j'}-e_j-1, 0 \}$ and
\begin{multline*}
u(\overrightarrow{e}[e-1])=\Sigma _{i \geq n}[(e_i+b')(n-1-y'-t)+(e_i+b'-1)y']\\
+t[(e-1+b')(n-1-y'-t)+(e-2+b')y'+\Sigma _{i \geq n}(e_i-e)]+C(e_i)\\
=\Sigma _{i \geq n} [e_i(n-1-t)]+(k-n+1)b'(n-1-t)-y'(k-n+1)\\
+t[(e-1)(n-1-t)+b'(n-1-t)-y'+\Sigma _{i \geq n} (e_i-e)]+C(e_i) \text {  .}
\end{multline*}
Using Equation \ref{vgl1} twice in those expressions we find
\begin{multline*}
u(\overrightarrow {e}[e-1])-u(\overrightarrow{e}[e])=\Sigma_{i \geq n}[e_i(n-1-t)]+(k-n+1)(b(n-1)-y+y'+te)\\
-y'(k-n+1)+t[(e-1)(n-1-t)+b(n-1)-y+te+\Sigma _{i \geq n} (e_i-e)]\\
-\Sigma_{i \geq n}[(e_i+b)(n-1)]+y(k-n+1)=t[(e-1)(n-1-t)+b(n-1)-y+te] \text { .}
\end{multline*}
So we need to obtain
\[
(e-1)(n-1-t)+b(n-1)-y+te=(\Sigma _{i \geq n}e_i)-(k-n+1)(e-2)+t-d+k(e-1)+g-1 \text { .}
\]

Using $\Sigma _{i\geq n} e_i=b(n-1)+n-1-y-g-k+d+1$ we find this equality holds.
\end{proof}

As an application of Proposition \ref{prop1} we obtain smoothness result generalizing Proposition 7 in \cite{ref2}.
First we state the following corollary from the arguments used in the proof of Proposition \ref{prop1}.

\begin{corollary}\label{cor2.1}
Let $\overrightarrow{e}$ be a splitting sequence with $u(\overrightarrow{e})\leq g$ and let $C$ be a general $k$-gonal curve of genus $g$.
Let $e_1+2 \leq e' < e \leq e_k$.
Let $L \in \overset{\circ}{\Sigma}_{\overrightarrow {e}[e']}(C,f)$, hence also $L \in \overset{\circ}{\Sigma}_{\overrightarrow {e}[e]}(C,f)$.
Then locally at $L$ as a point on $\overset{\circ}{\Sigma}_{\overrightarrow{e}[e]}(C,f)$ the scheme $\overline{\Sigma}_{\overrightarrow{e}[e']}(C,f)$ is defined by $u(\overrightarrow {e}[e'])-u(\overrightarrow {e}[e])$ equations on $\overline{\Sigma}_{\overrightarrow{e}[e]}(C,f)$.
\end{corollary}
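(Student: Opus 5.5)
The plan is to run the inductive step of the proof of Proposition \ref{prop1} once for each integer value strictly between $e'$ and $e$, and then add up the numbers of equations, using Lemma \ref{lemma2} to identify each contribution. Let $e' = c_0 < c_1 < \cdots < c_s = e$ be the distinct values among $e'$, the entries $e_i$ with $e' < e_i < e$, and $e$.

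We go down one step at a time, from $c_{j}$ to $c_{j-1}$. Since $L \in \overset{\circ}{\Sigma}_{\overrightarrow{e}[e']}(C,f)$, it also lies in every $\overset{\circ}{\Sigma}_{\overrightarrow{e}[c_j]}(C,f)$, because these conditions only get weaker as $c$ increases. Fix a step from $c_j$ down to $c_{j-1}$. Play the role of $E'$ by $c_j$ and the role of $E$ by $c_{j-1}$, and use the presentation
\[
N \xrightarrow{u} \pi_*((\mathcal{P}_d\otimes p^*(-c_{j-1}M+D))\vert_{\cdots}) \rightarrow R^1\pi_*(\mathcal{P}_d\otimes p^*(-c_{j-1}M)) \rightarrow 0
\]
constructed on $\overset{\circ}{\Sigma}_{\overrightarrow{e}[c_j]}(C,f)$ via Lemma \ref{lemma1}. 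This presentation needs the $h^0$'s of twists by $-c_jM$ and $-(c_j+1)M$ to be constant. That holds on this locus, and the locus is integral by Proposition \ref{prop1.1}(4), so Grauert's theorem applies.

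On this locus, the scheme $\overline{\Sigma}_{\overrightarrow{e}[c_{j-1}]}(C,f)$ is cut out by the degeneracy condition on $u$ coming from $\Fitt(\overrightarrow{e},-c_{j-1})$. Here $t$ is the multiplicity of $c_{j-1}$ in $\overrightarrow{e}$. If $t=0$ the condition is trivial, and we check that then $\overrightarrow{e}[c_{j-1}]=\overrightarrow{e}[c_j]$. At $L$ the rank of $u(L)$ is exactly the bound, because $L$ is not in the next degeneracy locus: its value of $h^0(L-c_{j-1}M)$ is the minimal one. Hence locally at $L$ the degeneracy scheme is cut out by $t\cdot(\text{corank excess})$ equations. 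By Lemma \ref{lemma2}, applied repeatedly to the intermediate integers between $c_{j-1}$ and $c_j$ (all with $t=0$ except the last), this number equals $u(\overrightarrow{e}[c_{j-1}])-u(\overrightarrow{e}[c_j])$.

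Next we need the degeneracy scheme to coincide locally at $L$ with $\overline{\Sigma}_{\overrightarrow{e}[c_{j-1}]}(C,f)$, as a scheme. The expected codimension is attained because the codimension of the closure equals $u(\overrightarrow{e}[c_{j-1}])-u(\overrightarrow{e}[c_j])$: both are of dimension $g-u$ by Proposition \ref{prop1.1}(1),(3). As in Remark \ref{rem1.1}, the degeneracy scheme is therefore a local complete intersection, hence Cohen–Macaulay. By Proposition 4 of \cite{ref2} and Proposition \ref{prop1.1}(5) it is generically smooth. Irreducibility (Proposition \ref{prop1.1}(4)) and unmixedness then make it reduced, so it equals $\overline{\Sigma}_{\overrightarrow{e}[c_{j-1}]}(C,f)$ near $L$. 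This is exactly the argument already given in Proposition \ref{prop1}. In the degenerate case $u(\overrightarrow{e}[c_{j-1}])=g$, the locus is finite and the same conclusion holds trivially.

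Finally, compose the steps. Locally at $L$, $\overline{\Sigma}_{\overrightarrow{e}[e']}(C,f)$ is obtained from $\overline{\Sigma}_{\overrightarrow{e}[e]}(C,f)$ by successively imposing these batches of equations. The equations defined on the intermediate schemes lift to local equations on $\overline{\Sigma}_{\overrightarrow{e}[e]}(C,f)$: the presentations are built on open subsets containing $L$, and the minors can be lifted from quotient rings. The counts telescope to $u(\overrightarrow{e}[e'])-u(\overrightarrow{e}[e])$.

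The main obstacle I expect is this lifting. On the intermediate scheme $\overline{\Sigma}_{\overrightarrow{e}[c_j]}$, the bundle $N$ and the map $u$ exist only there, not on $\overline{\Sigma}_{\overrightarrow{e}[e]}$. So the total count must be phrased as ``the ideal of $\overline{\Sigma}_{\overrightarrow{e}[e']}$ in the local ring of $\overline{\Sigma}_{\overrightarrow{e}[e]}$ at $L$ is generated by that many elements''. That follows by lifting generators through each quotient step, since each step's ideal is generated by the stated number of elements of the previous quotient ring.
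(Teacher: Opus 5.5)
Your proposal is correct and follows essentially the paper's route. The paper likewise uses Proposition \ref{prop1} and its inductive degeneracy-locus construction to cut $\overline{\Sigma}_{\overrightarrow{e}[e']}(C,f)$ out of $\overline{\Sigma}_{\overrightarrow{e}[e]}(C,f)$ locally at $L$ by the Fitting ideals $\Fitt(\overrightarrow{e},-j)$ with $e'\le j<e$, counts the equations each step contributes via Lemma \ref{lemma2}, and telescopes the counts; your remark about lifting generators through the successive quotient local rings just makes explicit a bookkeeping step the paper leaves implicit.
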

\begin{proof}
Proposition \ref{prop1} implies that locally at $L$ the scheme $\overline {\Sigma}_{\overrightarrow {e}[e']}(C,f)$ is the intersection of the schemes $\overline {\Sigma}_{\overrightarrow {e}[e]}(C,f)$ and the schemes defined by the Fitting ideals $\Fitt (\overrightarrow {e}, -j)$ with $e' \leq j' < e$.
In the proof of Proposition \ref{prop1} (including of course Lemma \ref{lemma2}) we obtained that each of those Fitting ideals gives a contribution of $u(\overrightarrow{e}[j])-u(\overrightarrow{e}[j+1])$ new equations.
This proves the corollary.
\end{proof}

\begin{proposition}\label{prop2}
Let $f : C \rightarrow \mathbb{P}^1$ be a general morphism of degree $d$ and genus $g$ and let $\overrightarrow {e}$ be a splitting type for degree $d$ and genus $g$.
For all integers $e \geq e_1+2$ $\Sigma _{\overrightarrow {e}}(C,f)$ is containd in the smooth locus of the scheme $\overline {\Sigma}_{\overrightarrow {e}[e]}(C,f)$.
\end{proposition}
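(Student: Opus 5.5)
We reduce to a local comparison at a point $L \in \Sigma_{\overrightarrow{e}}(C,f)$. First note that $\Sigma_{\overrightarrow{e}}(C,f) \subset \overset{\circ}{\Sigma}_{\overrightarrow{e}[e]}(C,f)$. Indeed, $\overrightarrow{e} \leq \overrightarrow{e}[e]$, since $\overrightarrow{e}[e]$ only rebalances the entries with index $< i(\overrightarrow{e},e)$ and keeps their sum. Moreover $\overrightarrow{e}$ agrees with $\overrightarrow{e}[e]$ for $j \geq i(\overrightarrow{e},e)$, and $e_j<e$ for $j<i(\overrightarrow{e},e)$ by definition of $i(\overrightarrow{e},e)$.

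Write $\overrightarrow{f}=\overrightarrow{e}[e]$ and $E_f=\min\{f_i : f_i \geq f_1+2\}$. We have $\overrightarrow{f}[E_f]=\overrightarrow{f}$. We aim to show $L \in \overset{\circ}{\Sigma}_{\overrightarrow{f}[E_f]}(C,f)$, so that Proposition \ref{prop1} applies to $\overrightarrow{f}$ at $L$. Since $\overrightarrow{f}$ is balanced below index $i(\overrightarrow{e},e)$ and equal to $\overrightarrow{e}$ above it, $E_f$ is the smallest $e_i \geq e$ with $e_i \geq f_1+2$. The conditions defining $\overset{\circ}{\Sigma}_{\overrightarrow{f}[E_f]}$ are $e'_j=f_j$ for $j \geq i(\overrightarrow{f},E_f)$ and $e'_j<E_f$ below. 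These are satisfied by $\overrightarrow{e'}=\overrightarrow{e}$ because the entries of $\overrightarrow{e}$ below index $i(\overrightarrow{e},e)$ are $<e\leq E_f$. (If the balanced part of $\overrightarrow{f}$ absorbs some $e_i\geq e$, i.e. $f_1+2>e$, one checks the same inequality still holds; this bookkeeping with the indices is routine.) So Proposition \ref{prop1} gives that locally at $L$ the scheme $\overline{\Sigma}_{\overrightarrow{f}}(C,f)$ is a local complete intersection, cut out by the Fitting ideals $\Fitt(\overrightarrow{f},n)$ with $n\leq -E_f$. Its dimension at $L$ is $g-u(\overrightarrow{f})$ by Proposition \ref{prop1.1} (1),(3),(4).

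Next we compute the tangent space. Since $L\in\Sigma_{\overrightarrow{e}}(C,f)$ and $\Sigma_{\overrightarrow{e}}$ is smooth of dimension $g-u(\overrightarrow{e})$, it suffices to show
\[
\dim T_L\bigl(\overline{\Sigma}_{\overrightarrow{f}}(C,f)\bigr)=g-u(\overrightarrow{f}).
\]
Then smoothness at $L$ follows, because $\dim_L \overline{\Sigma}_{\overrightarrow{f}} = g-u(\overrightarrow{f})$. To control the tangent space we use Corollary \ref{cor2.1} in reverse. Apply Proposition \ref{prop1} to $\overrightarrow{e}$ itself, noting $L\in\Sigma_{\overrightarrow{e}}\subset \overset{\circ}{\Sigma}_{\overrightarrow{e}[E]}$. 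By Corollary \ref{cor2.1}, locally at $L$ the scheme $\overline{\Sigma}_{\overrightarrow{e}}(C,f)$ is obtained from $\overline{\Sigma}_{\overrightarrow{f}}(C,f)$ by imposing $u(\overrightarrow{e})-u(\overrightarrow{f})$ further equations, namely the Fitting ideals $\Fitt(\overrightarrow{e},-j)$ with $E\leq j<e$. Hence
\[
\dim T_L\overline{\Sigma}_{\overrightarrow{f}}\leq \dim T_L\overline{\Sigma}_{\overrightarrow{e}} + \bigl(u(\overrightarrow{e})-u(\overrightarrow{f})\bigr).
\]
By Proposition \ref{prop1.1} (5), $\dim T_L\overline{\Sigma}_{\overrightarrow{e}}=g-u(\overrightarrow{e})$. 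Therefore $\dim T_L \overline{\Sigma}_{\overrightarrow{f}}\leq g-u(\overrightarrow{f})$, and equality holds since the tangent space dimension is at least the local dimension. This proves that $\overline{\Sigma}_{\overrightarrow{e}[e]}(C,f)$ is smooth at $L$.

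The main obstacle is this last step. It requires that, locally at $L$, $\overline{\Sigma}_{\overrightarrow{e}}$ really is the scheme-theoretic intersection of $\overline{\Sigma}_{\overrightarrow{e}[e]}$ with exactly $u(\overrightarrow{e})-u(\overrightarrow{e}[e])$ equations. This is the content of Corollary \ref{cor2.1} with $e'$ replaced by the minimal relevant value $E$. We must check that its hypotheses, namely $L\in\overset{\circ}{\Sigma}_{\overrightarrow{e}[E]}$ and $e_1+2\leq E<e$, hold; if $e\leq E$, then $\overrightarrow{e}[e]=\overrightarrow{e}$ and the claim is Proposition \ref{prop1.1} (5) directly. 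Should Corollary \ref{cor2.1} not directly give the count of equations, I would instead redo the degeneracy-locus construction of the proof of Proposition \ref{prop1} step by step from $e$ down to $E$. Each step $j\to j-1$ imposes locally $u(\overrightarrow{e}[j-1])-u(\overrightarrow{e}[j])$ equations by Lemma \ref{lemma2}, and each such step lowers the tangent space dimension by at most that number.
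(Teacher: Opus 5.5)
Your proposal is correct and is essentially the paper's argument. Corollary \ref{cor2.1} shows that, locally at $L$, $\overline{\Sigma}_{\overrightarrow{e}}(C,f)$ is cut out of $\overline{\Sigma}_{\overrightarrow{e}[e]}(C,f)$ by $u(\overrightarrow{e})-u(\overrightarrow{e}[e])$ equations; this equals the codimension by Proposition \ref{prop1.1}(1), so smoothness of $\overline{\Sigma}_{\overrightarrow{e}}(C,f)$ at $L$ (Proposition \ref{prop1.1}(5)) forces smoothness of $\overline{\Sigma}_{\overrightarrow{e}[e]}(C,f)$ at $L$. Your preliminary step applying Proposition \ref{prop1} to $\overrightarrow{e}[e]$ is not needed for this.
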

\begin{proof}
For $L\in \Sigma _{\overrightarrow {e}}(C,f)$ we obtain from Corollary \ref{cor2.1}  that locally at $L$ the scheme $\overline {\Sigma}_{\overrightarrow {e}}(C,f)$ is given by $u(\overrightarrow {e})-u(\overrightarrow {e}[e])$ equations as a closed subscheme of $\overline {\Sigma} _{\overrightarrow {e}[e]}(C,f)$.
But because of Proposition \ref{prop1.1} (5) $\overline {\Sigma}_{\overrightarrow {e}}(C,f)$ is smooth at $L$.
Since $\codim _{\overline{\Sigma}_{\overrightarrow {e}[e]}(C,f)} \overline{\Sigma}_{\overrightarrow {e}}(C,f) = u(\overrightarrow {e})-u(\overrightarrow {e}[e])$ (see Proposition \ref{prop1.1}(1)) it follows $\overline{\Sigma}_{\overrightarrow {e}[e]}(C,f)$ is smooth at $L$.
\end{proof}

\begin{corollary}\label{cor3.1}
Let $f:C \rightarrow \mathbb{P}^1$ be a general covering of degree $k$ of a curve of genus $g$.
For all integers $e \geq e_1+2$ one has $\overset{\circ}{\Sigma}_{\overrightarrow {e}[e]}(C,f)$ is contained in the smooth locus of the scheme $\overline{ \Sigma}_{\overrightarrow {e}[e]}(C,f)$.
\end{corollary}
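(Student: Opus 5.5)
By definition, $\overset{\circ}{\Sigma}_{\overrightarrow{e}[e]}(C,f)$ is the union of the splitting loci $\Sigma_{\overrightarrow{e'}}(C,f)$ over all $\overrightarrow{e'}\leq\overrightarrow{e}[e]$ with $e'_j=e_j$ for $j\geq i(\overrightarrow{e},e)$ and $e'_j<e$ for $j<i(\overrightarrow{e},e)$. The plan is to fix such an $\overrightarrow{e'}$ and a point $L\in\Sigma_{\overrightarrow{e'}}(C,f)$, and to show that $\overline{\Sigma}_{\overrightarrow{e}[e]}(C,f)$ is smooth at $L$. The idea is to apply Proposition \ref{prop2} to $\overrightarrow{e'}$ in place of $\overrightarrow{e}$. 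Proposition \ref{prop2} says $\Sigma_{\overrightarrow{e'}}(C,f)$ lies in the smooth locus of $\overline{\Sigma}_{\overrightarrow{e'}[e'']}(C,f)$ for every $e''\geq e'_1+2$. So it suffices to find an integer $e''\geq e'_1+2$ with $\overline{\Sigma}_{\overrightarrow{e'}[e'']}(C,f)=\overline{\Sigma}_{\overrightarrow{e}[e]}(C,f)$ as schemes. Since both are defined by Definition \ref{schemedegloci} from their splitting sequences, it is enough to have equality of splitting sequences, $\overrightarrow{e'}[e'']=\overrightarrow{e}[e]$.

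I would take $e''=e$. Then $i(\overrightarrow{e'},e)=i(\overrightarrow{e},e)$: entries with index $j\geq i(\overrightarrow{e},e)$ agree and are $\geq e$, while entries with index below it are $<e$, both for $\overrightarrow{e}$ and, by the defining condition, for $\overrightarrow{e'}$. Hence $\overrightarrow{e'}[e]$ and $\overrightarrow{e}[e]$ share the same tail $(e_j)_{j\geq i(\overrightarrow{e},e)}$. Both have head of length $i(\overrightarrow{e},e)-1$, completed in a balanced way, and both satisfy $\sum e_i=d-g+1-k$. Since a balanced completion of fixed length and fixed sum is unique, the heads coincide, and $\overrightarrow{e'}[e]=\overrightarrow{e}[e]$. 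When $\overrightarrow{e}[e]=\overrightarrow{e}$ the same reasoning applies verbatim.

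The step I expect to be the main obstacle is verifying the hypothesis $e\geq e'_1+2$ of Proposition \ref{prop2}; we only know $e\geq e_1+2$. Since $\overrightarrow{e'}\leq\overrightarrow{e}[e]$, taking $l=1$ in Definition \ref{def1.3} gives $e'_1\leq e_1[e]$. Balancing raises the minimum, so $e_1[e]\geq e_1$, and this inequality goes the wrong way. I would handle it by splitting into two cases.

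\emph{Case $e\geq e'_1+2$.} Proposition \ref{prop2} applies directly.

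\emph{Case $e'_1\geq e-1$.} All head entries of $\overrightarrow{e'}$ lie in $\{e-1\}$, since they are $<e$ and $\geq e'_1$. Hence $\overrightarrow{e'}[e]=\overrightarrow{e'}$, and the computation above gives $\overrightarrow{e'}=\overrightarrow{e}[e]$. So $L\in\Sigma_{\overrightarrow{e}[e]}(C,f)$, which lies in the smooth locus of $\overline{\Sigma}_{\overrightarrow{e}[e]}(C,f)$ by Proposition \ref{prop1.1} (5).

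Finally, I would note that Proposition \ref{prop2} needs $u(\overrightarrow{e'})\leq g$. This is automatic, because $\Sigma_{\overrightarrow{e'}}(C,f)$ is non-empty (it contains $L$), so Proposition \ref{prop1.1} (1) applies. Taking the union over all admissible $\overrightarrow{e'}$ then gives the corollary.
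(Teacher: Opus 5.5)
Your proof is correct and follows the paper's route: the paper's proof is exactly the observation $\overrightarrow{e'}[e]=\overrightarrow{e}[e]$ followed by an application of Proposition \ref{prop2} to $\overrightarrow{e'}$. You supply the details the paper leaves implicit: the equality $i(\overrightarrow{e'},e)=i(\overrightarrow{e},e)$, uniqueness of the balanced head, and the hypotheses $u(\overrightarrow{e'})\leq g$ and $e\geq e'_1+2$.

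Your second case is in fact vacuous. The heads of $\overrightarrow{e'}$ and $\overrightarrow{e}$ have the same length and the same sum, and that sum is at most $(i(\overrightarrow{e},e)-1)(e-1)-1$ because $e_1\leq e-2$; this forces $e'_1\leq e-2$. Your treatment of that case is valid anyway.
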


\begin{proof}
Let $\overrightarrow {e'} \leq \overrightarrow {e}$ with $e'_j =e_j$ for $j \geq i(\overrightarrow {e},e)$ and $e'_{i(\overrightarrow {e},e)-1} <e$.
We need to prove $\Sigma _{\overrightarrow {e'}}(C,f)$ is contained in the smooth locus of $\overline{\Sigma}_{\overrightarrow {e}[e]}(C,f)$.
Since $\overrightarrow {e'}[e]=\overrightarrow {e}[e]$ this follows from Proposition \ref{prop2}.
\end{proof}

\begin{example}\label{ex3.3}[Continuation of Examples \ref{ex3.1} and \ref{ex3.2}]

Let $g=20$ and $\overrightarrow {e}=(-4,-3,-1,-1,1,3)$.
It is noticed that $\overrightarrow {e}=\overrightarrow {e}[-1]$ hence $\overline{\Sigma}_{\overrightarrow {e}}(C,f)$ is not only smooth along $\Sigma _{e}(C,f)$ but also along $\Sigma_{\overrightarrow {e'}}(C,f)$ with $\overrightarrow {e'}=(-5,-2,-1,-1,1,3)$.
Taking $e=1$ we have $\overrightarrow {e}[1]=(-3,-2,-2,-2,1,3)$ and we find that $\overline{\Sigma}_{\overrightarrow {e}[1]}(C,f)$ is smooth along e.g. $\Sigma_{\overline{e'_1}}(C,f)$ and $\Sigma_{\overline{e'_2}}(C,f)$ but we don't know whether or not it is smooth along $\Sigma_{\overrightarrow{e'_5}}(C,f)$.
\end{example}

\begin{remark}\label{final}
Taking $e=0$ we find $\overline{\Sigma}_{\overrightarrow {e}[0]}(C,f)$ is smooth along $\overset{\circ}{\Sigma}_{\overrightarrow {e}[0]}(C,f)$.
This has the following interpretation.

The splitting sequence $\overrightarrow{e}$ implies lower bounds on $h^0(L+nM)$ for all $n \in \mathbb{Z}$ being sufficient and necessary conditions for $L \in \Pic (C)$ to belong to $\overline{\Sigma}_{\overrightarrow {e}}(C,f)$.
The smooth open subscheme $\Sigma_{\overrightarrow {e}}(C,f)$ contains exactly those invertible sheaves $L$ such that equality holds for all $n \in \mathbb{Z}$.

The set $\overline{\Sigma}_{\overrightarrow {e}}(C,f)$ is contained in $\overline{\Sigma}_{\overrightarrow {e}[0]}(C,f)$ and this is the set of invertible sheaves $L$ satisfying those lower bonds for all integers $n \leq 0$.
We find that the scheme $\overline{\Sigma}_{\overrightarrow {e}[0]}(C,f)$ is smooth at $L$ in case one has equality for all integers $n \leq 0$.

\end{remark}

As a special case we obtain the proof of Theorem C.
\begin{theorem}\label{TheoC}
Let $C$ be a general $k$-gonal curve of genus $g$ and assume $r$, $d$ and $l$ are such that $V^r_{d,l}(C,f)$ (see Definition \ref{defV}) is not empty.
The scheme $V^r_{d,l}(C,f)$ is smooth.
\end{theorem}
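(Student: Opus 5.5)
The plan is to deduce Theorem \ref{TheoC} from Corollary \ref{cor3.1} with $e=0$, following Remark \ref{final}. By Remark \ref{remV}, $V^r_{d,l}(C,f)$ coincides as a set with $\overset{\circ}{\Sigma}_{\overrightarrow {e}[0]}(C,f)$ for $\overrightarrow {e}$ of type $B(e,b,y,m_1,m_2)$ with $l=em_1+(e+1)m_2$. By Definition \ref{defV}, the scheme structure on $V^r_{d,l}(C,f)$ is that of an open subscheme of $\overline{\Sigma}_{\overrightarrow {e}}(C,f)$. So it suffices to show that $\overline{\Sigma}_{\overrightarrow {e}}(C,f)$ is smooth at every point of $\overset{\circ}{\Sigma}_{\overrightarrow {e}[0]}(C,f)$.

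\textbf{Step 1: $\overrightarrow {e}[0]=\overrightarrow {e}$.} For a balanced plus balanced sequence the negative entries are $-b-1$ and $-b$, and the nonnegative entries are $e$ and $e+1$. We may assume $e_1\leq -2$ (Remark \ref{remV}).
\begin{itemize}
\item The first entry $\geq e_1+2$ is a nonnegative one, so $E=\min\{e_i : e_i\geq e_1+2\}\geq 0$.
\item Notation \ref{not2.1} then gives $\overrightarrow {e}[0]=\overrightarrow {e}$, because $0\leq E$.
\item We have $i(\overrightarrow {e},0)=c$, the index of the first nonnegative entry.
\end{itemize}
One must still check that $0\geq e_1+2$, so that Corollary \ref{cor3.1} applies with $e=0$. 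This holds exactly when $e_1\leq -2$.

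\textbf{Step 2: identify the sets.} $\overset{\circ}{\Sigma}_{\overrightarrow {e}[0]}(C,f)$ is the union of the $\Sigma_{\overrightarrow {e'}}(C,f)$ with $\overrightarrow {e'}\leq\overrightarrow {e}$, $e'_j=e_j$ for $j\geq c$, and $e'_j<0$ for $j<c$. In terms of $h^0$, this means the following.
\begin{itemize}
\item $h^0(L-(e+1)M)=m_2$.
\item $h^0(L-eM)=m_1+2m_2$.
\item $h^0(L)=(e+1)m_1+(e+2)m_2=r+1$.
\end{itemize}
The negative part of the splitting only affects $h^0(L+nM)$ for $n>0$. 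This matches Definition \ref{defV}, confirming the identification asserted in Remark \ref{remV}. The inequality $\overrightarrow{e'}\leq\overrightarrow{e}$ is automatic: the last $k-c+1$ entries agree and the sums agree, and the first $c-1$ entries of $\overrightarrow{e}$ are balanced, so any other negative sequence with the same sum has smaller partial sums.

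\textbf{Step 3: apply the corollary.} Corollary \ref{cor3.1} with $e=0$ says that $\overset{\circ}{\Sigma}_{\overrightarrow {e}[0]}(C,f)$ lies in the smooth locus of $\overline{\Sigma}_{\overrightarrow {e}[0]}(C,f)=\overline{\Sigma}_{\overrightarrow {e}}(C,f)$. Hence the open subscheme $V^r_{d,l}(C,f)$ is smooth.

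The main obstacle is the bookkeeping in Step 1, namely verifying $e_1\leq-2$ and $E\geq 0$ so that the hypothesis $e\geq e_1+2$ holds. The degenerate case $b=1$, $x=0$ needs separate care: there $e_1=-1$, so $L$ is nonspecial, and that case is excluded from consideration.
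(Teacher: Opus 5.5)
Your proposal is correct and is essentially the paper's own proof: identify $V^r_{d,l}(C,f)$ with $\overset{\circ}{\Sigma}_{\overrightarrow{e}[0]}(C,f)$ via Remark \ref{remV}, restrict to $e_1\leq -2$ so that Corollary \ref{cor3.1} applies with $e=0$, and use $\overrightarrow{e}[0]=\overrightarrow{e}$ together with the fact that $V^r_{d,l}(C,f)$ carries the open subscheme structure of $\overline{\Sigma}_{\overrightarrow{e}}(C,f)$. One caveat on your Step 2: when $m_2\geq 2$, the three equalities of Definition \ref{defV} do not by themselves force the nonnegative part of the splitting to be $(e,\dots,e,e+1,\dots,e+1)$ (for $m_1=1$, $m_2=2$ the tail $(e,e,e+2)$ satisfies all three), so the converse inclusion needs the extra condition $h^0(L-(e+2)M)=0$, which is the condition appearing in Example \ref{lastEx}; your main argument, like the paper's, takes the set identification from Remark \ref{remV} and is unaffected, but the ``confirmation'' in Step 2 is not valid as written.
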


\begin{proof}
As mentioned in Remark \ref{remV} $V^r_{d,l}(C,f)$ is the scheme $\overset{\circ}{\Sigma}_{\overrightarrow {e}[0]}(C,f)$ with $\overrightarrow {e}$ of some type $B(e,b,y,m_1,m_2)$.
It follows from Corollary \ref{cor3.1} that $\overset {\circ}{\Sigma}_{\overrightarrow {e}[0]}(C,f)$ is contained in the smooth locus of $\overline{\Sigma}_{\overrightarrow{e}[0]}(C,f)$.
(As mentioned in Remark \ref{remV} we can restrict to cases with $e_1 \leq -2$).)
But in this case $\overrightarrow {e}[0]=\overrightarrow {e}$ and the scheme structure on $V^r_{d,l}(C,f)$ is the one induced by $\overline{\Sigma}_{\overrightarrow {e}}(C,f)$ so we conclude $V^r_{d,l}(C,f)$ is smooth as a scheme.
\end{proof}

\section{The smooth locus of the schemes $W^r_{d,f}(C)$.}\label{section3}

\begin{proposition}\label{prop3}
Let $f :C \rightarrow \mathbb{P}^1$ be a general morphism of degree $d$ with $C$ a smooth curve of genus $g$.
Then as schemes we have $\Sing (W^r_{d,f}(C))=W^{r+1}_{d,f}(C)$.
In particular, for every $L \in \Pic^d(C)_f$ the Petri map of $L$ is injective.
\end{proposition}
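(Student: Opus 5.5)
The plan is to reduce the statement to Theorem \ref{TheoC}, or more precisely to Proposition \ref{prop1} together with Corollary \ref{cor3.1}, by showing that every free $L$ lies in a set of the form $\overset{\circ}{\Sigma}_{\overrightarrow{e}[0]}(C,f)$. Here $\overrightarrow{e}$ will be of balanced plus balanced type with $a=0$ and $v=0$, so the corresponding component is of type I. Throughout we read the degree of $f$ as $k$.

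\emph{Step 1: the splitting type of a free $L$.} Take $L\in W^r_{d,f}(C)\setminus W^{r+1}_{d,f}(C)$ with $L$ special, and let $\overrightarrow{e'}$ be its splitting type. Since $h^0(L-M)=\sum_j \max\{0,e'_j\}$ vanishes, all $e'_j\leq 0$. Since $h^0(L)=\#\{j: e'_j=0\}$, exactly $r+1$ entries are $0$. Let $\overrightarrow{e}$ be the sequence with the same $r+1$ zeros and the negative part completed in a balanced way. It has type $B(0,b,y,r+1,0)$ and satisfies Equation \ref{eq1}.

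Then $\overrightarrow{e'}\leq\overrightarrow{e}$, because the balanced completion maximises the partial sums. Moreover $e'_j=e_j$ for $j\geq i(\overrightarrow{e},0)$ and $e'_j<0$ otherwise, so $L\in\overset{\circ}{\Sigma}_{\overrightarrow{e}[0]}(C,f)$ with $\overrightarrow{e}[0]=\overrightarrow{e}$. The case $e_1\geq -1$ does not occur, since then $L$ would be non-special (Remark \ref{remV}). Hence $e_1\leq -2$ and $E=\min\{e_i:e_i\geq e_1+2\}=0$.

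\emph{Step 2: locally, $W^r_d(C)$ equals $\overline{\Sigma}_{\overrightarrow{e}}(C,f)$.} By Proposition \ref{prop1} with $E=0$, locally at $L$ the scheme $\overline{\Sigma}_{\overrightarrow{e}}(C,f)$ is defined by the ideals $\Fitt(\overrightarrow{e},n)$ with $n\leq 0$ and $-n\in\{e_2,\dots,e_k\}$. Here $E=0$ is the largest entry, so we must have $n=0$, and the only ideal is $\Fitt(\overrightarrow{e},0)$, which is the ideal of $W^{r}_d(C)$ because $r(0)=r$. Thus locally at $L$ the schemes $W^r_d(C)$ and $\overline{\Sigma}_{\overrightarrow{e}}(C,f)$ coincide.

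By Corollary \ref{cor3.1} this scheme is smooth at $L$, of dimension $g-u(\overrightarrow{e})$ by Proposition \ref{prop1.1}. A direct count for type $B(0,b,y,r+1,0)$ should give $u(\overrightarrow{e})=(r+1)(g-d+r)$. This is the one computation I expect to need care: one sums $e_j-e_i-1$ over pairs with a zero and a negative entry, and uses Equation \ref{eq1}. The pairs inside the balanced negative block contribute nothing. Granting it, $\dim T_L W^r_d(C)=\rho^r_d(g)$.

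\emph{Step 3: Petri injectivity and the singular locus.} The identification $T_L W^r_d(C)=(\im\mu_L)^{\perp}$ gives $\rk\mu_L=(r+1)(g-d+r)=h^0(L)h^0(K-L)$, so $\mu_L$ is injective. Now let $L\in W^{r+1}_{d,f}(C)$ be arbitrary, with $h^0(L)=r'+1$ for some $r'>r$. Applying the above with $r'$ in place of $r$ shows that $\mu_L$ is injective; this also covers the non-special case trivially. Finally, a point of $W^{r+1}_d(C)$ always lies in $\Sing(W^r_d(C))$. Indeed, the tangent space there is all of $T\Pic$ or has dimension $g-\rk\mu_L$ computed with the larger $h^0$, and this exceeds $\rho^r_d(g)=\dim_L$ (ACGH, Chapter IV). Together with Step 2, this gives $\Sing(W^r_{d,f}(C))=W^{r+1}_{d,f}(C)$.
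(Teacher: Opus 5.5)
Your proof is correct and has the same skeleton as the paper's. Both reduce to the type $B(0,b,y,r+1,0)$ component (your $\overrightarrow{e}$), identify $W^r_d(C)$ with $\overline{\Sigma}_{\overrightarrow{e}}(C,f)$ near $L$, get smoothness from Proposition~\ref{prop2}/Corollary~\ref{cor3.1}, and quote ACGH for $W^{r+1}_d(C)\subset\Sing(W^r_d(C))$. Two steps are done differently.

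The paper proves the scheme equality $W^r_{d,f}(C)=\overline{\Sigma}_{\overrightarrow{e}}(C,f)\cap\Pic^d_f(C)$ globally. Its argument is that $W^r_{d,f}(C)$ has the expected dimension $\rho^r_d(g)$, hence is Cohen--Macaulay, and is generically reduced by \cite{ref2}, so it is reduced by unmixedness. You instead read off the local equality at $L\notin W^{r+1}_d(C)$ from Proposition~\ref{prop1} with $E=0$, where only $\Fitt(\overrightarrow{e},0)$ survives. This is more economical, since the same unmixedness argument already sits in the base case $m=1$ of the proof of Proposition~\ref{prop1}, and the local statement is all you need. The paper's version additionally gives reducedness of $W^r_{d,f}(C)$ along $W^{r+1}_{d,f}(C)$.

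For Petri injectivity the paper cites ACGH IV, Proposition 4.1(i). You make it explicit instead, and your count is right. With $x$ entries equal to $-b-1$ and $y$ entries equal to $-b$, only the pairs (negative entry, zero entry) contribute, so $u(\overrightarrow{e})=(r+1)(bx+(b-1)y)=(r+1)(g-d+r)$. Combined with $T_LW^r_d(C)=(\im\mu_L)^{\perp}$, this gives injectivity.

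One correction in Step 3. At $L\in W^{r+1}_d(C)$ the Zariski tangent space of $W^r_d(C)$ is always all of $H^1(C,\mathcal{O}_C)$ (ACGH IV, Proposition 4.2(ii)). So $L$ is singular simply because $\dim_L W^r_d(C)<g$ once $r>d-g$, which the statement implicitly assumes. Your alternative ``$g-\rk\mu_L$ computed with the larger $h^0$'' is the tangent space of $W^{r'}_d(C)$, not of $W^r_d(C)$. By your own injectivity result it has dimension $\rho^{r'}_d(g)<\rho^r_d(g)$, so it would not give the inequality you claim; drop it.
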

\begin{proof}
Let $L \in W^r_{d,f}(C)$.
The splitting sequence $\overrightarrow {e}$ of $L$ satisfies $e_k = \cdots = e_{k-r} = 0$.
This implies $\overrightarrow {e} \leq \overrightarrow {e'}$ with $\overrightarrow {e'}$ of type $B(0,b,y,r+1,0)$.
It follows from \cite{ref7} that  $\overline {\Sigma}_{\overrightarrow {e'}}(C,f)$  is an irreducible component of $W^r_d(C)$ of dimension $\rho^r_d(g)$ (in case $\rho^r_d(g)=0$ then is a finite set of isolated points of $W^r_d(C)$).
We obtain the equalities of sets
\[
W^r_{d,f}(C) = \overline {\Sigma}_{\overrightarrow {e'}}(C,f) \cap \Pic^d(C)_f
\]
and
\[
\overset {\circ}{\Sigma}_{\overrightarrow {e'}}(C,f)=(\overline {\Sigma}_{\overrightarrow {e'}}(C,f) \cap \Pic^d(C)_f) \setminus W^{r+1}_d(C) \text { .}
\]

From Proposition \ref{prop1.1} (4) we know the scheme $\overline {\Sigma}_{\overrightarrow {e'}}(C,f)$ is reduced.
Since $W^r_{d,f}(C)$ is equidimensional of dimension $\rho^r_d(g)$ it follows as a scheme $W^r_{d,f}(C)$ is Cohen-Macauly (notice that $W^r_{d,f}(C)$ is an open subset of $W^r_d(C)$).
In \cite {ref2} it is proved that the scheme $W^r_{d,f}(C)$ is generically reduced.
This implies because of the unmixedness Theorem (\cite {ref5}, Corollary 18.14) that also $W^r_{d,f}(C)$ is reduced as a scheme.
This implies the equality as schemes
\[
W^r_{d,f}(C) = \overline {\Sigma}_{\overrightarrow {e'}}(C,f) \cap \Pic^d(C)_f \text { .}
\]

In case $L \notin W^{r+1}_d(C)$ we have $\overrightarrow {e'}=\overrightarrow {e}[0]$.
It follows from Proposition \ref{prop2} that $L$ is contained in the smooth locus of $\overline{\Sigma}_{\overrightarrow {e'}}(C,f)$.
On the other hand, from \cite{ref9}, Chapter IV, Proposition 4.2 (ii), we know $W^{r+1}_{d,f}(C)$ is contained in the singular locus of $W^r_{d,f}(C)$ and we conclude that $\Sing (W^r_{d,f}(C))=W^{r+1}_{d,f}(C)$.

Now for any $L \in \Pic^d(C)_f$ we can take $r$ such that $L \in W^r_{d,f}(C) \setminus W^{r+1}_{d,f}(C)$.
So $L$ is a smooth point of $W^r_d(C)$ and locally at $L$ we have $\dim _L W^r_d(C)=\rho ^r_d(g)$.
It is proved in \cite {ref9} Chapter IV, Proposition 4.1(i), that this is equivalent to the Petri map at $L$ being injecctive.
\end{proof}

As a corollary we obtain Theorem A from the introduction.

\begin{theorem}\label{TheoA}
Let $(C,f)$ be a general $k$-gonal curve and let $L\in \Pic^d(C)$.
The Petri map of $L$ is injective if and only if $h^0(C,L-M)\cdot h^0(C,\omega _C-L-M)=0$.
\end{theorem}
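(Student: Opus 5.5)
The plan is to prove the two directions separately. Necessity is elementary, and sufficiency reduces to Proposition \ref{prop3} via Serre duality.

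\emph{Necessity.} Suppose $h^0(L-M)\neq 0$ and $h^0(K-L-M)\neq 0$. Pick nonzero $z\in H^0(L-M)$ and $w\in H^0(K-L-M)$, and a basis $s,t$ of $H^0(M)$. By the base point free pencil trick (Lemma \ref{lemma1} with $m=1$), the element
\[
zs\otimes wt - zt\otimes ws \in H^0(L)\otimes H^0(K-L)
\]
maps to $zwst-ztws=0$ in $H^0(K)$. It is nonzero: since $s,t$ are linearly independent, so are $zs,zt$ in $H^0(L)$, and likewise $ws,wt$ in $H^0(K-L)$, so the tensor has rank two. Hence $\mu_L$ is not injective.

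\emph{Sufficiency.} Assume $h^0(L-M)\cdot h^0(K-L-M)=0$. If $h^0(L)\cdot h^0(K-L)=0$, the source of $\mu_L$ is zero and there is nothing to prove, so assume $L$ is special. There are two cases.
\begin{itemize}
\item If $h^0(L-M)=0$, then $L\in \Pic^d(C)_f$, and Proposition \ref{prop3} gives directly that $\mu_L$ is injective.
\item If $h^0(K-L-M)=0$, then $K-L$ is free from $M$. Proposition \ref{prop3} applied to $K-L$ shows that $\mu_{K-L}$ is injective. Since $\mu_{K-L}$ coincides with $\mu_L$ after composing with the swap isomorphism $H^0(L)\otimes H^0(K-L)\cong H^0(K-L)\otimes H^0(L)$, the map $\mu_L$ is injective as well.
\end{itemize}

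The only point needing care is that Proposition \ref{prop3} applies for every degree $d$, including the degree $2g-2-d$ of $K-L$, and to every $L\in\Pic^d(C)_f$ with $h^0(L)\ge 1$. This follows from its proof: one chooses $r$ with $L\in W^r_{d,f}(C)\setminus W^{r+1}_{d,f}(C)$, and the hypothesis $h^0(L-M)=0$ forces $r\le k-1$. This is exactly the setting in which the type I component $\overline{\Sigma}_{\overrightarrow{e'}}(C,f)$ with $\overrightarrow{e'}$ of type $B(0,b,y,r+1,0)$ exists. The generality of $(C,f)$ is the same hypothesis in both cases, so no further open condition arises. I expect no real obstacle beyond citing Proposition \ref{prop3} correctly in the dual case.
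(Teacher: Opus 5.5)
Your proposal is correct and is essentially the paper's proof: sufficiency comes from Proposition \ref{prop3} applied to $L$ or to $\omega_C-L$, combined with the symmetry of the Petri map, and necessity comes from the explicit kernel element $zs\otimes wt-zt\otimes ws$. Your check that this tensor is nonzero (it has rank two) fills in a detail the paper leaves as ``clearly''; note also that its vanishing under $\mu_L$ is just commutativity of multiplication, so you do not need the base point free pencil trick at that step.
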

\begin{proof}
Of course, $h^0(C,L-M)=0$ is equivalent to $L \in \Pic^d(C)_f$, therefore in this case it is proved in Proposition \ref{prop3} that the Petri map is injective.
Since the Petri maps of $L$ and $\omega_C-L$ are the same, it implies the Petri map of $L$ is also injective in case $h^0(C,\omega_C-L-M)=0$.

In case $h^0(C,L-M)\cdot h^0(C,\omega _C-L-M) \neq 0$, choose $u \in H^0(C,L-M)$ and $v \in H^0(C,\omega _C-L-M)$, both non-zero, and let $s, t$ be a base of $H^0(C,M)$.
Then clearly $su\otimes tv-tu \otimes sv$ belongs to the kernel of the Petri map of $L$.
\end{proof}

\begin{remark}\label{rem1}
The injectivity of the Petri map implies many local properties of the schemes $W^r_d(C)$ at $L \in W^r_{d,f}(C)$ in case $C$ is a general $k$-gonal curve in case $F$ has characteristic 0.
In particular, taking into account \cite{ref9}, Chapter IV, Proposition 4.1(iii), it implies the conditions of \cite{ref9}, Chapter VI, Theorem 2.1 are fulfilled, giving an explicit description of the tangent cone $TC_L(W^r_d(C))$ of $W^r_d(C)$ at $L$ and an explicit formula for its multiplicity at $0$.
But we can also use the results from \cite{ref16} giving much stronger information on the local structure of $W^r_d(C)$ at such point $L$ in case $C$ is a general $k$-gonal curve.
As a matter of fact, from \cite{ref16}, Theorem 1.4 it follows that locally at such point $L$ for the étale topology $W^r_d(C)$ is isomorphic to the tangent cone $TC_L(W^r_d(C))$ at $0$.

In particular the local results from Theorem 1.6 in \cite{ref16} can be applied in this case.
One of those results imply that for a general $k$-gonal curve $C$ the scheme $W^r_d(C)$ has a rational singularity at $L \in W^{r+1}_{d,f}(C)$.
It should be noted that in \cite{ref3} it is proved that all components of $W^r_d(C)$ have at most rational singularities in case $C$ is a general $k$-gonal curve.
Another example of an application is a formula for the log canonical threshold of $(\Pic ^d(C), W^r_d(C))$ at $L \in W^r_{d,f}(C)$.
\end{remark}

As already noticed, $W^r_{d,f}(C)$ is open in $W^r_d(C)$.
In particular, the closure of $W^r_{d,f}(C)$ is the union of irreducible components of $W^r_d(C)$ of type I (see Notation \ref{notfree}).

\begin{corollary}\label{cor2}
Let $C$ be a general $k$-gonal curve of genus $g$ and let $Z$ be an irreducible component of $W^r_d(C)$ of type 1.
Then 
\[
Z \cap \Sm (W^r_d(C))=((Z \cap \Pic^d(C)_f) \cup (Z \cap (\omega _C-\Pic^{2g-2-d}(C)_f)))\setminus W^{r+1}_d(C) \text { .}
\]
\end{corollary}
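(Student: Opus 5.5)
We prove the two inclusions separately. Throughout, $L \in Z \subset W^r_d(C)$, and we use the description of the smooth locus via the Petri map: for $L \in W^r_d(C)\setminus W^{r+1}_d(C)$ the scheme $W^r_d(C)$ is smooth at $L$ with $\dim_L W^r_d(C)=\rho^r_d(g)$ if and only if $\mu_L$ is injective (\cite{ref9}, Chapter IV, Proposition 4.1 and 4.2). Since $Z$ is of type I it has dimension $\rho^r_d(g)$.

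\emph{Inclusion $\supseteq$.} First take $L \in Z\cap\Pic^d(C)_f$ with $L\notin W^{r+1}_d(C)$. By Theorem \ref{TheoA} (equivalently Proposition \ref{prop3}), $\mu_L$ is injective. Hence $T_L(W^r_d(C))=(\im\mu_L)^\perp$ has dimension $\rho^r_d(g)$, and $\dim_L W^r_d(C)\ge \dim Z=\rho^r_d(g)$. So $W^r_d(C)$ is smooth at $L$. Next take $L\in Z$ with $\omega_C-L\in\Pic^{2g-2-d}(C)_f$, that is $h^0(\omega_C-L-M)=0$, and again $L\notin W^{r+1}_d(C)$. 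Theorem \ref{TheoA} gives injectivity of $\mu_L$ directly, since the condition there is symmetric under $L\leftrightarrow\omega_C-L$. The same tangent space count then gives smoothness.

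\emph{Inclusion $\subseteq$.} Points of $W^{r+1}_d(C)$ are singular (\cite{ref9}, Chapter IV, Proposition 4.2(ii)), so they are excluded. Now let $L\in Z\setminus W^{r+1}_d(C)$ with $h^0(L-M)\ne0$ and $h^0(\omega_C-L-M)\neq0$. By Theorem \ref{TheoA}, $\mu_L$ is not injective. Since $h^0(L)=r+1$ and $h^0(\omega_C-L)=g-d+r$, we get
\[
\dim T_L(W^r_d(C))=g-\rk\mu_L>g-(r+1)(g-d+r)=\rho^r_d(g).
\]
If $W^r_d(C)$ were smooth at $L$, then $L$ would lie on a unique component, necessarily $Z$. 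The local dimension would then be $\dim Z=\rho^r_d(g)$, which is strictly less than the tangent space dimension, a contradiction. Hence $L\in\Sing(W^r_d(C))$.

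The only delicate point is the last step. Smoothness at $L$ forces local irreducibility, so the component through $L$ must be $Z$, and we need the dimension of $Z$ to be exactly $\rho^r_d(g)$. This is the type I dimension statement recalled in the introduction (\cite{CM}, \cite{ref6}, \cite{ref7}). With it, the argument closes without having to identify any other components through $L$.
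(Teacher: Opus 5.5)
Your proof is correct and follows the paper's argument: you exclude $W^{r+1}_d(C)$ via \cite{ref9}, use $\dim Z=\rho^r_d(g)$ to identify smoothness at $L\in Z\setminus W^{r+1}_d(C)$ with injectivity of $\mu_L$, and conclude with Theorem A. The only difference is that you spell out the tangent-space versus local-dimension comparison, including local irreducibility at a smooth point, which the paper simply cites from \cite{ref9}.
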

\begin{proof}
Let $L \in Z \cap \Sm (W^r_d(C))$.
From \cite {ref9} we know $W^{r+1}_d(C) \subset \Sing (W^r_d(C))$, hence $L \notin W^{r+1}_d(C)$.
Since $\dim (Z)=\rho ^r_d(g)$ it follows from \cite {ref9} that $L \in Z \setminus W^{r+1}_d(C)$ belongs to $\Sm (W^r_d(C))$ if and only if the Petri map of $L$ is injective.
From Theorem \ref{TheoA} we know this is equivalent to $L \in \Pic^d(C)_f$ or $L=\omega _C \otimes L'^{-1}$ with $L' \in \Pic^{2g-2-d}(C)_f$.
This implies the claim.
\end{proof}

Let $L \in \omega _C -\Pic^{2g-2-d}(C)_f$ or, equivalently, let $h^0(C,\omega _C-L-M)=0$.
This is equivalent to $h^0(C,\omega _C-L-M-P)=h^0(C,\omega _C-L-M)$ for each point $P \in C$.
Because for each point $P \in C$ we have $h^0(C,M-P) \neq 0$ it is equivalent to $h^0(C,\omega _C-L-M)=h^0(C,\omega _C-L-2M)$, hence $h^0(C,L+2M)=h^0(C,L+M)+k$.
Letting $\overrightarrow {e}$ be the splitting sequence of $L$ it is equivalent to $e_1 \geq -2$.
Therefore we have the following.

\begin{corollary}\label{cor3}
Let $C$ be a general $k$-gonal curve of genus $g$ and let $Z$ be an irreducible component of type I of $W^r_d(C)$.
Let $\overrightarrow {e}$ be the splitting sequence of a general element $L \in Z$ and assume $e_1<-2$.
We have
\[
Z \cap Sm(W^r_d(C))=(Z \cap \Pic^d(C)_f) \setminus W^{r+1}_d(C) \text { .}
\]
\end{corollary}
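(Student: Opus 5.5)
The plan is to deduce Corollary \ref{cor3} directly from Corollary \ref{cor2}, by showing that under the hypothesis $e_1<-2$ the second piece $Z \cap (\omega_C-\Pic^{2g-2-d}(C)_f)$ in the formula of Corollary \ref{cor2} is empty. Once that is done, the formula of Corollary \ref{cor2} reduces to the claimed equality.

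First, I use the equivalence established in the paragraph just before the statement. For $L'\in\Pic^d(C)$ with splitting sequence $\overrightarrow{e'}$, we have $h^0(C,\omega_C-L'-M)=0$ if and only if $e'_1\geq -2$. The argument there goes through $h^0(\omega_C-L'-M-P)=h^0(\omega_C-L'-M)$ for all $P$, then $h^0(\omega_C-L'-M)=h^0(\omega_C-L'-2M)$, which by Riemann--Roch is $h^0(L'+2M)=h^0(L'+M)+k$. In terms of $f_*(L')=\mathcal{O}(\overrightarrow{e'})$ this says no summand has $e'_j+1<0\le e'_j+2$ contributing deficiency, i.e.\ $e'_1\ge -2$. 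So $L'\in \omega_C-\Pic^{2g-2-d}(C)_f$ exactly when $e'_1\ge -2$.

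Next, take any $L'\in Z$. Since $Z$ is irreducible and $\overrightarrow{e}$ is the splitting sequence of its general element, Proposition \ref{prop1.1}(3) gives $Z=\overline{\Sigma}_{\overrightarrow{e}}(C,f)$ as sets. Indeed, $Z$ is a type I component $\overline{\Sigma}_{\overrightarrow{e''}}(C,f)$, and its general point lies in $\Sigma_{\overrightarrow{e''}}(C,f)$, so $\overrightarrow{e}=\overrightarrow{e''}$. By Definition \ref{def1.4}, the splitting sequence $\overrightarrow{e'}$ of $L'$ then satisfies $\overrightarrow{e'}\le\overrightarrow{e}$. Taking the partial sum with $l=1$ in Definition \ref{def1.3} gives $e'_1\le e_1<-2$. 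By the first step, $h^0(C,\omega_C-L'-M)\neq 0$, so $L'\notin \omega_C-\Pic^{2g-2-d}(C)_f$. Hence $Z\cap(\omega_C-\Pic^{2g-2-d}(C)_f)=\emptyset$, and substituting into Corollary \ref{cor2} yields the statement.

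The only delicate point is the identification of $Z$ with the degeneracy locus of the general splitting type, so that every $L'\in Z$ has $\overrightarrow{e'}\le\overrightarrow{e}$. I would take this from the description of the components of $W^r_d(C)$ recalled after Definition \ref{balplusbal}, together with Proposition \ref{prop1.1}(3). The translation of $h^0(\omega_C-L'-M)=0$ into $e'_1\ge -2$ is routine and is already sketched in the paper.
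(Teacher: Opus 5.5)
Your proposal is correct and is essentially the paper's own argument: the paragraph preceding the corollary shows $L'\in\omega_C-\Pic^{2g-2-d}(C)_f$ iff $e'_1\ge -2$, and since every $L'\in Z=\overline{\Sigma}_{\overrightarrow{e}}(C,f)$ has $e'_1\le e_1<-2$, the second piece in Corollary \ref{cor2} is empty. One small slip in your translation step: $h^0(L'+2M)-h^0(L'+M)=\#\{j : e'_j\ge -2\}$, so the deficiency comes from summands with $e'_j\le -3$, not from those with $e'_j+1<0\le e'_j+2$ (that condition picks out $e'_j=-2$); your conclusion $e'_1\ge -2$ is nevertheless right.
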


In case $e_1 <-2$ in Corollary \ref{cor3} we have the following more geometric explanation for points $L$ belonging to $(Z \cap \Sing (W^r_d(C))) \setminus W^{r+1}_d(C)$.

\begin{proposition}\label{prop4}
Let $C$ be a general $k$-gonal curve of genus $g$ and let $Z$ be an irreducible component of type I of $W^r_d(C)$ such that for $L \in Z$ general we have $\omega _C -L \notin \Pic^{2g-2-d}(C)_f$.
There exists another irreducible component $Z' \neq Z$ of $W^r_d(C)$ containing $Z \setminus (\Pic^d(C)_f \cup W^{r+1}_d(C))$.
\end{proposition}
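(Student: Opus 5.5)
The plan is to read off, from the splitting sequence of a bad point, a balanced plus balanced sequence $\overrightarrow{e^*}$ that does not lie below the sequence defining $Z$, and take $Z'$ to be a component containing $\overline{\Sigma}_{\overrightarrow{e^*}}(C,f)$. Write $Z=\overline{\Sigma}_{\overrightarrow{e}}(C,f)$ with $\overrightarrow{e}$ of type $B(0,b,y,r+1,0)$. So $\overrightarrow{e}$ has exactly $r+1$ entries equal to $0$ and $k-r-1$ negative entries, balanced. By Corollary \ref{cor3} and the discussion before it, the hypothesis on $\omega_C-L$ means $e_1\leq -3$, so $b\geq 2$.

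\textbf{Step 1: splitting sequence of a bad point.} Let $L\in Z\setminus(\Pic^d(C)_f\cup W^{r+1}_d(C))$ and let $\overrightarrow{e'}\leq\overrightarrow{e}$ be its splitting sequence. Since $h^0(L)=r+1$ we get
\[
\textstyle\sum_{e'_i\geq 0}(e'_i+1)=r+1 .
\]
Since $h^0(L-M)>0$, some entry satisfies $e'_i\geq 1$. Hence the number $c'$ of nonnegative entries of $\overrightarrow{e'}$ satisfies $c'\leq r$.

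\textbf{Step 2: the sequence $\overrightarrow{e^*}$.} Define $\overrightarrow{e^*}$ with the same total sum as $\overrightarrow{e}$: its top $r$ entries are $(0,\dots,0,1)$, with $r-1$ zeros, and its remaining $k-r$ entries are negative and balanced. This is of type $B(0,b^*,y^*,r-1,1)$, so $r(0)=r$ for it, and it is of type II. I will check $\overrightarrow{e'}\leq\overrightarrow{e^*}$ by comparing partial sums in two steps.
\begin{itemize}
\item First replace the nonnegative block of $\overrightarrow{e'}$ by its balanced version and the negative block by its balanced version. Balancing a block with fixed length and sum only raises partial sums.
\item Then move from $c'$ nonnegative entries with $\sum(e'_i+1)=r+1$ to the extreme configuration $0^{r-1},1$. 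Each such move shifts weight upward while keeping the total fixed.
\end{itemize}
Together these give $L\in\overline{\Sigma}_{\overrightarrow{e^*}}(C,f)$ for every bad point $L$.

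\textbf{Step 3: the second component is different.} By the description of components recalled after Definition \ref{balplusbal}, $\overline{\Sigma}_{\overrightarrow{e^*}}(C,f)$ is itself an irreducible component $Z'$ of $W^r_d(C)$ once we know $b^*\geq 2$ and $u(\overrightarrow{e^*})\leq g$. The latter holds because the set is nonempty, as it contains $L$. If $b^*\geq 2$ fails, I fall back on the weaker statement that $\overline{\Sigma}_{\overrightarrow{e^*}}(C,f)$ is irreducible and contained in $W^r_d(C)$, hence lies in some component $Z'$. In either case $Z'\neq Z$. If $Z'=Z$, the general point of $\Sigma_{\overrightarrow{e^*}}(C,f)$ would lie in $\overline{\Sigma}_{\overrightarrow{e}}(C,f)$, forcing $\overrightarrow{e^*}\leq\overrightarrow{e}$. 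But the partial sums up to $k-1$ compare as
\[
\textstyle\sum_{i\leq k-1}e^*_i=\sum_i e_i-1<\sum_i e_i=\sum_{i\leq k-1}e_i ,
\]
so $\overrightarrow{e^*}\not\leq\overrightarrow{e}$ in the reverse direction needed, a contradiction.

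\textbf{Expected obstacle.} The main difficulty is the combinatorial ordering check in Step 2. Equivalently, one must verify directly the inequalities $h^0(L+nM)\geq r^*(n)+1$ for all $n$, where $r^*(n)$ is the quantity of Notation \ref{notrn} for $\overrightarrow{e^*}$. For $n\geq 0$ the entries of $\overrightarrow{e^*}$ are extremal and the check is easy. For $n<0$ I would use the facts that $L\in Z$ and $h^0(L-M)\geq 1$, together with base point freeness of $|M|$, which gives $h^0(L-M)\geq h^0(L-2M)+\dots$. Confirming $b^*\geq 2$ from $e_1\leq -3$ is a secondary point; I would treat it with the averaging identity $\sum_i e^*_i=\sum_i e_i$.
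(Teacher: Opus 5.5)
Your sequence $\overrightarrow{e^*}$ is exactly the paper's $\overrightarrow{e'''}$, so your overall strategy matches the paper. However, the heart of the proof, the inequality $\overrightarrow{e'}\le\overrightarrow{e^*}$, is not established, and the route you give in Step 2 fails.

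Balancing the negative block of $\overrightarrow{e'}$ does raise partial sums. But it discards the only link between $L$ and $Z$, namely $\overrightarrow{e'}\le\overrightarrow{e}$, and the balanced sequence need not lie below $\overrightarrow{e^*}$. For example, take $k=5$, $r=2$, $\overrightarrow{e}=(-3,-3,0,0,0)$ and $\overrightarrow{e'}=(-5,-1,-1,-1,2)$. Then $\overrightarrow{e'}\le\overrightarrow{e}$, $h^0(L)=3$, $h^0(L-M)=2$, and $\overrightarrow{e^*}=(-3,-2,-2,0,1)$. Here $\overrightarrow{e'}\le\overrightarrow{e^*}$ does hold. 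But your balanced sequence is $(-2,-2,-2,-2,2)$, whose first partial sum $-2$ exceeds $-3$. So the chain $\overrightarrow{e'}\le\mathrm{bal}(\overrightarrow{e'})\le\overrightarrow{e^*}$ breaks at its second link, and no further moves that raise partial sums can repair it.

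Your fallback in the last paragraph also has the difficulty backwards. For $n<0$ the only nontrivial condition is $h^0(L-M)\ge1$ at $n=-1$. The conditions for $n\ge0$ are not automatic; they are exactly where $L\in Z$ must be used.

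What is needed, and what the paper does, is to work with $\overrightarrow{e'}\le\overrightarrow{e}$ directly. Put $\sigma=\sum_i e_i$ and $\overrightarrow{e''}=(e_1,\dots,e_{k-r-1},-1,0,\dots,0,1)$. Balancing the first $k-r$ entries of $\overrightarrow{e''}$ (all negative) gives $\overrightarrow{e^*}$, so $\overrightarrow{e''}\le\overrightarrow{e^*}$.

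\begin{itemize}
\item For $j\le k-r-1$ you get $\sum_{i\le j}e'_i\le\sum_{i\le j}e_i=\sum_{i\le j}e''_i\le\sum_{i\le j}e^*_i$.
\item For $k-r\le j\le k-1$ you need the strict bound $\sum_{i>j}e'_i\ge1$. Since $c'\le r$, we have $e'_{k-r}\le-1$. The inequality $\overrightarrow{e'}\le\overrightarrow{e}$ at $j=k-r-1$ gives $\sum_{i\ge k-r}e'_i\ge0$, hence $\sum_{i>k-r}e'_i\ge1$. Dropping negative entries only increases a tail sum, and a tail of nonnegative entries contains $e'_k\ge1$. So $\sum_{i>j}e'_i\ge1$ for all such $j$.
\end{itemize}

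In your $h^0$ language the same argument reads as follows. The sequences $\overrightarrow{e''}$ and $\overrightarrow{e}$ give the same numbers $r(n)$ for all $n\ge0$, and $\overrightarrow{e''}\le\overrightarrow{e^*}$ gives $r^*(n)\le r(n)$ there. Hence $L\in Z$ yields $h^0(L+nM)\ge r^*(n)+1$ for $n\ge0$.

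Step 3 also contains an error. The displayed inequality $\sum_{i\le k-1}e^*_i<\sum_{i\le k-1}e_i$ is what $\overrightarrow{e^*}\le\overrightarrow{e}$ would \emph{require}, so it yields no contradiction. The violation is at $j=k-r-1$, where $\sum_{i\le k-r-1}e^*_i=\sigma-1-e^*_{k-r}\ge\sigma+1$ because $e^*_{k-r}\le-2$. That is your $b^*\ge2$, which does follow from $e_1\le-3$ as you suggest. More simply, once $b^*\ge2$ is known, $\overline{\Sigma}_{\overrightarrow{e^*}}(C,f)$ and $Z$ are components with disjoint dense open subsets $\Sigma_{\overrightarrow{e^*}}(C,f)$ and $\Sigma_{\overrightarrow{e}}(C,f)$, so they differ.
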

\begin{proof}
Let $L \in Z \setminus (\Pic^d(C)_f \cup W^{r+1}_d(C))$ and let $\overrightarrow {e}$ be the splitting sequence of $L$.
Let $\overrightarrow {e'}$ be the splitting sequence of a general element of $Z$ (hence $\overrightarrow {e'}$ is of type $B(0,b,y,r+1,0)$).
We have $\overrightarrow {e} \leq \overrightarrow {e'}$ and by assumption on $Z$ we have $e'_1<-2$.

Since $L \notin \Pic^d(C)_f$ we have $e_k \geq 1$ (while $e'_k=0$).
Since $L \notin W^{r+1}_d(C)$ we need $r\geq 1$.
For all $1 \leq n \leq k$ we have $\Sigma _{i=1}^n e_i \leq \Sigma _{i=1}^n e'_i$ while $\Sigma _{i=1}^k e_i=\Sigma _{i=1}^k e'_i=\Sigma _{i=1}^{k-r-1}e'_i$.
Suppose there exists $k-r \leq n <k$ such that $\Sigma _{i=1}^n e_i=\Sigma _{i=1}^n e'_i$.

In particular we have $\Sigma _{i=1}^n e_i=\Sigma _{i=1}^{k-r-1} e'_i=\Sigma _{i=1}^k e_i$.
It follows that $\Sigma _{i=k-r}^n e_i=\Sigma _{i=1}^ne_i -\Sigma _{i=1}^{k-r-1} e_i=\Sigma _{i=1}^{k-r-1} e'_i -\Sigma _{i=1}^{k-r-1}e_i \geq 0$.
This implies $e_i \geq 0$ for $n \leq i < k$ and since $e_k \geq 1$ we have $\Sigma _{i=k-r}^k e_i > \Sigma _{i=1}^n e_i -\Sigma _{i=1}^{k-r-1} e_i=\Sigma _{i=1}^k e_i -\Sigma _{i=1}^{k-r-1}e_i$, giving a contradiction.
Therefore for each $k-r \leq n <k$ we have $\Sigma _{i=1}^ne_i < \Sigma _{i=1}^n e'_i$.

Let $\overrightarrow {e''}$ be the splitting sequence defined by $e''_i=e'_i$ for $i \leq k-r-1$, $e''_{k-r}=-1$, $e''_i=0$ for $k-r<i<k$ and $e''_k=1$, then we find $\overrightarrow {e} \leq \overrightarrow {e''}$, hence $\Sigma_{\overrightarrow {e}}(C,f) \subset \overline {\Sigma}_{\overrightarrow {e''}}(C,f)$.
Since $e_1<-2$ it follows that $\overrightarrow {e''}$ is not of balanced plus balanced type.
Let $\overrightarrow {e'''}$ be the balanced plus balanced type obtained from $\overrightarrow {e''}$ taking $e'''_i=e''_i$ for $i \geq k-r+1$.
We have $\overrightarrow {e''} \leq \overrightarrow {e'''}$, hence $\overline {\Sigma} _{\overrightarrow{e''}}(C,f) \subset \overline {\Sigma}_{\overrightarrow {e'''}}(C,f)$ and $e'''_{k-r}<-1$.
This implies $\overline {\Sigma}_{\overrightarrow {e'''}}(C,f)$ is an irreducible component $Z' \neq Z$ of $W^r_d(C)$ and $L \in Z'$. 
\end{proof}

Assume $C$ is a $k$-gonal curve and a general element $L\in W^r_{d,f}(C)$ has a splitting sequence $\overrightarrow {e}$ with $e_1=-1$.
In that case one has $W^r_d(C)=\Pic^d(C)$ with $d=g+r$.
Therefore in considering the case in Corollary \ref{cor2} with $e_1 \geq -2$ we can restrict to $e_1=-2$.
In case $e_2=-1$ we are considering $W^r_d(C)=\omega _C-W^0_{g-r-1}(C)$ (hence $d=g+r-1$).
From our results it follows that then for a general $k$-gonal curve $C$ we have $\Sm (W^r_d(C))=W^r_d(C) \setminus W^{r+1}_d(C)$ (as soon as $L \in W^r_d(C) \setminus W^{r+1}_d(C)$ then $\omega _C -L$ is free from $M$).
So we can restrict to the case $e_1=e_2=-2$ and in this case we obtain the following result.

\begin{proposition}\label{prop5}
Let $C$ be a general $k$-gonal curve and let $Z$ be an irreducible component of type I such that a general element $L$ of $Z$ has a splitting sequence $\overrightarrow{e}$ with $e_1=e_2=-2$.
If the genus $g$ is large enough then there exists $L \in Z \setminus W^{r+1}_d(C)$ such that $W^r_d(C)$ is singular at $L$ and $Z$ is the only irreducible component of $W^r_d(C)$ containing $L$ (in particular $Z$ is singular at $L$ as a set).
\end{proposition}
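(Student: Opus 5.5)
Since $e_1=-2$ we have $e_1 \geq -2$, so Corollary \ref{cor3} does not apply. The points of $Z \setminus W^{r+1}_d(C)$ with injective Petri map are exactly those with $L \in \Pic^d(C)_f$ or $\omega_C - L \in \Pic^{2g-2-d}(C)_f$ (Corollary \ref{cor2} and Theorem \ref{TheoA}). By the argument preceding Corollary \ref{cor3}, the second condition is equivalent to the splitting sequence $\overrightarrow{e}(L)$ of $L$ satisfying $e(L)_1 \geq -2$.

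The plan is therefore to exhibit a splitting sequence $\overrightarrow{f}$ with the following properties:
\begin{itemize}
\item[(a)] $\overrightarrow{f} \leq \overrightarrow{e}$, where $\overrightarrow{e}$ is of type $B(0,1\text{ or }b,\dots,r+1,0)$ and generic on $Z$;
\item[(b)] $f_k \geq 1$, so $L$ is not free from $M$;
\item[(c)] $f_1 \leq -3$, so $\omega_C-L$ is not free from $M$;
\item[(d)] $L \notin W^{r+1}_d(C)$, i.e. $\sum_j \max(0,f_j+1)=r+1$;
\item[(e)] $\overrightarrow{f}$ is not dominated by any other balanced plus balanced sequence $\overrightarrow{e'''}$ giving a component of $W^r_d(C)$.
\end{itemize}
Given such $\overrightarrow{f}$, Proposition \ref{prop1.1}(2) shows $\Sigma_{\overrightarrow f}(C,f)\neq\emptyset$ provided $u(\overrightarrow f)\leq g$, which is where ``$g$ large enough'' enters, since $u$ grows with the degree of unbalancedness. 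Any $L \in \Sigma_{\overrightarrow f}(C,f)$ then lies in $Z$ (by (a) and Proposition \ref{prop1.1}(3)), lies outside $W^{r+1}_d(C)$, and has a non-injective Petri map. It is therefore singular on $W^r_d(C)$ (by \cite{ref9}, since $\dim Z=\rho^r_d(g)$).

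Concretely, write $\overrightarrow{e}=(-2,-2,\dots,-2,-1,\dots,-1,0,\dots,0)$ with $x\geq 2$ entries $-2$. I would try
\[
\overrightarrow{f}=(-3,-2,\dots,-2,-1,\dots,-1,0,\dots,0,1),
\]
obtained by lowering one $-2$ to $-3$ and raising one $0$ to $1$ while keeping $r+1$ nonnegative slots. For this to keep $h^0(L)=r+1$, some $0$ must become $-1$ to balance the count. The simplest candidate therefore replaces $(-2,0,0)$ by $(-3,-1,1)$, which preserves both the sum and $\sum\max(0,f_j+1)=r+1$ (this needs $r\geq 1$). Checking $\overrightarrow f\leq\overrightarrow e$ on partial sums is immediate.

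The main obstacle is (e): showing that $L$ lies on no component other than $Z$. In the proof of Proposition \ref{prop4}, components through such $L$ arose from $\overrightarrow{e'''}$ obtained by balancing $(\dots,-1,0,\dots,0,1)$. The candidates to rule out are the types $B(a,b,y,u,v)$ with $u(a+1)+v(a+2)=r+1$ and ($b\geq 2$ or $(a,b,v)=(0,1,0)$) that dominate $\overrightarrow f$. Those of type I with $a=0,v=0$ differ only in $b$; the balanced negative part forces the type to be $\overrightarrow e$ itself. Type II/III components need some entry $\geq 1$, i.e. $v\geq1$ or $a\geq1$, together with $b\geq 2$, meaning all negative entries are $\leq -2$. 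Domination $\overrightarrow f\leq\overrightarrow{e'''}$ compares partial sums, and here the entry $-1$ of $\overrightarrow f$ matters. Because $\overrightarrow f$ has $-1$'s, for each such candidate I would compare the partial sum up to index $k-r-1$. The partial sum of the negative part of $\overrightarrow{e'''}$ with $b\geq2$ and fewer nonnegative slots (since $v\geq1$ forces $u+v<r+1$) should be strictly smaller than that of $\overrightarrow f$. This violates $\overrightarrow f\leq\overrightarrow{e'''}$ and gives uniqueness. If this comparison fails for small $\#(-1)$ entries, I would adjust $\overrightarrow f$ to lower more entries, paid for by larger $u(\overrightarrow f)$, again absorbed by taking $g$ large.
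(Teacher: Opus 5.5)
Your overall plan is the paper's: find $\overrightarrow{f}\le\overrightarrow{e}$ with $f_1\le -3$, $f_k\ge 1$ and $h^0=r+1$, get $\Sigma_{\overrightarrow{f}}(C,f)\neq\emptyset$ from $u(\overrightarrow{f})\le g$, then rule out a second component. The gap is the concrete sequence: it is not admissible. Replacing $(-2,0,0)$ by $(-3,-1,1)$ lowers the total by one. So your $\overrightarrow{f}$ violates Equation (1) in degree $d$ (it is a splitting sequence for degree $d-1$), and $\overrightarrow{f}\le\overrightarrow{e}$ is not even defined. For example, for $\overrightarrow{e}=(-2,-2,-2,-2,0,0,0)$ your recipe gives $(-3,-2,-2,-2,-1,0,1)$, of sum $-9\neq -8$.

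Lowering a $0$ to $-1$ has to be paid for by raising a second $-2$ to $-1$. Let $l$ be the number of entries $-2$ in $\overrightarrow{e}$, and take $f_1=-3$, $f_i=-2$ for $2\le i\le l-1$, $f_i=-1$ for $l\le i\le k-r$, $f_i=0$ for $k-r<i<k$, and $f_k=1$ (in the example, $(-3,-2,-2,-1,-1,0,1)$). This is exactly where $e_2=-2$ enters. Your argument never uses that hypothesis, yet for $e_1=-2$, $e_2=-1$ no such $L$ exists: $f_1\le -3$ together with $h^0=r+1$ already forces the total to be at most $-(k-r)-1$, below the total $-(k-r)$ of $\overrightarrow{e}$. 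With this correction your $\overrightarrow{f}$ is the paper's $\overrightarrow{e'}$, and (a)--(d) hold.

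For (e), the comparison at index $k-r-1$ does not always work. The corrected $\overrightarrow{f}$ has partial sum $-(l+k-r-1)$ there. When $\overrightarrow{e}$ has no entries $-1$ (as in the example above), this equals $-2(k-r-1)$, and a sequence whose first $k-r-1$ entries are all $-2$ ties with it. You do not need to modify $\overrightarrow{f}$; compare at index $k-r$ instead. A component other than $Z$ is not of type I, since the type I component is unique. So it has $b\ge 2$, and $u+v\le r$ follows from $u(a+1)+v(a+2)=r+1$ with $a\ge 1$ or $v\ge 1$. Hence its first $k-r$ entries are all $\le -2$, and its partial sum at $k-r$ is $\le -2(k-r)$. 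The partial sum of $\overrightarrow{f}$ there is $-(l+k-r)\ge -2(k-r)+1$, because $l\le k-r-1$.

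With this repair your uniqueness step is a valid, purely combinatorial alternative to the paper's argument. The paper takes a general point $L'$ of a hypothetical second component $Z'\neq Z$. It uses semicontinuity, $h^0(L'-M)\le h^0(L-M)=1$ and $h^0(L')\le r+1$, together with $L'\notin\Pic^d_f(C)$. This forces the splitting sequence of $L'$ to be $(e_1,\dots,e_{k-r-1},-1,0,\dots,0,1)\le\overrightarrow{e}$, so $Z'\subset Z$. Your route checks the classification of components directly through the condition $b\ge 2$. The paper's route needs only that generic splitting types of components are balanced plus balanced and that the type I component is unique.
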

\begin{proof}
Define the integer $l \geq 2$ such that $e_l=-2$ and $e_{l+1}>-2$.
Clearly $l \leq k-r-1$.
Consider the splitting sequence $\overrightarrow{e'}$ defined by $e'_1=-3$, $e_i=-2$ for $2 \leq i \leq l-1$, $e'_i=-1$ for $l\leq i \leq k-r$, $e'_i=0$ for $k-r+1 \leq i <k$ and $e'_k=1$.
Clearly $\overrightarrow {e'} \leq \overrightarrow {e}$, hence $\Sigma_{\overrightarrow {e'}}(C,f)\subset Z\setminus W^{r+1}_d(C)$ and if $g$ is large enough it is not empty.
Also for $L \in \Sigma _{\overrightarrow {e'}}(C,f)$ we have $h^0(C,L-M).h^0(C,\omega _C-L-M)\neq 0$, hence $L \in \Sing(W^r_d(C))$.

Assume such element $L$ would belong to another irreducible component $Z'$ of $W^r_d(C)$.
Let $L'$ be a general element of $Z'$ and let $\overrightarrow {e''}$ be its splitting sequence.
It has to be of balanced plus balanced type.
Since $Z' \neq Z$ we have $L' \notin \Pic^d(C)_f$, hence $h^0(C,L'-M)\neq 0$.
Since $h^0(C,L-M)=1$ we have $h^0(C,L'-M)=1$.
It implies $e''_k=1$ and $e''_{k-1}<1$.
Since $h^0(C,L')=r+1$ it implies $e''_i=0$ for $k-r+1 \leq i \leq k-1$ and $e''_{k-r}<0$.
This implies $\overrightarrow {e''}$ has to satisfy $e''_i=e_i$ in case $1 \leq i \leq k-r-1$ and $e''_{k-r}=-1$.
This implies $\overrightarrow {e''} \leq \overrightarrow {e}$, hence $Z' \subset Z$, a contradiction.
As a conclusion we find $Z$ is the only irreducible component of $W^r_d(C)$ containing $L$.
\end{proof}

We consider some examples illustrating the previous result and indicating how to make the expression ''$g$ large enough'' more concrete.

\begin{example}\label{ex1}[The singularity locus of the theta divisor]

We consider the scheme $W^1_{g-1}(C)$ for a general $k$-gonal curve $C$.

A general element $L$ in some component of $W^1_{g-1}(C)$ has a splitting sequence $\overrightarrow {e}$ with $\Sigma _{i=1}^k e_i=-k$.
In case $e_k=1$ it follows $\Sigma _{i=1}^{k-1} e_i=-(k+1)$.
In case $e_{k-1}<-1$ then $\Sigma _{i=1}^{k-1} e_i\leq -2(k-1)$ and we obtain $k\leq 3$.
This shows that in case $k\geq 4$ then $W^1_{g-1}(C)$ is irreducible of dimension $g-4$ and it is of type I.
In particular $\overrightarrow {e}= (-2,-2,-1, \cdots ,-1,0,0)$.

Let $\overrightarrow{e'}=(-3,-1,\cdots ,-1,1)$.
We have $\overline {\Sigma}_{\overrightarrow {e'}}(C,f)$ is not empty if and only if $g \geq 2k-1$.
It follows that in case $g<2k-1$ we have $\Sing (W^1_{g-1}(C))=W^2_{g-1}(C)$ and in case $g \geq 2k-1$ we have $\Sing (W^1_{g-1})\setminus W^2_{g-1}(C)$ is non-empty and if $g>2k-1$ it is irreducible of dimension $g-(2k-1)$ (it is equal to $\overline {\Sigma}_{\overrightarrow{e'}}(C,f) \setminus W^2_{g-1}(C)$).

Those singular points of $W^1_{g-1}(C)$ are obtained as follows.
One has $\dim (\vert \omega _C-2M \vert)=g-(2k-1)$.
A general element $E$ of $\vert \omega _C -2M \vert$ can be written as $E=E_1+E_2$ with $E_1$, $E_2$ both effective of degree $g-1-k$, $\dim \vert M+E_1 \vert =1$ and $L=M+E_1$ is a general element of $\Sigma _{\overrightarrow {e'}}(C,f)$.
This is proved by \cite{ref7} for this splitting sequence $\overrightarrow {e'}$.
\end{example}

\begin{example}\label{ex2}
One can make a similar description in case $\overrightarrow {e}=(-2,-2,-2,-1,\cdots ,-1,0,0)$ for $k \geq 5$.
In case $g \geq 7$ we have $\overline {\Sigma}_{\overrightarrow {e}}(C,f)$ is an irreducible component $Z$ of $W^1_{g-2}(C)$ of type I of dimension $g-6$ in case $C$ is a general $k$-gonal curve.
Let $\overrightarrow {e'}=(-3,-2,-1, \cdots, -1,1)$.
In case $g \geq 2k-1$ we have $\overline {\Sigma}_{\overrightarrow {e'}}(C,f)$ is not empty of dimension $g-(2k-1)$ and $\overline {\Sigma}_{\overrightarrow {e'}}(C,f) \setminus W^2_{g-2}(C)=\Sing (Z)\setminus W^2_{g-2}(C,f)$.
Again a general element $E$ of $\vert \omega _C-2M \vert$ can be written as $E=E_1+E_2$ with $E_1$ effective of degree $g-2-k$ and $E_2$ effective of degree $k$ with $\dim (\vert M+E_1 \vert )=1$ and $L=M+E_1$ is a general point of $\Sigma _{e'}(C,f)$.
\end{example}

\begin{example}\label{ex3}
The description of this example is a little bit more complicated.
Let $k=7$ and consider $\overrightarrow {e}=(-2,-2,-2,-2,0,0,0)$.
In case $g \geq 12$ then $\overline{\Sigma}_{\overrightarrow {e}}(C,f)$ is an irreducible component $Z$ of type I of dimension $g-12$ of $W^2_{g-2}(C)$ in case $C$ is a general 7-gonal curve of genus $g$.
Let $\overrightarrow {e'}=(-3,-2,-2,-1,-1,0,1)$.
In case $g \geq 15$ then $\Sigma_{\overrightarrow {e'}}(C,f)$ is contained in the singular locus of $Z \setminus W^2_{g-2}(C)$.

An element of $\Sigma _{\overrightarrow {e'}}(C,f)$ is of the type $M+E_1$ with $E_1$ effective of degree $g-9$, $\vert E_1-M \vert = \emptyset$ and $\dim (\vert M+E_1 \vert )=2$.
We also need to have $\dim (\vert 2M+E_1 \vert)=6$, hence $\dim (\vert \omega _C -2M-E_1 \vert )=0$.

We have $\dim (\vert \omega _C -2M \vert)=g-13$ and inside $\vert \omega _C-2M \vert$ we have a subspace of dimension $g-15$ consisting of divisors $E$ that can be written as $E=E_1+E_2$ with $E_1$ effective of degree $g-9$, $E_2$ effective of degree $g-7$ and having the special property $\dim (\vert M+E_1 \vert )=2$.
One has $L=M+E_1$ is a general element of $\Sigma_{\overrightarrow {e'}}(C,f)$.
\end{example}

\begin{remark}\label{rem3}
Using the notation from the proof of Proposition \ref{prop4}, by definition we have $\overrightarrow {e''} \leq \overrightarrow {e'}$ hence $\Sigma _{\overrightarrow {e''}}(C,f) \subset Z \setminus W_d^{r+1}(C)$.
We showed $W^r_d(C)$ is singular along $\Sigma_{\overrightarrow {e''}}(C,f)$ because it is in the other irreducible component $Z'$ of $W^r_d(C)$.
But restricting to the component $Z$ of $W^r_d(C)$ we have $\Sigma_{\overrightarrow {e''}}(C,f)$ belongs to the smooth locus of $Z$.
Indeed, $\Sigma_{\overrightarrow {e''}}(C,f) \subset \overline {\Sigma}_{\overrightarrow {e'}}(C,f)$ has codimension 1 and in \cite{ref6}, Theorem 9.1 it is proved that this implies $\Sigma _{\overrightarrow {e''}}(C,f) \subset \Sm (\overline{\Sigma}_{\overrightarrow {e'}}(C,f))$.
In the next section we give a generalisation of this statement based on the use of Petri maps.
\end{remark}

\section{A smoothness result for the degeneracy loci}\label{section4}

In order to prove the irreducibility statement contained in Proposition \ref{prop1.1} (4), the authors prove the scheme $\overline {\Sigma} _{\overrightarrow {e}}(C,f)$ is smooth along $\Sigma _{\overrightarrow {e'}}(C,f)$ in case $\overrightarrow {e'} \leq \overrightarrow {e}$ and $u(\overrightarrow {e'})=u(\overrightarrow {e})+1$ (i.e. the codimension 1 case).
As a first step of the proof in \cite{ref6} in proving that, the authors describe such pairs $\overrightarrow {e'} \leq \overrightarrow {e}$.
Those pairs are described by the existence of $1 \leq i < i-1+k' \leq k$ with $k' \geq 2$ such that $e_i = \cdots =e_{i-1+k'}=a$, $e_{i-1} <a-1$ in case $i>1$, $e_{i+k'} > a+1$ in case $i+k'\leq k$ and $e'_i=a-1$, $e_{i-1+k'}=a+1$ and $e'_j=e_j$ in case $j \notin \{ i, i-1+k' \}$.

As a generalisation we prove the following
\begin{proposition}\label{prop5.1}
Let $\overrightarrow {e}$ be such that there exists $1 \leq i \leq i-1+k' \leq k$ such that $e_i = \cdots = e_{i-1+k'}=a$, $e_{i-1}<a-1$ in case $i >1$ and $e_{i+k'}>a+1$ in case $i+k' \leq k$.
Let $l$ be an integer satisfying $1 \leq l \leq [\frac{k'}{2}]$ and consider $\overrightarrow {e'}$ defined by $e'_j=a-1$ for $i \leq j \leq i+l-1$, $e'_j=a+1$ for $i+k'-l \leq j \leq i-1+k'$ and $e'_j=e_j$ in all other cases.
In case $C$ is a general $k$-gonal curve of genus $g$ and $u(\overrightarrow {e'})\leq g$ then $\Sigma _{\overrightarrow {e'}}(C,f)$ is contained in the smooth locus of $\overline{\Sigma}_{\overrightarrow {e}}(C,f)$.
\end{proposition}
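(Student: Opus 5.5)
We compare the dimension of the tangent space of $\overline{\Sigma}_{\overrightarrow{e}}(C,f)$ at a point $L\in\Sigma_{\overrightarrow{e'}}(C,f)$ with the dimension $g-u(\overrightarrow{e})$ of $\overline{\Sigma}_{\overrightarrow{e}}(C,f)$ (Proposition \ref{prop1.1} (1),(3)). Smoothness at $L$ means
\[
\dim T_L\,\overline{\Sigma}_{\overrightarrow{e}}(C,f)=g-u(\overrightarrow{e}),
\]
and we intend to obtain this by combining the local equations from Proposition \ref{prop1}/Corollary \ref{cor2.1} with a Petri-map computation as in Remark \ref{rem3}.

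\textbf{Step 1: reduction to one Fitting ideal.} Since $e'_j=e_j$ outside the block $i,\dots,i-1+k'$, the only values of $n$ whose lower bounds on $h^0(L+nM)$ change between $\overrightarrow{e}$ and $\overrightarrow{e'}$ are those with $-n$ near $a$. For $n=-(a+1)$ and $n\le -(a+2)$ the bounds coincide, and for $n\ge -(a-1)$ they coincide as well. Only $n=-a$ differs: for $\overrightarrow{e}$ the bound includes $k'$ from the block, while for $\overrightarrow{e'}$ it includes $2l+(k'-2l)=k'$ from the block. So $h^0(L-aM)$ equals $r(-a)+1$ for both.

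Hence at $L$ the point is "generic" for every $n\neq -a$, and in fact also for $n=-a$ as a count. The difference lies in the relation between $L-aM$, $L-(a+1)M$ and $L-(a-1)M$. Using Proposition \ref{prop1} with the sequence $\overrightarrow{e'}$, and the fact that $\overrightarrow{e'}$ lies in a suitable $\overset{\circ}{\Sigma}$ for the entries $\ge a+1$, we plan to show the following: locally at $L$, $\overline{\Sigma}_{\overrightarrow{e}}(C,f)$ is cut out inside a smooth $\overline{\Sigma}_{\overrightarrow{e}[a+1]}$-type scheme (smooth at $L$ by Corollary \ref{cor3.1}) by the single ideal $\Fitt(\overrightarrow{e},-a)$, using the presentation $N\xrightarrow{u}\cdots$ from the proof of Proposition \ref{prop1}.

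\textbf{Step 2: tangent space via a Petri-type map.} The tangent space of the degeneracy scheme defined by $\Fitt(\overrightarrow{e},-a)$ at $L$ is the orthogonal complement of the image of a Petri-type map. Following Proposition 4 of \cite{ref2}, this map is the multiplication
\[
\ker\bigl(H^0(L-aM)\to \text{ part coming from } N'\bigr)\otimes H^0(K-L+aM)\to H^0(K),
\]
that is, on the quotient $H^0(L-aM)/(H^0(L-(a+1)M)\otimes H^0(M))$ paired with $H^0(K-L+aM)$ modulo the corresponding part for $K-L$. Since $e'$ has the same $h^0$ at $n=-a$, the required codimension $u(\overrightarrow{e})-u(\overrightarrow{e}[a+1])$ equals the rank of this map. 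We must show this map, restricted to the relevant pieces, is injective. The base point free pencil trick (Lemma \ref{lemma1}) identifies the kernel with sections of $H^0(L-(a+1)M)\otimes H^0(K-L+(a-1)M)$ that come from the $l$ pairs $(a-1,a+1)$. These, however, are already accounted for in $N'$ by construction, so the kernel on the quotient vanishes. This would follow as in Proposition \ref{prop3} by applying the generic Petri injectivity for splitting loci (\cite{ref2}, Remark 5, Larson) to the point $L\in\Sigma_{\overrightarrow{e'}}(C,f)$, which is smooth in $\overline{\Sigma}_{\overrightarrow{e'}}(C,f)$.

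\textbf{Main obstacle.} The hard step is the last one. Injectivity of the Petri map for $\overline{\Sigma}_{\overrightarrow{e'}}$ gives $\dim T_L=g-u(\overrightarrow{e'})$ for the smaller scheme, but we need $g-u(\overrightarrow{e})$ for the larger one. We must therefore show that the $l$ pairs $(a-1,a+1)$ contribute exactly $u(\overrightarrow{e'})-u(\overrightarrow{e})$ independent tangent directions that are killed only by the extra Fitting ideals for $\overrightarrow{e'}$ (those with $n=-(a-1),-(a+1)$), not by $\Fitt(\overrightarrow{e},-a)$. We will try to verify this by an explicit count analogous to Lemma \ref{lemma2}. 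The condition $l\le[k'/2]$ ensures $\overrightarrow{e}$ and $\overrightarrow{e'}$ share all Fitting data except on the block, which makes this count possible.
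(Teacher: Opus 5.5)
There is a genuine gap, and it starts with the bookkeeping in Step 1. Compare the lower bounds $r(n)+1$ for $\overrightarrow{e}$ and $\overrightarrow{e'}$ block by block:
\begin{itemize}
\item for $n\geq -(a-1)$ the block contributes $k'(a+n+1)$ to both;
\item for $n=-a$ it contributes $k'$ to both, as your own count shows;
\item for $n\leq -(a+2)$ it contributes $0$ to both;
\item for $n=-(a+1)$ it contributes $0$ for $\overrightarrow{e}$ but $l$ for $\overrightarrow{e'}$.
\end{itemize}
So the only differing Fitting ideal is at $n=-(a+1)$. It is not at $n=-a$, and not at $n=-(a-1)$ as your last paragraph says. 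This is the observation the proof should rest on. The ideal of $\overline{\Sigma}_{\overrightarrow{e'}}(C,f)$ is the ideal of $\overline{\Sigma}_{\overrightarrow{e}}(C,f)$ plus $\Fitt(\overrightarrow{e'},-(a+1))$. Hence $T_L\overline{\Sigma}_{\overrightarrow{e'}}(C,f)=T_L\overline{\Sigma}_{\overrightarrow{e}}(C,f)\cap(\im\mu_0)^{\perp}$, where $\mu_0$ is the Petri map of $L-(a+1)M$.

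Your reduction to a single ideal inside a smooth ambient scheme does not go through, for three reasons.
\begin{itemize}
\item $L\notin\overset{\circ}{\Sigma}_{\overrightarrow{e}[a+1]}(C,f)$, because $\overrightarrow{e'}$ has entries $a+1$ before position $i(\overrightarrow{e},a+1)=i+k'$.
\item When $i>1$ one has $E\leq a$, so $L\notin\overset{\circ}{\Sigma}_{\overrightarrow{e}[E]}(C,f)$. Then Proposition \ref{prop1} says nothing at $L$, and the Fitting ideals coming from the entries of $\overrightarrow{e}$ below $a$ cannot be discarded.
\item $h^0(L-(a+1)M)$ jumps by $l$ at $L$. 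So the $l$ pairs are exactly what cannot be absorbed into a locally free $N'$ near $L$, contrary to your claim. Relatedly, the image of $H^0(L-(a+1)M)\otimes H^0(M)$ has codimension $k'-2l$ in $H^0(L-aM)$, not $k'$.
\end{itemize}

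The decisive step is the one you defer to ``an explicit count analogous to Lemma \ref{lemma2}'': the inequality $\dim T_L\overline{\Sigma}_{\overrightarrow{e}}(C,f)\leq\dim T_L\overline{\Sigma}_{\overrightarrow{e'}}(C,f)+l^2$. Only ``at most'' is needed here, not ``exactly''. No identity among $u$-values can produce this, because it is a statement about Zariski tangent spaces at the single point $L$. What is missing is an inclusion of Petri images.

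Let $\mu_1,\mu_2$ be the Petri maps of $L-aM$ and $L-(a+2)M$. Since $h^0$ is exactly as prescribed at these two twists, $T_L\overline{\Sigma}_{\overrightarrow{e}}(C,f)\subset(\im\mu_1)^{\perp}\cap(\im\mu_2)^{\perp}$. With $s,t$ a basis of $H^0(C,M)$, put $V=s\,H^0(K-L+aM)+t\,H^0(K-L+aM)\subset H^0(K-L+(a+1)M)$ and $V'=s\,H^0(L-(a+2)M)+t\,H^0(L-(a+2)M)\subset H^0(L-(a+1)M)$. Then:
\begin{itemize}
\item $\mu_0(H^0(L-(a+1)M)\otimes V)\subset\im\mu_1$ and $\mu_0(V'\otimes H^0(K-L+(a+1)M))\subset\im\mu_2$;
\item $V$ and $V'$ each have codimension $l$. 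This follows from $s\,H^0(A)\cap t\,H^0(A)=st\,H^0(A-M)$ for any line bundle $A$ (base point freeness), together with Riemann--Roch and the jumps of $h^0(L+nM)$ dictated by $\overrightarrow{e'}$.
\end{itemize}
Choosing $l$-dimensional complements $W,W'$ gives $\im\mu_0\subset\im\mu_1+\im\mu_2+\mu_0(W'\otimes W)$, which is the needed inequality.

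You correctly note that smoothness of $\overline{\Sigma}_{\overrightarrow{e'}}(C,f)$ at $L$ (Proposition \ref{prop1.1} (5)) only controls the smaller scheme. Combined with the inequality above and $u(\overrightarrow{e'})-u(\overrightarrow{e})=l^2$, it yields $\dim T_L\overline{\Sigma}_{\overrightarrow{e}}(C,f)\leq g-u(\overrightarrow{e})$. This argument needs no local equations for $\overline{\Sigma}_{\overrightarrow{e}}(C,f)$ at $L$ at all.
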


\begin{proof}
One has $u(\overrightarrow{e'})-u(\overrightarrow {e})=l^2$ hence $\dim (\overline{\Sigma}_{\overrightarrow {e}}(C,f))- \dim (\Sigma _{\overrightarrow {e'}}(C,f))=l^2$.
Also $\Fitt(\overrightarrow {e'},n)=\Fitt (\overrightarrow {e},n)$ in case $n \neq -(a+1)$.
Taking $L \in \Sigma _{\overrightarrow {e'}}(C,f)$, it follows that
\[
T_L (\overline {\Sigma}_{\overrightarrow {e'}}(C,f))=T_L (\overline {\Sigma}_{\overrightarrow {e}}(C,f))\cap T_L(Z(\Fitt (\overrightarrow {e},-(a+1)))) \text { .}
\]

Consider the Petri map
\[
\mu _0 : H^0(C,L-(a+1)M) \otimes H^0(C,K-L+(a+1)M) \rightarrow H^0(C,K) 
\]
hence $T_L(Z(\Fitt (\overrightarrow {e},-(a+1))))=(\im \mu_0)^{\perp} \subset H^1(C, \mathcal{O}_C)$.
(We use that the identification of $T_L(\Pic^d(C))$ and $T_{L-(a+1)M}(\Pic ^{d-(a+1)k}(C))$ with $H^1(C,\mathcal{O}_C)$ are compatible with the tangent map of the translation $\Pic^d(C) \rightarrow \Pic^{d-(a+1)k}(C) : L \rightarrow L-(a+1)M$ (see \cite{ref2}, Remark 6).)
Consider the Petri map
\[
\mu _1 : H^0(C,L-aM) \otimes H^0(C,K-L+aM) \rightarrow H^0(C,K) \text { .}
\]
Let $s, t$ be a base for $H^0(C,M)$ and let $V= s \cdot H^0(C,K-L+aM)+t \cdot H^0(C,K-L+aM) \subset H^0(C,K-L+(a+1)M))$.
For $u \in H^0(C,L-(a+1)M)$, $x,y \in H^0(C,K-L+aM)$ one has 
\[
\mu _0 (u \otimes (sx+ty))= \mu _1 (su \otimes x + tu \otimes y)
\]
proving $\mu_0 (H^0(C,L-(a+1)M)\otimes V) \subset \im (\mu _1)$.
In case $z \in s\cdot H^0 (C,K-L+aM) \cap t \cdot H^0(C,K-L+aM)$ then there exist $x,y \in H^0(C,K-L+aM)$ with $z =sx = ty$.
Since $Z(s) \cap Z(t) = \emptyset$ there exists $z' \in H^0(C,K-L+(a-1)M)$ with $z=stz'$, hence 
\[
s \cdot H^0(C, K-L+aM) \cap t \cdot H^0(C,K-L+aM) = st \cdot H^0(C,K-L+(a-1)M) \text { .}
\]
This proves $\dim V=2h^0(K-L+aM)-h^0(K-L+(a-1)M)$.

Notice that $h^0(L-(a-1)M)-h^0(L-aM)=k-i+1$.
Therefore using Riemann-Roch we find $\dim V=h^0(L-aM)-(k-i+1)-d+g+(a+1)k-1$.
Also $h^0(L-aM)-h^0(L-(a+1)M)=k-i+1-l$ and again using Riemann-Roch we find
\[
\dim V=h^0(K-L+(a+1)M)-l \text { .}
\]

Next we consider the Petri map
\[
\mu_2 : H^0(C,L-(a+2)M) \otimes H^0(C, K-L+(a+2)M) \rightarrow H^0(C,K) \text { .}
\]
Let $V'=s \cdot H^0(C,L-(a+2)M) + t \cdot H^0(C,L-(a+2)M) \subset H^0(C,L-(a+1)M)$.
As before one finds $\mu _0 (V' \otimes H^0(C,K-L+(a+1)M)) \subset \im \mu _2$ and also
\[
s.H^0(C,L-(a+2)M) \cap t \cdot H^0(C,L-(a+2)M) = st \cdot H^0(C,L-(a+3)M) \text { .}
\]
This implies $\dim V'=2h^0(L-(a+2)M)-h^0(L-(a+3)M)$.

Now we use $h^0(L-(a+2)M)=h^0(L-(a+3)M)+k-(i-1+k')$, hence $\dim V' = h^0(L-(a+2)M)+k-(i-1+k')$.
Since $h^0(L-(a+1)M)-h^0(L-(a+2)M)=k-(i-1+k')+l$ we find
\[
\dim V' = h^0(L-(a+1)M)-l \text { .}
\]

Now fix subspaces $W$ (resp. $W'$) of dimension $l$ of $h^0(C,K-l+(a+1)M)$ (resp. $H^0(C,L-(a+1)M)$) such that $H^0(C,L-(a+1)M)=V' \oplus W'$ and $H^0(C,K-L+(a+1)M)=V \oplus W$.
We obtain $\im \mu_0 \subset \im \mu_1 + \im \mu_2 + \mu_0 (W' \otimes W)$.
This implies
\[
\dim (T_L(\overline{\Sigma} _{ \overrightarrow{e'}}(C,f))) \geq \dim (T_L (\overline{\Sigma}_{\overrightarrow {e}}(C,f)))-l^2 \text { .}
\]
We know from \cite{ref7} that $\overline{\Sigma}_{\overrightarrow {e'}}(C,f)$ is smooth at $L$, hence we obtain
\[
\dim (\overline{\Sigma}_{\overrightarrow {e'}}(C,f))+l^2 \geq \dim (T_L (\overline{\Sigma}_{\overrightarrow {e}}(C,f))) \text { .}
\]
Since $\dim (\overline{\Sigma}_{\overrightarrow {e'}}(C,f)+l^2 =\dim (\overline{\Sigma}_{\overrightarrow {e}}(C,f)$ this implies $\dim (T_L (\overline{\Sigma} _{\overrightarrow {e}}(C,f))=\dim \overline{\Sigma}_{\overrightarrow {e}}(C,f)$, i.e. $\overline {\Sigma }_{\overrightarrow {e}}(C,f)$ is smooth at $L$.
\end{proof}

It should be noted that more smoothness results of this kind can be proved using Petri maps in a similar way.

\begin{proposition}\label{propE1}
Let $Z$ be a component of type II of $W^r_d(C)$ on a general $k$-gonal curve $C$ and let $Z=\overline{\Sigma}_{\overrightarrow {e}}(C,f)$ with $\overrightarrow {e}$ of type $B(0,b,y,u,v)$ with $u>0$, $v>0$.
Let $L \in \overset {\circ}{\Sigma}_{\overrightarrow {e}}(C,f^)$ such that $\overrightarrow {e}(L)$ satisfies $e_{k-u-v} \leq -2$.
Then $L$ is a smooth point of the scheme $W^r_d(C)$.
\end{proposition}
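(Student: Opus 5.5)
The plan is to compare the Zariski tangent spaces at $L$ of the two schemes $W^r_d(C)$ and $\overline{\Sigma}_{\overrightarrow{e}}(C,f)=Z$, and to show that they coincide. Here $L$ is taken in $\overset{\circ}{\Sigma}_{\overrightarrow{e}[0]}(C,f)$, which is the set $V^r_{d,l}(C,f)$ of Remark \ref{remV}. By Theorem \ref{TheoC}, equivalently Corollary \ref{cor3.1} with $e=0$ (noting $\overrightarrow{e}[0]=\overrightarrow{e}$), the scheme $Z$ is smooth at $L$ of dimension $g-u(\overrightarrow{e})$. Since $Z\subset W^r_d(C)$ as closed subschemes, we have $\dim_L W^r_d(C)\geq g-u(\overrightarrow{e})$. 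It therefore suffices to prove $\dim T_L(W^r_d(C))\leq \dim T_L(Z)$, i.e. $T_L(W^r_d(C))=T_L(Z)$. Then $W^r_d(C)$ is smooth at $L$ (and in particular $L$ lies only on $Z$).

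\textbf{Step 1: describe $T_L(Z)$ by two Petri maps.} For $\overrightarrow{e}$ of type $B(0,b,y,u,v)$ with $v>0$ we have $E=\min\{e_i: e_i\geq e_1+2\}=0$, and the values $e_i\geq E$ are exactly $0$ and $1$. By Proposition \ref{prop1}, locally at $L$ the scheme $Z$ is cut out by $\Fitt(\overrightarrow{e},0)$ and $\Fitt(\overrightarrow{e},-1)$ alone. Hence $T_L(Z)=(\im\mu_L)^{\perp}\cap(\im\mu_{L-M})^{\perp}$. Here $\mu_{L-M}:H^0(L-M)\otimes H^0(K-L+M)\to H^0(K)$ is the Petri map of $L-M$, and we use the compatibility of the identifications with $H^1(C,\mathcal{O}_C)$ under translation, as in the proof of Proposition \ref{prop5.1}. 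Since $T_L(W^r_d(C))=(\im\mu_L)^{\perp}$, it suffices to show $\im\mu_{L-M}\subset\im\mu_L$.

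\textbf{Step 2: the inclusion via the base point free pencil trick.} Let $s,t$ be a basis of $H^0(M)$ and set $V=s\cdot H^0(K-L)+t\cdot H^0(K-L)\subset H^0(K-L+M)$. As in the proof of Proposition \ref{prop5.1}, for $w\in H^0(L-M)$ and $x,y\in H^0(K-L)$ we have
\[
\mu_{L-M}(w\otimes(sx+ty))=\mu_L(sw\otimes x+tw\otimes y),
\]
so $\mu_{L-M}(H^0(L-M)\otimes V)\subset\im\mu_L$. Moreover $s H^0(K-L)\cap tH^0(K-L)=st H^0(K-L-M)$, which gives $\dim V=2h^0(K-L)-h^0(K-L-M)$. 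Hence everything reduces to proving $V=H^0(K-L+M)$, i.e. to the vanishing of the second difference
\[
h^0(K-L+M)-2h^0(K-L)+h^0(K-L-M)=0 .
\]

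\textbf{Step 3: the numerical condition, which is the main point.} By duality, $f_*(K-L)$ has splitting type $(-e_k(L)-2,\dots,-e_1(L)-2)$, where $\overrightarrow{e}(L)$ is the splitting type of $L$. Therefore the second difference above equals the number of $j$ with $-e_j(L)-2=-1$, i.e. the number of $j$ with $e_j(L)=-1$. I will verify this count carefully with Riemann--Roch, since it is the step where an off-by-one error could occur. Since $L\in\overset{\circ}{\Sigma}_{\overrightarrow{e}[0]}(C,f)$, the entries $e_j(L)$ for $j>k-u-v$ equal $0$ or $1$ (agreeing with $\overrightarrow{e}$), and the entries for $j\leq k-u-v$ are negative. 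The hypothesis $e_{k-u-v}(L)\leq -2$ then forces every negative entry to be $\leq -2$. So no entry equals $-1$, the second difference vanishes, $V=H^0(K-L+M)$, and $\im\mu_{L-M}\subset\im\mu_L$. Consequently $T_L(W^r_d(C))=T_L(Z)$ has dimension $g-u(\overrightarrow{e})\leq\dim_L W^r_d(C)$, and $W^r_d(C)$ is smooth at $L$.
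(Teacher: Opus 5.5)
Your proposal is correct and follows essentially the same route as the paper. Proposition \ref{prop1} reduces $T_L(Z)$ to the Petri maps of $L$ and $L-M$, the base point free pencil trick gives $\im\mu_{L-M}\subset\im\mu_L$ once $s\cdot H^0(K-L)+t\cdot H^0(K-L)=H^0(K-L+M)$, and smoothness of $Z$ at $L$ (Corollary \ref{cor3.1}) then forces $\dim T_L(W^r_d(C))\leq\dim_L W^r_d(C)$. The only difference is cosmetic: you check the dimension count through the splitting type $(-e_k-2,\dots,-e_1-2)$ of $f_*(K-L)$, whose second difference counts the entries $e_j=-1$, whereas the paper does the equivalent Riemann--Roch computation $2h^0(K-L)-h^0(K-L-M)=v-d+k+g-1=h^0(K-L+M)$.
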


\begin{proof}
From Proposition \ref{prop1} we know locally at $L$ the scheme $\overline{\Sigma}_{\overrightarrow {e}}(C,f)$ is defined by $\Fitt (\overrightarrow {e},-1)+\Fitt (\overrightarrow {e},0)$.
Also from Corollary \ref{cor3.1} we know the scheme $\overline{\Sigma}_{\overrightarrow {e}}(C,f)$ is smooth at $L$.
The Zariski tangent space $T_L(Z(\Fitt(\overrightarrow {e},-1)))$ is equal to $(\im (\mu_1))^{\perp}$ with $\mu _1$ being the Petri map
\[
\mu_1 : H^0(C,L-M) \otimes H^0(C,K-L+M) \rightarrow H^0(C,K) \text { .}
\]
From Theorem A we know $\mu _1$ is injective.
Also the Zariski tangent space $T_L (Z(\Fitt (\overrightarrow {e},0)))$ is  equal to $(\im \mu_2)^{\perp}$ with $\mu_2$ being the Petri map
\[
\mu_2 : H^0(C,L) \otimes H^0(C,K-L) \rightarrow H^0(C,K) \text { .}
\]

Consider a base $\{ s,t \}$ for $H^0(C,M)$ and consider the subspace
\[
s \cdot H^0(C,K-L)+t \cdot H^0(C,K-L) \subset H^0(C,K-L+M) \text { .}
\]
Using arguments as before one finds
\[
s \cdot H^0(C,K-L)+t \cdot H^0(C,K-L)=st \cdot H^0(C,K-L-M)
\]
hence
\begin{multline*}
\dim (s \cdot H^0(C,K-L)+t \cdot H^0(C,K-L))=2h^0(K-L)-h^0(K-L-M) \\
=v-d+k+g-1=h^0(K-L+M)
\end{multline*}
(note that in the second equality we use $e_{k-u-v}\leq -2$).
So we obtain
\[
s \cdot H^0(C,K-L)+ t \cdot H^0(C,k-L) = H^0(C,K-L+M) \text { .}
\]

An element $z$ of $\im (\mu_1)$ is of type $z=\Sigma_{i=1}^s x_iy_i$ with $x_i \in H^0(C,L-M)$ and $y_i \in H^0(C,K-L+M)$ (for some integer $s \geq 1$).
But we found there exists $y'_i, y''_i \in H^0(C,K-L)$ such that $y_i=s\cdot y'_i+t\cdot y''_i$, hence $z = \Sigma _{i=1}^s ((sx_i)y'_i+(tx_i)y''_i) \in \im (\mu _2)$.
This shows $\im (\mu_1 ) \subseteqq \im (\mu_2)$ hence $T_L(\overline {\Sigma}_{\overrightarrow {e}}(C,f))=(\im (\mu _2))^{\perp}$.

Since $\overline{\Sigma}_{\overrightarrow {e}}(C,f)$ is smooth at $L$ it implies $\dim (\im (\mu_2))=g-\dim (\overline{\Sigma}_{\overrightarrow {e}}(C,f))$.
Since $\overline{\Sigma}_{\overrightarrow {e}}(C,f) \subset W^r_d(C)$ we have $\dim (\overline{\Sigma}_{\overrightarrow {e}}(C,f)) \leq \dim _L (W^r_d(C))$.
Since $(\im (\mu_2))^{\perp} = T_L(W^r_d(C))$ we obtain $g-\dim (T_L(W^r_d(C)) = g-\dim(\overline{\Sigma}_{\overrightarrow {e}}(C,f)) \geq g-\dim _L(W^r_d(C))$, hence $\dim (T_L (W^r_d(C))) \leq \dim _L (W^r_D(C))$.
This implies equality and therefore $W^r_d(C)$ is smooth at $L$.
\end{proof}

The following proposition enables us to use Proposition \ref{propE1} in order to obtain smoothness results for components of type III of $W^r_d(C)$ on a general $k$-gonal curve $C$.
We give a more general formulation than needed valid for all $k$-gonal curves.

\begin{proposition}\label{prop5.2}
Let $f :C \rightarrow \mathbb{P}^1$ be a covering of degree $k$ with $C$ a smooth curve of genus $g$.
Let $M$ be the invertible sheaf associated to $f$.
Let $L$ be a smooth point of the scheme $W^r_d(C)$ (we assume $r>d-g$).
Let $\overrightarrow {e}$ be the splitting locus of $L$ with respect to $f$.
Assume $e_i \geq -2$ implies $e_i \geq 0$ (i.e. -2 and -1 do not occur in $\overrightarrow {e}$.
Let $s+1 = h^0(L+M)$ then $W^s_{d+k}(C)$ is smooth at $L+M$.

Moreover in that case if $Z$ is the irreducible component of $W^r_d(C)$ containing $L$ then $Z+M$ is the irreducible component of $W^s_{d+k}(C)$ containing $L+M$.
\end{proposition}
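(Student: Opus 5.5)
The plan is to compare the Petri maps of $L$ and of $L+M$. I will show that, after the identification of both tangent spaces with $H^1(C,\mathcal{O}_C)$ (compatible with translation by $M$, as in the proof of Proposition \ref{prop5.1}), the two maps have the same image. Then $T_L(W^r_d(C))$ and $T_{L+M}(W^s_{d+k}(C))$ have the same dimension. Separately, I will show that $Z+M$ lies in $W^s_{d+k}(C)$ near $L+M$. Comparing dimensions then gives smoothness and identifies the component.

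\emph{Step 1: the images are equal.} Fix a base $s,t$ of $H^0(C,M)$. Write $f_*(L)=\mathcal{O}(\overrightarrow{e})$, so $f_*(L+M)=\mathcal{O}(\overrightarrow{e})\otimes\mathcal{O}_{\mathbb{P}^1}(1)$.
\begin{itemize}
\item Since no $e_i$ lies in $\{-2,-1\}$, each summand $\mathcal{O}(e_i)$ either has $e_i\ge 0$ or $e_i\le -3$. In the first case $H^0(\mathcal{O}(e_i))\otimes H^0(\mathcal{O}(1))\to H^0(\mathcal{O}(e_i+1))$ is surjective. In the second case both sides vanish. Hence $H^0(C,L+M)=s\cdot H^0(C,L)+t\cdot H^0(C,L)$.
\item By relative duality, $f_*(K-L)\cong (f_*L)^{\vee}\otimes\mathcal{O}_{\mathbb{P}^1}(-2)$, which has splitting type $(-e_i-2)$. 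The hypothesis says none of these equals $-1$ or $0$. The same argument applied to $K-L-M$ gives $H^0(C,K-L)=s\cdot H^0(C,K-L-M)+t\cdot H^0(C,K-L-M)$.
\end{itemize}
Now I use the identity $\mu(sx\otimes y)=\mu(x\otimes sy)$, exactly as in the proofs of Propositions \ref{prop5.1} and \ref{propE1}. The first surjectivity gives $\im\mu_{L+M}\subset\im\mu_L$. The second, applied to $\mu_L(x\otimes y)$ with $y=sy'+ty''$, gives $\im\mu_L\subset\im\mu_{L+M}$. Hence $\dim T_{L+M}(W^s_{d+k}(C))=\dim T_L(W^r_d(C))$.

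\emph{Step 2: $Z+M\subset W^s_{d+k}(C)$ near $L+M$.} By the base point free pencil trick (Lemma \ref{lemma1} with $m=1$) and the surjectivity from Step 1,
\[
s+1=2h^0(L)-h^0(L-M)=2(r+1)-h^0(L-M).
\]
Let $L'\in Z$ lie near $L$. Then $h^0(L')\ge r+1$, and by upper semicontinuity $h^0(L'-M)\le h^0(L-M)$. Lemma \ref{lemma1} then gives
\[
h^0(L'+M)\ge 2h^0(L')-h^0(L'-M)\ge s+1,
\]
so $L'+M\in W^s_{d+k}(C)$. In particular $\dim_{L+M}W^s_{d+k}(C)\ge\dim Z$.

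\emph{Step 3: conclusion.} Since $L$ is a smooth point of $W^r_d(C)$, we have $\dim Z=\dim_L W^r_d(C)=\dim T_L(W^r_d(C))$. Combining with Steps 1 and 2,
\[
\dim T_{L+M}(W^s_{d+k}(C))=\dim Z\le\dim_{L+M}W^s_{d+k}(C)\le\dim T_{L+M}(W^s_{d+k}(C)),
\]
so equality holds throughout and $W^s_{d+k}(C)$ is smooth at $L+M$. In particular $W^s_{d+k}(C)$ is irreducible at $L+M$, of dimension $\dim Z$. The closed irreducible set $Z+M$ has that same dimension and lies in it near $L+M$, so $Z+M$ is the irreducible component through $L+M$.

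The step I expect to need the most care is Step 1. The duality for $f_*(K-L)$ and the sign bookkeeping must be checked so that the hypothesis "$-2,-1\notin\overrightarrow{e}$" is exactly what makes both multiplication maps surjective. I also need the translation compatibility of the tangent-space identifications. Step 2 is more routine, but it needs the semicontinuity bound to hold on a neighbourhood of $L$ in $Z$, not only at general points.
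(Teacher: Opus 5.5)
Your proposal is correct and takes essentially the paper's route. Both arguments hinge on $H^0(C,K-L)=s\cdot H^0(C,K-L-M)+t\cdot H^0(C,K-L-M)$ to get $\im\mu_L\subset\im\mu_{L+M}$; the paper obtains that equality by the Riemann--Roch dimension count $2h^0(K-L-M)-h^0(K-L-2M)=h^0(K-L)$ rather than from the splitting of $f_*(K-L)$, and your reverse inclusion is not needed. Your Step 2 usefully proves $Z+M\subset W^s_{d+k}(C)$, which the paper only asserts, but add the one-line reason that smoothness at $L$ together with $r>d-g$ forces $h^0(L)=r+1$, since both Step 2 and the tangent-space formula at $L$ use it.
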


\begin{proof}
Since $W^r_d(C)$ is smooth at $L$ it follows $h^0(L)=r+1$.
Let $a = \# \{ i : e_i \geq 0 \}$.
From the assumpions that -1 and -2 do not occur in $\overrightarrow {e}$ it follows that $h^0(L+M)=r+1+a$ (hence $s=r+a$) and $h^0(M+2M)=r+1+2a$.
Consider the Petri map
\[
\mu_1 : H^0(C,L) \otimes H^0(C,K-L) \rightarrow H^0(C,K)
\]
then $W^r_d(C)$ being smooth at $L$ is equivalent to $\dim (Z)=g- \dim (\im (\mu_1))$.

Consider the Petri map
\[
\mu_2 : H^0(C,L+M) \otimes H^0(C,K-L-M) \rightarrow H^0(C,K)\text { .}
\]
We know $\dim (T_L(W^s_{d+k}(C))=g-\dim (\im (\mu_2))$.
Since $Z+M \subset W^s_{d+k}(C)$ it is enough to prove $g-\dim (\im (\mu_2)) \leq \dim (Z)$.

As before, let $\{ s,t \}$ be a base for $H^0(C,M)$ and consider 
\[
V=s\cdot H^0(C,K-L-M)+t\cdot H^0(C,K-L-M) \subset H^0(C,K-L) \text { .}
\]
For $x \in H^0(C,L)$ and $v=sy_1+ty_2$ with $y_1, y_2 \in H^0(C,K-L-M)$ we have $\mu_1 (x \otimes v)=\mu_2 (xs \otimes y_1 + xt \otimes y_2)$, hence $\mu_1 (H^0(C,L) \otimes V) \subset \im (\mu_2)$.

Repeating arguments from before, we know $V=st\cdot H^0(C,K-L-2M)$ and therefore $\dim (V)=2h^0(K-L-M)-h^0(K-L-2M)$.
Using the Riemann-Roch Theorem this implies $\dim (V)=(r+1)-d+g-1=h^0(K-L)$, hence $V=H^0(C,K-L)$.
So we found $\im (\mu_1) \subset \im (\mu_2)$ and therefore $g-\dim(\im (\mu_2))\leq g-\dim (\im (\mu_1))=\dim (Z)$.
\end{proof}

\begin{remark}\label{rem5.1}
In case some $e_i=-1$ occurs in $\overrightarrow {e}$ in the situation of Proposition \ref{prop5.2}, then we have $h^0(L+M)>r+1+a$ and therefore $L+M$ is a singular point of $W^{r+a}_{d+k}(C)$.
Therefore we do not consider this case.
The next proposition considers the case where no $e_i$ is equal to -1 but some $e_i$ is equal to -2.
\end{remark}

\begin{proposition}\label{prop5.3}
Let $C$, $f$, $L$, $Z$ and $\overrightarrow {e}$ be as in Proposition \ref{prop5.2}.
Assume $e_i \geq -1$ implies $e_i \geq 0$ and there exists some $e_i=-2$.
Then at least one of the two possibilies hold
\begin{itemize}
\item $Z+M \subset W^s_{d+k}(C)$ is not an irreducible component of $W^s_{d+k}(C)$.
\item $L+M$ is contained in two irreducible components of $W^s_{d+k}(C)$.
\end{itemize}
\end{proposition}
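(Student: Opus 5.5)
The plan is to compare $Z+M$ with a Brill--Noether degeneracy locus through $L+M$ that cannot lie inside $Z+M$. That locus is obtained by balancing the negative part of the splitting sequence of $L+M$.

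\textbf{Setup.} Put $a=\#\{i : e_i\geq 0\}$ and $b=\#\{i: e_i=-2\}\geq 1$. The splitting sequence of $L+M$ is $\overrightarrow{e}+1=(e_1+1,\dots,e_k+1)$. Since $-1$ does not occur in $\overrightarrow{e}$, the entries equal to $-2$ become $-1$ and contribute nothing to $h^0$. Hence $h^0(L+M)=r+1+a$, so $s=r+a$.

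Since $r>d-g$ and $L$ is smooth on $W^r_d(C)$, we have $h^0(L)=r+1\geq 1$, so $a\geq 1$. For the same reason $Z\neq \Pic^d(C)$, because a general $L'\in\Pic^d(C)$ has $h^0(L')=\max\{0,d-g+1\}<r+1$.

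\textbf{Case 1: every negative entry of $\overrightarrow{e}$ equals $-2$.} Then all negative entries of $\overrightarrow{e}+1$ equal $-1$, so $h^1(L+M)=0$. By Riemann--Roch every $N\in\Pic^{d+k}(C)$ has $h^0(N)\geq d+k-g+1=s+1$, hence $W^s_{d+k}(C)=\Pic^{d+k}(C)$. Since $Z\neq\Pic^d(C)$, the proper closed subset $Z+M$ is not an irreducible component, and the first alternative holds.

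\textbf{Case 2: some entry satisfies $e_i\leq -3$.} Let $\overrightarrow{f}$ be the splitting sequence with $f_i=e_i+1$ for all $i$ with $e_i\geq 0$, whose negative part is the balanced sequence with the same sum as the negative part of $\overrightarrow{e}+1$. In the notation of Notation~\ref{not2.1} this is $(\overrightarrow{e}+1)[0]$.

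1. Since the negative part of $\overrightarrow{e}+1$ contains both $-1$ and an entry $\leq -2$, we get $\overrightarrow{e}+1<\overrightarrow{f}$.
2. Therefore $L+M\in\overline{\Sigma}_{\overrightarrow{f}}(C,f)$.
3. We have $u(\overrightarrow{f})<u(\overrightarrow{e}+1)=u(\overrightarrow{e})\leq g$, so $\Sigma_{\overrightarrow{f}}(C,f)$ is non-empty and dense in $\overline{\Sigma}_{\overrightarrow{f}}(C,f)$ by Proposition~\ref{prop1.1}.
4. For $N\in\Sigma_{\overrightarrow{f}}(C,f)$ the nonnegative entries are those of $\overrightarrow{e}+1$, and all other entries are $\leq -1$. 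So $h^0(N)=s+1$ and $\overline{\Sigma}_{\overrightarrow{f}}(C,f)\subset W^s_{d+k}(C)$.

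\textbf{Key step.} We show $\overline{\Sigma}_{\overrightarrow{f}}(C,f)\not\subset Z+M$. Every $N\in Z+M$ satisfies $h^0(N-M)\geq r+1$. For general $N\in\Sigma_{\overrightarrow{f}}(C,f)$, the sheaf $N-M$ has splitting sequence $\overrightarrow{f}-1$. Its nonnegative entries are exactly the $e_i\geq 0$ reduced by one, so
\[
h^0(N-M)=\sum_{e_i\geq 0} e_i=(r+1)-a<r+1 .
\]
Hence a general such $N$ is not in $Z+M$.

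\textbf{Conclusion of Case 2.} Suppose $Z+M$ is an irreducible component of $W^s_{d+k}(C)$. The irreducible set $\overline{\Sigma}_{\overrightarrow{f}}(C,f)$ (Proposition~\ref{prop1.1}(3)) lies in some irreducible component $Z''$ of $W^s_{d+k}(C)$. By the key step, $Z''\neq Z+M$. Both components contain $L+M$, so the second alternative holds.

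The main point needing care is the bookkeeping in the key step and in step 4. One must check that the balanced negative part of $\overrightarrow{f}$ really consists of entries $\leq -1$, so that no extra sections of $N$ or $N-M$ appear. This holds because the negative part of $\overrightarrow{e}+1$ has all entries $\leq -1$, so its average is $\leq -1$, and balancing keeps every entry $\leq -1$. Everything else follows directly from Proposition~\ref{prop1.1}.
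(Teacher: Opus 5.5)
Your Case 1 and your concluding ``two components'' argument are fine. The key step of Case 2, however, is false, and the choice $\overrightarrow{f}=(\overrightarrow{e}+1)[0]$ cannot be repaired.

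The nonnegative entries of $\overrightarrow{f}-1$ are $f_i-1=(e_i+1)-1=e_i$ for the indices with $e_i\geq 0$, not $e_i-1$. The balanced negative part minus one stays $\leq -2$. Hence every $N\in\Sigma_{\overrightarrow{f}}(C,f)$ has $h^0(N-M)=\sum_{e_i\geq 0}(e_i+1)=r+1$, i.e.\ $N-M\in W^r_d(C)$.

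Structurally, since $-1$ does not occur in $\overrightarrow{e}$, one has $\overrightarrow{f}-1=\overrightarrow{e}[0]$. So $\overline{\Sigma}_{\overrightarrow{f}}(C,f)=\overline{\Sigma}_{\overrightarrow{e}[0]}(C,f)+M$, and $\overline{\Sigma}_{\overrightarrow{e}[0]}(C,f)$ is a closed subset of $W^r_d(C)$ containing $L$. Because $L$ is a smooth point of $W^r_d(C)$, it lies only on $Z$, so every irreducible piece of this locus through $L$ lies in $Z$. Balancing only the negative part therefore never leaves $Z+M$.

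Step 1 is also wrong as stated. If the negative entries of $\overrightarrow{e}$ all lie in $\{-3,-2\}$, the negative part of $\overrightarrow{e}+1$ has entries in $\{-2,-1\}$. That is already balanced, so $\overrightarrow{f}=\overrightarrow{e}+1$.

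The missing idea is to modify the nonnegative part as well, so that a section is lost for $N-M$ but recovered for $N$. This is where the entry $-2$ is really used. The paper takes $i_0=\max\{i: e_i=-2\}$ and $j_0=\min\{i: e_i=e_k\}$, and defines $\overrightarrow{e'}$ by $e'_{i_0}=-1$, $e'_{j_0}=e_k-1$ and $e'_i=e_i$ otherwise.

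Then $\overrightarrow{e}<\overrightarrow{e'}$, so $L+M\in\overline{\Sigma}_{\overrightarrow{e'}+1}(C,f)$. For $N\in\Sigma_{\overrightarrow{e'}+1}(C,f)$, the new entry $e'_{i_0}+1=0$ compensates the section lost at $j_0$, so $h^0(N)=s+1$. On the other hand $h^0(N-M)=r$: for $N-M$ the entry $e'_{i_0}=-1$ contributes nothing, while the lowered entry still loses one section.

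Thus $\overline{\Sigma}_{\overrightarrow{e'}+1}(C,f)\subset W^s_{d+k}(C)$ but $\overline{\Sigma}_{\overrightarrow{e'}+1}(C,f)\not\subset Z+M$. Your final argument then applies verbatim. It even covers your Case 1, so no case distinction is needed.
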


\begin{proof}
Let $i_0 = \max \{i: e_i=-2 \}$ (hence $e_{i_0+1} \geq 0$) and $j_0= \min \{ i : e_i=e_k \}$.
Consider the splitting sequence $\overrightarrow {e'}$ defined by $e'_i=e_i$ in case $i \notin \{ i_0,j_0 \}$ while $e'_{i_0}=-1$ and $e'_{j_0} = e_k-1$.

Since $\Sigma _{\overrightarrow {e'}}(C,f) \not\subset W^r_d(C)$ we have $\overline{\Sigma}_{\overrightarrow {e'}+1}(C,f) \not\subset Z+M$.
On the other hand $L+M \in \overline{\Sigma}_{\overrightarrow {e'}+1}(C,f) \subset W^s_{d+k}(C)$ and $L+M \in Z+M$.
Assume $Z+M$ is an irreducible component of $W^s_{d+k}(C)$.
Then we obtain the existence of an irreducible component $Z'$ of $W^s_{d+k}(C)$ different from $Z+M$ and containing $\overline{\Sigma}_{\overrightarrow {e'}+1}(C,f)$.
We obtain in that case that $L+M$ is contained in two irreducible components of $W^s_{d+k}(C)$.
\end{proof}

Theorem B from the introduction is the equivalence of (1) and (2) in the next theorem.
\begin{theorem}\label{theorem5.1}
Let $C$ be a general $k$-gonal curve of genus $g$.
Let $V^r_{d,l}(C,f) \subset W^r_d(C)$ be such that its closure is an irreducible component $Z$ of $W^r_d(C)$ (see Remark \ref{remV} and Remark \ref{types}).
For $L \in V^r_{d,l}(C,f)$ the following statements are equivalent
\begin{enumerate}
\item As a scheme $W^r_d(C)$ is not smooth at $L$.
\item $W^r_d(C)$ contains an irreducible component $Z'$ different from $Z$ with $L \in Z'$.
\item $h^0(L+M)>2r+2-l$.
\end{enumerate}
\end{theorem}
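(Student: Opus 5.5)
The plan is to prove the cycle (2)$\Rightarrow$(1)$\Rightarrow$(3)$\Rightarrow$(2). Throughout I use the notation of Remark \ref{remV}: $Z=\overline{\Sigma}_{\overrightarrow{e}}(C,f)$ with $\overrightarrow{e}$ of type $B(a,b,y,m_1,m_2)$, so that
\[
l=am_1+(a+1)m_2, \qquad r+1=(a+1)m_1+(a+2)m_2 .
\]
For $L\in V^r_{d,l}(C,f)$, let $\overrightarrow{e'}$ be the splitting sequence of $L$.

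\textbf{Reformulating (3).} Since $L\in\overset{\circ}{\Sigma}_{\overrightarrow{e}[0]}(C,f)$, we have $e'_j=e_j\in\{a,a+1\}$ for the last $m_1+m_2$ indices, and all other $e'_j$ are negative. Hence
\[
h^0(L+M)=\sum_{e'_j\geq -1}(e'_j+2)=(r+1)+(m_1+m_2)+\#\{j:e'_j=-1\}.
\]
One checks directly that $2r+2-l=(r+1)+m_1+m_2$. So (3) is equivalent to the condition that $-1$ occurs in $\overrightarrow{e'}$. This reformulation drives both nontrivial implications.

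\textbf{(2)$\Rightarrow$(1).} This is formal. If $W^r_d(C)$ were smooth at $L$, its local ring at $L$ would be regular, hence a domain. Then $L$ would lie on a unique irreducible component.

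\textbf{(3)$\Rightarrow$(2).} Suppose some $e'_{j_0}=-1$. Build $\overrightarrow{e''}$ from $\overrightarrow{e'}$ in the spirit of Propositions \ref{prop4} and \ref{prop5.3}:
\begin{itemize}
\item replace that $-1$ by $0$;
\item lower one entry equal to $a+1$ to $a$ (or, if $m_2=0$, one entry $a$ to $a-1$), so the degree is preserved;
\item complete the negative part in a balanced way to a balanced plus balanced $\overrightarrow{e'''}\geq\overrightarrow{e'}$.
\end{itemize}
The count $\sum_{e''_j\geq0}(e''_j+1)$ is unchanged, so $\overline{\Sigma}_{\overrightarrow{e'''}}(C,f)\subset W^r_d(C)$. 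Moreover $L\in\Sigma_{\overrightarrow{e'}}(C,f)\subset\overline{\Sigma}_{\overrightarrow{e'''}}(C,f)$. A general point of $\overline{\Sigma}_{\overrightarrow{e'''}}(C,f)$ has $m_1+m_2+1$ nonnegative entries, while every point of $Z$ has at most $m_1+m_2$ (the partial sums of the splitting sequence can only decrease on $Z$). Hence $\overline{\Sigma}_{\overrightarrow{e'''}}(C,f)\not\subset Z$. Any irreducible component of $W^r_d(C)$ containing $\overline{\Sigma}_{\overrightarrow{e'''}}(C,f)$ is then a $Z'\neq Z$ through $L$. The only care needed is the bookkeeping when $a=0$ or $m_2=0$.

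\textbf{(1)$\Rightarrow$(3), by contrapositive.} Assume no $e'_j=-1$. By Proposition \ref{prop1}, near $L$ the scheme $Z$ is cut out by the Fitting ideals $\Fitt(\overrightarrow{e},n)$ for $n\in\{0,-a,-(a+1)\}$. By Theorem \ref{TheoC}, $Z$ is smooth at $L$. Its tangent space is therefore
\[
T_L(Z)=\bigl(\im\mu_0+\im\mu_{-a}+\im\mu_{-(a+1)}\bigr)^{\perp},
\]
where $\mu_{-j}$ is the Petri map of $L-jM$. It suffices to show $\im\mu_{-a}+\im\mu_{-(a+1)}\subset\im\mu_0$. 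Then $T_L(W^r_d(C))=(\im\mu_0)^\perp=T_L(Z)$ has dimension $\dim Z\leq\dim_L W^r_d(C)$, which forces smoothness.

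For the inclusion I chain the base point free pencil trick as in Propositions \ref{prop5.1}, \ref{propE1} and \ref{prop5.2}, passing from $\mu_{-j}$ to $\mu_{-(j-1)}$ one step at a time. The step is available in two ways:
\begin{itemize}
\item \emph{$L$-side:} if $s\cdot H^0(L-jM)+t\cdot H^0(L-jM)=H^0(L-(j-1)M)$. By the intersection formula $st\cdot H^0(\cdot)$ this holds iff the second difference of $n\mapsto h^0(L+nM)$ at $n=-j$, which equals $\#\{i:e'_i=j-1\}$, vanishes.
\item \emph{$K$-side:} if the analogous statement holds on $K-L$, i.e. $s\cdot H^0(K-L+(j-1)M)+t\cdot H^0(K-L+(j-1)M)=H^0(K-L+jM)$. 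By Riemann--Roch this amounts to $\#\{i:e'_i=-j-1\}=0$.
\end{itemize}
For the steps $j=2,\dots,a$ I use the $L$-side, since no $e'_i$ lies in $\{1,\dots,a-1\}$. For the step $j=1$ I use the $K$-side, which needs exactly the hypothesis that no $e'_i=-1$.

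The main obstacle is the last step, from $\mu_{-(a+1)}$ to $\mu_{-a}$. The $L$-side spanning fails there because the entries equal to $a$ occur. I would handle it by the decomposition argument of Proposition \ref{prop5.1}: split off the complements $V'\oplus W'$ and $V\oplus W$ and use the $K$-side for the second factor. If a leftover term $\mu_{-(a+1)}(W'\otimes W)$ remains, I would check that it is already absorbed by $\im\mu_{-a}$, which by the earlier steps lies in $\im\mu_0$.
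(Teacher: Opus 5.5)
Your reformulation of (3) as ``some entry of $\overrightarrow{e'}$ equals $-1$'' is correct, and so is (2)$\Rightarrow$(1). The reduction in (1)$\Rightarrow$(3) is also the right one: by Proposition \ref{prop1} and Theorem \ref{TheoC} it suffices to show $\im\mu_{-(a+1)}\subset\im\mu_{-a}\subset\cdots\subset\im\mu_0$. This matches the paper, which gets smoothness at $L-aM$ from Theorem \ref{TheoA} and Proposition \ref{propE1} and then climbs to $L$ with Proposition \ref{prop5.2}.

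\textbf{The chain in (1)$\Rightarrow$(3) does not go through.}

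\emph{Wrong direction on the $L$-side.} The identity $H^0(L-(j-1)M)=s\,H^0(L-jM)+t\,H^0(L-jM)$ gives $\im\mu_{-(j-1)}\subset\im\mu_{-j}$. Indeed, write $x=sx_1+tx_2$; then $xy=x_1(sy)+x_2(ty)$. That is the reverse of what you need, so your steps $j=2,\dots,a$ are unproved.

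\emph{Wrong criterion on the $K$-side.} Only the $K$-side identity pushes images towards $\mu_0$, and its criterion is not what you state. Since $f_*(K-L+(j-1)M)\cong\bigoplus_i\mathcal{O}_{\mathbb{P}^1}(j-3-e'_i)$, the identity $s\,H^0(K-L+(j-1)M)+t\,H^0(K-L+(j-1)M)=H^0(K-L+jM)$ fails exactly when some $e'_i=j-2$, not $-j-1$. Your own $j=1$ step, which needs no $-1$, already contradicts the formula you wrote.

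\emph{The fix.} With the correct criterion, the $K$-side works at every step $j=1,\dots,a+1$ for $L\in V^r_{d,l}(C,f)$ with no entry $-1$. The other entries are then either $\leq -2$ or at least $a$, so none lies in $\{-1,0,\dots,a-1\}$. In particular the step $j=a+1$ only needs no $e'_i=a-1$, so your ``main obstacle'' is spurious. The detour through Proposition \ref{prop5.1} that you sketch leaves $\mu_{-(a+1)}(W'\otimes W)$ unaccounted for, and is not needed.

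\textbf{The direction (3)$\Rightarrow$(2).}

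\emph{Where the construction works.} Raising the $-1$ to $0$ and lowering a top entry is the construction behind the paper's Proposition \ref{prop5.3} applied to $L-M$. It works when $a\geq1$ or $m_2\geq1$.

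\emph{A false justification to replace.} You claim every point of $Z$ has at most $m_1+m_2$ nonnegative entries. This is false: in Example \ref{lastEx}, $(-5,-4,-4,0,1,2)\leq(-4,-3,-3,-3,1,2)$. Argue instead that a general $L''$ of type $\overrightarrow{e'''}$ has $h^0(L''-(a+1)M)=m_2-1$, or $h^0(L''-aM)=m_1-1$ if $m_2=0$. Either violates an inequality defining $Z$.

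\emph{The real gap: type I.} When $a=m_2=0$, lowering a $0$ to $-1$ just undoes the first move. Nothing can repair this, because the equivalence is false there. Take $k=4$ and $\overrightarrow{e}=(-3,-3,0,0)$, so $r=1$, $l=0$, $d=g-3$. Let $L\in\Sigma_{(-5,-1,0,0)}(C,f)$, which is non-empty for $g\geq 11$ since $u=11$.
\begin{itemize}
\item $L\in V^1_{d,0}(C,f)$ and $h^0(L+M)=5>4=2r+2-l$, so (3) holds.
\item But $h^0(L-M)=0$ and $h^0(L)=2$, so $W^1_d(C)$ is smooth at $L$ by Proposition \ref{prop3}; hence (1) and (2) fail.
\end{itemize}
The paper's own argument for (3)$\Rightarrow$(2) tacitly needs $a\geq1$ as well: it applies Proposition \ref{prop5.3} to $L-M$, whose splitting sequence contains $-1$ when $a=0$.

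So the statement must exclude components of type I. With that restriction and the corrected $K$-side chain, your argument covers types II and III.
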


\begin{proof}
We use the notations $e$, $m_1$ and $m_2$ as in Remark \ref{remV}.
By definition $L'=L-eM$ belongs to an irreducible component $Z'$ of type I or II of $W^{2m_1+m_2-1}_{d-ek}(C)$ and $Z=Z'+eM$ (see Remark \ref{types}).
We write $\overrightarrow {e'}$ to denote the splitting type of $L'$, then the splitting type of $L$ is $\overrightarrow {e}=\overrightarrow {e'}+e$.
From Theorem A and Proposition \ref{propE1} it follows the scheme $W^{2m_1+m_2-1}_{d-ek}(C)$ is smooth at $L'$ (the condition $e_{k-u-v} \leq -2$ holds in case $v\geq 1$ because we assume $Z$ is an irreducible component of $W^r_d(C)$).
From Proposition \ref{prop5.2} it follows $W^r_d(C)$ is smooth at $L$ in case $e'_i \geq -1-e$ implies $e'_i\geq 0$ (one needs to perform an induction argument).
From the assumptions we know $e_i \geq 0$ implies $e_i \geq e$, hence $e'_i \geq -e$ implies $e'_i \geq 0$.
Therefore in case $W^r_d(C)$ is not smooth at $L$ then there is some $e'_i=-1-e$ hence $e_i=-1$.
This is equivalent to condition (3) hence we proved (1) $\Rightarrow$ (3).

But in case $e_i=-1$ it follows from Proposition \ref{prop5.3} applied to $L+(e-1)M \in Z'+(e-1)M$ that either $Z'+eM=Z$ is not an irreducible component of $W^r_d(C)$ (by assumption this is not the case) or $L$ is contained in at least 2 irreducible components of $W^r_d(C)$.
This proves the implication (3) $\Rightarrow$ (2).

Of course the implication (2) $\Rightarrow$ (1) is trivial.
\end{proof}

Finally we give an example illustrating Theorem \ref{theorem5.1}.
\begin{example}\label{lastEx}
Assume $C$ is a general 6-gonal curve of genus 50 and consider the irreducible component $Z$ of $W^4_{45}(C)$ corresponding to the splitting sequence $\overline{e} = (-4,-3,-3,-3,1,2)$ (hence $\dim (Z)=20$).
Using the notations of Remark \ref{remV} we know $Z$ is the closure of $V^4_{45,3}(C,f)$ (as usual, we write $f$ to denote a covering $f : C \rightarrow \mathbb{P}^1$ of degree 6).

Note that $Z = \{ L \in \Pic^{45}(C) : h^0(L) \geq 5, h^0(L-M) \geq 3 \text { and } h^0(L-2M) \geq 1 \}$ while $V^4_{45,3}(C,f) = \{ L \in \Pic ^{45}(C): h^0(L)=5, h^0(L-M)=3, h^0(L-2M)=1 \text { and } h^0(L-3M)=0 \}$.
Theorem \ref{TheoC} implies $Z$ is smooth along $V^4_{45,3} (C,f)$.
For a general element $L$ of $Z$ we also have $h^0(L+M)=7$.
Now Theorem \ref{theorem5.1} implies $L \in V^4_{45,3}(C,f)$ is a smooth point of $W^4_{45}(C)$ if and only if $h^0(L+M)=7$ and in case $h^0(L+M) >7$ then $L$ belongs to at least 2 irreducible components of $W^4_{45}(C)$.
As a matter of fact, in case $h^0(L+M)>7$ then $L$ also belongs to the component $\overline{\Sigma}_{\overrightarrow{g}}(C,f)$ with $\overrightarrow {g}=(-4,-4,-4,0,1,1)$ and having dimension 17.

Letting $\overrightarrow {e'}=(-4,-4,-4,-1,1,2)$ then if follows $\overline{\Sigma}_{\overrightarrow {e'}}(C,f) \cap V^4_{45,3}(C,f)$ (this is an open dense subset of $\overline{\Sigma}_{\overrightarrow {e'}}(C,f)$) is equal to $\Sing (W^4_{45}(C)) \cap V^4_{45}(C,f)$ and this has dimension 14.
On the other hand, letting $\overrightarrow {e''}=(-4,-4,-3,-2,1,2)$ then $\overset{\circ}{\Sigma}_{\overrightarrow {e''}[0]}(C,f)\subset V^4_{45,3}(C,f)$ and for $L \in \overset{\circ}{\Sigma}_{\overrightarrow {e''}[0]}(C,f)$ we have $h^0(L+M)=7$ hence $W^4_{45}(C)$ is smooth at $L$.
On the other hand, for a general element $L$ of $Z$ we have $h^0(L+2M)=9$ while $h^0(L+2M)>9$ for $L \in \overset{\circ}{\Sigma}_{\overrightarrow {e''}[0]}(C,f)$.
Notice that $\dim (\overset{\circ}{\Sigma}_{\overrightarrow{e''}[0]}(C,f))=18$.
\end{example}

\textbf {Acknowledgement.} I want to thank the authors of \cite{FFR} for answering some questions on the their paper.

\end{document}